\newtheorem{theorem}{Theorem}
\newtheorem{proposition}[theorem]{Proposition}
\newtheorem{prop}[theorem]{Proposition}
\newtheorem{lemma}[theorem]{Lemma}
\newtheorem{conjecture}[theorem]{Conjecture}
\newtheorem{example}[theorem]{Example}
\newtheorem{conj}{Conjecture}
\newtheorem{corollary}[theorem]{Corollary}
\newcommand{\oz}{\ensuremath{\overline{z}}}
\newcommand{\bz}{\ensuremath{\mathbf{z}}}
\def\cT{\mathcal{T}}
\def\cS{\mathcal{S}}
\def\cD{\mathcal{D}}
\DeclareMathOperator{\futne}{{ne_\leftarrow}}
\DeclareMathOperator{\enhne}{{ne_\downarrow}}
\newcommand{\yt}[1]{
\ensuremath{\begin{array}{c}
\begin{ytableau}#1\end{ytableau}
\end{array}}
}
\tikzset{partition/.style={fill,circle,inner sep=1pt},
         part/.style={baseline=0,scale=0.5,bend left=45},
         partlabel/.style={below}}
\tikzstyle{pnt}=[draw,ellipse,fill,inner sep=1pt]
\tikzstyle{opnt}=[ ]
\tikzstyle{pntt}=[draw,ellipse,fill,inner sep=0.5pt]
\tikzstyle{point}=[draw,ellipse,fill,inner sep=2pt]
\newcommand{\oset}[2]{%
  {\mathop{#2}\limits^{\vbox to \ex@{\kern-\tw@\ex@
   \hbox{\ensuremath{#1}}\vss}}}}
\author{Sophie Burrill
\and Julien Courtiel
\and Eric Fusy
\and Stephen Melczer
\and Marni Mishna
}
\address{
\noindent S. Burrill, Department of Mathematics, Simon Fraser University,
Burnaby BC Canada. \newline
J. Courtiel, Department of Mathematics, Simon Fraser University,
Burnaby BC Canada. \newline
E. Fusy, LIX, \'Ecole Polytechnique, Palaiseau, France. \newline
S. Melczer, Cheriton School of Computer Science, University of Waterloo, Waterloo
ON Canada. 
\&  U. Lyon, CNRS, ENS de Lyon, Inria, UCBL, Laboratoire LIP\newline
M. Mishna, Department of Mathematics, Simon Fraser University,
Burnaby BC Canada. }
\title{Tableau sequences, open diagrams, and Baxter families }
\keywords{Young tableaux, nonnesting partitions, matchings, Baxter
  permutations, bijections, oscillating tableaux}
\begin{document}

\begin{abstract}
  Walks on Young's lattice of integer partitions encode many objects
  of algebraic and combinatorial interest. Chen \emph{et al.\/}
  established connections between such walks and arc diagrams. We show
  that walks that start at $\varnothing$, end at a row shape, and only
  visit partitions of bounded height are in bijection with a new type
  of arc diagram -- open diagrams. Remarkably two subclasses of open
  diagrams are equinumerous with well known objects: standard Young
  tableaux of bounded height, and Baxter permutations. We give an
  explicit combinatorial bijection in the former case.
\end{abstract}

\maketitle


\section{Introduction}
\label{sec:Introduction}
The lattice of partition diagrams, where domination is given by
inclusion of Ferrers diagrams, is known as Young's lattice. Walks on
this lattice are of significant importance since they encode many
objects of combinatorial and algebraic interest. 

At a first level, a walk on Young's lattice is a sequence of Ferrers diagrams such that at most a
box is added or deleted at each step. A class of such sequences is also
known as a \textit{tableau family}.  There are several combinatorial classes in
explicit bijection with tableau families ending in an empty shape, in
particular when there are restrictions on the height of the tableaux
which appear.

In this work we study tableau families that encode walks in Young's
lattice that start at the empty partition and end with a partition
composed of a single part:~$\lambda=(m)$, $m\geq 0$.  Additionally, they are
bounded, meaning that they only visit partitions that have at most $k$
parts, for some fixed~$k$. In particular, we generalize the results of
Chen~\emph{et al.}~\cite{Chetal07}, and Bousquet-M\'elou and
Xin~\cite{BoXi05} to prove that two classic combinatorial classes
--Young tableaux of bounded height and Baxter permutations -- are in
bijection with bounded height tableau families.

\subsection{Part 1. Oscillating tableaux and Young tableaux of bounded height}
The first tableau family that we study is the set of \emph{oscillating
  tableaux\/} with height bounded by~$k$. These appear in the study of
partitions avoiding certain nesting and crossing
patterns~\cite{Chetal07}. Our first main result is a bijection
connecting oscillating tableaux to the class of standard Young tableau
of bounded height. Young tableaux are more commonly associated with
oscillating tableau, and ours is a very different connection.  This
result demonstrates a new facet of the ubiquity of Young tableaux.

\begin{theorem}
\label{thm:SYT}
The set of oscillating tableaux of size~$n$ with height bounded
by~$k$, which start at the empty partition and end in a row shape
$\lambda=(m)$, is in bijection with the set of standard Young tableaux
of size $n$ with height bounded by $2k$, with $m$ odd columns.
\end{theorem}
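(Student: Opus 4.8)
The plan is to construct an explicit bijection $\Phi$ and verify that it preserves the relevant statistics, building $\Phi$ by processing the oscillating tableau one step at a time.

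First I would reformulate both sides as lattice walks in the set of partitions. An oscillating tableau of height $\le k$ starting at $\varnothing$ and ending in $(m)$ is the same as a word $w_1\cdots w_n$ with letters in $\{+1,-1,+2,-2,\dots,+k,-k\}$ — the letter $+r$ (resp. $-r$) meaning ``add (resp. delete) a box in row $r$'' — all of whose prefixes produce partitions, the last one being $(m)$. A standard Young tableau of size $n$ with height $\le 2k$ is a walk $\varnothing=\mu^{(0)}\subset\mu^{(1)}\subset\cdots\subset\mu^{(n)}$ in the set of partitions with at most $2k$ parts, with $\mu^{(i)}/\mu^{(i-1)}$ the box containing $i$. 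I would also record the elementary identity that the number of odd columns of a partition $\mu$ equals the alternating sum of its parts $\sum_{i\ge1}(-1)^{i-1}\mu_i$ (equivalently $\sum_c[\,\mu'_c\text{ odd}\,]$); this is the device that lets one control ``$m$ odd columns'' dynamically.

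Next I would define $\Phi$ by reading $w_1\cdots w_n$ left to right and maintaining a growing SYT: when $w_i=+r$ the box $i$ is placed in some \emph{odd}-indexed row of the current shape, and when $w_i=-r$ it is placed in some \emph{even}-indexed row, the exact row being determined by $r$ and the current shape through a bumping/sliding rule that stays inside the chosen parity class and is designed so that the shape remains a valid partition with at most $2k$ rows and so that the step $w_i$ (including $r$) can be recovered from the position of the new box. The prototype is $k=1$: $+1$ sends $i$ to row $1$ and $-1$ sends $i$ to row $2$, the ballot condition $\lambda_1\ge0$ for the oscillating walk is exactly the condition that the $2$-row shape stays a partition, and the final shape $(p,q)$ has $p-q=m$ odd columns. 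Because up-steps always fill odd rows and down-steps always fill even rows, the alternating sum of the parts of $\mu^{(n)}$ equals $(\#\text{up-steps})-(\#\text{down-steps})=m$ regardless of the bumping, so $\mu^{(n)}$ automatically has exactly $m$ odd columns.

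The remaining work is to check that $\Phi$ and its inverse are well-defined. In one direction: starting from a valid oscillating tableau of height $\le k$, the construction never breaks the partition shape and never needs a $(2k+1)$-st row. In the other: starting from an arbitrary SYT of height $\le 2k$ with $m$ odd columns, the reverse procedure (classify the box positions by parity of row, un-bump to recover each $w_i$) yields a word whose prefixes are partitions with at most $k$ parts and whose last partition is the single row $(m)$. Both statements I would prove by induction on $n$ using the invariant of the previous step; the endpoint claim in the second direction is where the hypothesis ``$m$ odd columns'' is used essentially, and the bound by $2k$ in the first is where the height bound $\le k$ is used essentially. As an independent sanity check, $\Phi$ should be compatible with the factorization through the Chen \emph{et al.}~\cite{Chetal07} correspondence: an oscillating tableau of height $\le k$ ending in $(m)$ corresponds to a partial matching of $[n]$ with $m$ isolated points and bounded nesting number; reading it as a fixed-point involution and applying RSK gives an SYT whose number of odd columns equals the number of fixed points (Sch\"utzenberger) and whose number of rows is governed by the nesting number, turning the bound $k$ into $2k$.

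The hard part is designing the bumping rule so that validity of the growing shape, reversibility, and the \emph{exact} bound of $2k$ rows all hold at once, and then proving that the height bound $\le k$ on the oscillating side is precisely what keeps the SYT within $2k$ rows — neither fewer nor more — together with the matching of endpoint conditions, namely that ``the oscillating tableau ends in a row shape $(m)$'' corresponds exactly to ``$\mu^{(n)}$ has $m$ odd columns''. On the factored route the same difficulty reappears as the Greene-type bookkeeping linking the height of an oscillating tableau to the nesting number of its partial matching and then to the number of rows of the RSK shape of the associated involution, the delicate point being the effect of the isolated points / fixed points, which inflate the shape and must be reconciled with the alternating-sum description of the odd columns.
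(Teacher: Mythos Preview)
Your main proposal—the direct bumping construction $\Phi$—is not actually carried out, and that is the gap. You say the row is ``determined by $r$ and the current shape through a bumping/sliding rule,'' but never give the rule; the obvious candidate (send $+r$ to row $2r-1$, $-r$ to row $2r$) already fails for $k=2$, since the oscillating tableau $(+1,+2,-2,-1)$ would place box~$2$ in row~$3$ while row~$2$ is still empty. Any repair requires genuine bumping, and then proving reversibility together with the height bound $\le 2k$ is exactly the hard problem you flag but do not solve. The $k=1$ prototype is misleading because each parity class has a single row.

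The paper's proof is precisely the factored route you demote to a ``sanity check,'' and it does not share the difficulty you anticipate. It composes: (i) RSK, giving SYT of height $\le 2k$ with $m$ odd columns $\leftrightarrow$ involutions of size $n$ with $m$ fixed points and no decreasing subsequence of length $2k{+}1$ (Lemma~\ref{lem:rsk}); (ii) the elementary Lemma~\ref{l:involutionnesting}, that such a decreasing subsequence exists iff the arc diagram has an enhanced $(k{+}1)$-nesting; (iii) a bijection $\psi=\iota\circ\phi^{-1}\circ\tau\circ\phi$ (Lemma~\ref{lem:psi}) between involution diagrams with no enhanced $(k{+}1)$-nesting and open matchings with no $(k{+}1)$-crossing, where $\phi$ is the Chen~\emph{et al.} map, $\tau$ transposes each Ferrers diagram, and $\iota$ turns fixed points into open arcs; and (iv) Proposition~\ref{prop:om}, identifying such open matchings with oscillating tableaux of height $\le k$ ending in $(m)$. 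The step your sketch glosses over is (iii): an enhanced nesting (fixed point \emph{under} nested arcs) and an open-matching nesting (open arc \emph{to the left} of nested arcs) are different patterns, and it is the transposition $\tau$ together with Proposition~\ref{prop:below} that converts one constraint into the other. That is the missing idea, not Greene-type bookkeeping.
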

The proof of Theorem~\ref{thm:SYT} is an explicit bijection between
the two classes. One consequence of the bijective map is the symmetric
joint distribution of two kinds of nesting patterns inside the class
of involutions.

Enumerative formulas for Young tableaux of bounded height have been
known for almost half a century~\cite{Gord71,GoHo68, BeKn72}, but new
enumerative formulas can be derived from Theorem~\ref{thm:SYT},
notably an expression which can be written as a diagonal of a
multivariate rational function. The new generating function
expressions are the subject of Section~\ref{sec:sytbh-egf}.

\subsection{Part 2. Hesitating tableaux and Baxter Permutations}
In the second part, we consider the family of \emph{hesitating
  tableaux}. These tableau sequences appear in studies of set
partitions avoiding so-called \emph{enhanced\/} nesting and crossing
patterns. The work of~\cite{Chetal07} again serves to describe
bijections between lattice paths and arc diagrams. Using a lattice
path interpretation, we make a generating function argument to connect
this combinatorial class to Baxter permutations. This connection was
recognized by Xin and Zhang~\cite{XiZh09}. Here we offer an explicit
proof, using formulas of Bousquet-M\'elou and Xin~\cite{BoXi05}, of
the following result.
\begin{theorem}
\label{thm:Baxter}
The number of hesitating tableaux of length $2n$ of height strictly
less than three is equal to the number~$B_{n+1}$ of Baxter
permutations of length~$n+1$, where
\[
B_n=\sum_{k=1}^n\frac{\binom{n+1}{k-1}\binom{n+1}{k}\binom{n+1}{k+1}}{\binom{n+1}{1}\binom{n+1}{2}}.
\] 
\end{theorem}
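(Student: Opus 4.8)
The plan is to prove the identity through generating functions, as the introduction indicates. Write $h_n$ for the number of hesitating tableaux of length $2n$ of height $<3$ and set $H(t)=\sum_{n\ge 0}h_n t^n$; the goal is to show $H(t)=\sum_{n\ge 0}B_{n+1}t^n$. The first step is to recast these tableaux as walks in the quarter plane. A partition of height at most two is a pair $(\lambda_1,\lambda_2)$ with $\lambda_1\ge\lambda_2\ge 0$, which I encode by the point $(\lambda_1-\lambda_2,\lambda_2)\in\mathbb{N}^2$. Adding or deleting a box in row $1$ or row $2$ then becomes a step in $\{(1,0),(-1,0),(1,-1),(-1,1)\}$, and each of the three types of hesitating double-step (``do nothing then add a box'', ``delete a box then do nothing'', ``add a box then delete a box'') translates into a small, explicit set of net steps: in particular the ``add then delete'' type yields a stationary step, with a multiplicity depending on which rows are available, together with the long steps $(2,-1)$ and $(-2,1)$. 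With this dictionary $h_n$ counts $2n$-step walks in $\mathbb{N}^2$ from the origin, with the resulting weighted step set, ending on the horizontal axis (this is the ``ends in a single-row shape'' condition that makes these objects the relevant subclass of open diagrams).

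Second, I would identify this model with the one analysed by Bousquet-M\'elou and Xin \cite{BoXi05} in their treatment of set partitions avoiding enhanced $3$-crossings. They set up a kernel (functional) equation for the bivariate generating function of exactly these walks and solve it by the obstinate kernel method, obtaining an explicit D-finite expression. Quoting that computation and specializing it to the endpoint we need — walks finishing anywhere on the axis, rather than returning to the origin — produces a closed formula, or at least an explicit linear differential equation, for $H(t)$.

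Third, I would match the outcome with the Baxter sequence. The series $\sum_{n\ge 1}B_n t^n$ is D-finite and satisfies a known linear differential equation (equivalently, $(B_n)$ obeys a known polynomial recurrence), and $B_{n+1}$ equals the terminating hypergeometric sum displayed in the statement. One then establishes $H(t)=\sum_{n\ge 0}B_{n+1}t^n$ either by verifying that $H$ satisfies that same differential equation and agrees on sufficiently many initial coefficients — a finite check once both equations are in hand — or by transforming the coefficient formula coming from the previous step into the triple-binomial sum by creative telescoping. The first route is the more robust.

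The principal obstacle lies in the first two steps: making the walk dictionary precise enough that the formulas of \cite{BoXi05} apply verbatim, which requires care with the boundary-dependent multiplicity of the stationary step and with the distinction between ``ends on the axis'' and ``returns to the origin''. If the published formulas cover only the origin-returning case, one must rerun the obstinate-kernel analysis for the axis-ending generating function — mechanical but lengthy. Once $H(t)$ is pinned down explicitly, the comparison with the Baxter generating function is a finite, routine verification, and I do not expect it to cause trouble.
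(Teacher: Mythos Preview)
Your proposal is in the right spirit—a generating-function argument leaning on Bousquet-M\'elou and Xin~\cite{BoXi05}—and would ultimately succeed, but the paper's execution is both different and cleaner on the two points you yourself flag as obstacles.

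First, the paper does \emph{not} change coordinates or collapse the double-steps. It keeps the walk in the Weyl-chamber region $W_2=\{x>y\ge 0\}$ with the individual hesitating steps, then uses the reflection principle (Proposition~12 of~\cite{BoXi05}) to write the $W_2$-count as a difference of two quarter-plane counts: $w(\lambda,\mu,n)=q(\lambda,\mu,n)-q(\lambda,\bar\mu,n)$. This sidesteps entirely the boundary-dependent multiplicities you introduce by collapsing ``add then delete'' into a single weighted step; those position-dependent weights would genuinely complicate the kernel equation, and the paper never has to face them. Moreover, \cite{BoXi05} already supplies explicit constant-term formulas for the $Q$-walk generating functions $[x^{i+1}]H(x;t)$ and $[x^{i+1}]V(x;t)$ at \emph{every} endpoint on either axis, so there is no need to rerun any obstinate-kernel analysis: the axis-ending series $W(1;t)=H(1;t)-V(1;t)$ is obtained by summing those formulas over $i\ge 0$, and the sum telescopes via a constant-term identity from~\cite{BoXi05}.

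Second, the final match with Baxter numbers is done not by comparing differential equations but by elementary binomial algebra. After the telescoping, $W(1;t)$ is a short signed sum of functions $A_{\ell,k}(t)$ whose coefficients are given by Lagrange inversion as $a_n(\ell,k,j)=\frac{k}{n+1}\binom{n+1}{j}\binom{n+1}{j+k}\binom{n}{j-\ell}$. Two easily checked linear relations among these $a_n(\ell,k,j)$ collapse the expression to $A_{0,1}+A_{3,1}-A_{1,2}$, and one more such relation shows the $n$th coefficient equals $\sum_j b_{n+1,j}=B_{n+1}$. So where you anticipated creative telescoping or a D-finite comparison, the paper gets away with three one-line binomial identities. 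Your route would work, but the paper's is shorter and avoids both of the difficulties you correctly identified.
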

Baxter numbers have been described as the ``big brother'' of the well
known Catalan numbers: they are the counting series for many
combinatorial classes, and these classes often contain natural
subclasses which are counted by Catalan numbers. For example, doubly
alternating Baxter permutations have a Catalan number counting
sequence~\cite{GuLi00}. One consequence of Theorem~\ref{thm:Baxter} is
a new two variable generating tree construction for Baxter numbers.

Unlike the results in Part 1, our proof of Theorem~\ref{thm:Baxter} is
not a combinatorial bijection. One impediment to a bijective proof is
a lack of a certain symmetry in the class of hesitating tableaux that
is present in most known Baxter classes. A bijection would certainly
be of interest, and in fact we conjecture a refinement of
Theorem~\ref{thm:Baxter}, in Conjecture~\ref{thm:BaxterConjecture},
which could guide a combinatorial bijection. 

We begin with definitions in Section~\ref{sec:combclasses}, and
some known bijections. Then we focus on the standard Young Tableaux of
bounded height in Section~\ref{sec:YTBH}, followed by our study of Baxter
objects in Section~\ref{sec:Baxter}.

\section{The combinatorial classes}
\label{sec:combclasses}
We begin with precise definitions for the combinatorial classes that
are used in our results.
\subsection{Tableaux families}
As mentioned above, a common encoding of walks on Young's lattice is given
by sequences of Ferrers diagrams. We consider three variants. 
Each sequence starts from the empty shape, and has a specified ending shape;
the difference between them is the limitations they
impose on when one can add or remove a box. The \emph{length\/}
of a sequence is the number of elements, minus one. (It is the number
of steps in the corresponding walk.)

\begin{description}
\item[A vacillating tableau] is an \emph{even} length sequence of
  Ferrers diagrams $(\lambda^{(0)}, \dots, \lambda^{(2n)})$ where
  consecutive elements in the sequence are either the same or differ
  by one square, under the restriction that $\lambda^{(2i)}\geq
  \lambda^{(2i+1)}$ and $\lambda^{(2i-1)}\leq \lambda^{(2i)}$.
\item[A hesitating tableau] is an even length sequence of Ferrers
  diagrams $(\lambda^{(0)}, \dots, \lambda^{(2n)})$ where consecutive
  differences of elements in the sequence fall under one of the
  following categories\footnote{Recall $\lambda\leq\mu$ means that $\lambda_i\leq\mu_i$ for all $i$} :
\begin{itemize}
\item $\lambda^{(2i)} = \lambda^{(2i+1)}$ and   $\lambda^{(2i+1)} <
  \lambda^{(2i+2)}$ (do nothing; add a box)
\item  $\lambda^{(2i)} > \lambda^{(2i+1)}$and   $\lambda^{(2i+1)} =
  \lambda^{(2i+2)}$ (remove a box; do nothing)
\item $\lambda^{(2i)} < \lambda^{(2i+1)}$ and   $\lambda^{(2i+1)} >
  \lambda^{(2i+2)}$ (add a box; remove a box).
\end{itemize}  
\item[An oscillating tableau] is simply a sequence of Ferrers diagrams
  such that at every stage a box is either added or deleted. Remark that
  the length of the sequence is not necessarily even.
\end{description}
In each case, if no diagram in the sequence is of height $k+1$, we say
that the tableau has its \emph{height bounded
  by~$k$}. Figure~\ref{fig:tableaux} shows examples of the different
tableaux.

\ytableausetup{smalltableaux}
\begin{figure}
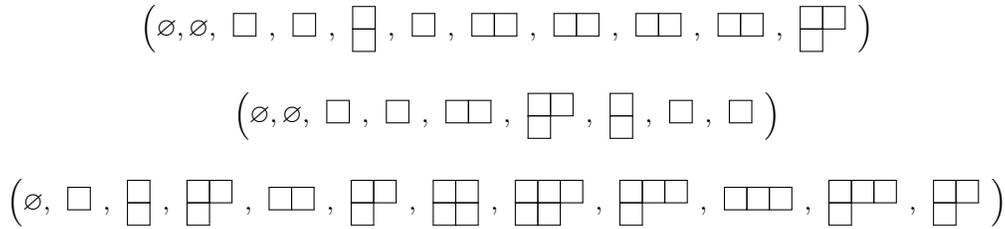


\center
$$\left(\varnothing,  \varnothing,
\yt{\ },  \yt{\ }, \yt{\  \\ \ }, \yt{\   }, \yt{\ & \ },\yt{\ & \ },\yt{\ & \ }, \yt{\  & \  } , \yt{\  & \ \\ \ } \right)$$

$$\left(\varnothing, \varnothing, 
\yt{\ },  \yt{\ }, \yt{\  & \ }, \yt{\ & \ \\ \ },\yt{\ \\ \ }, \yt{\  }, \yt{\  } \right)$$

$$\left(\varnothing,  
\yt{\ },  \yt{\ \\ \ }, \yt{\  & \  \\ \ }, \yt{\  & \ }, \yt{\  & \
\\ \ },\yt{\ & \ \\ \ & \ }, \yt{\ & \ & \ \\ \ & \ },  \yt{\ & \ &
\   \\ \  }, \yt{\  & \ & \   },  \yt{\  & \ & \  \\ \ },\yt{\  & \
\\ \ } \right)$$
 \caption{\textit{From top to bottom.} A vacillating tableau of length
   10, a hesitating tableau of length 8, an oscillating tableau of length
   11. 
In each case, the height is bounded by $2$.}
\label{fig:tableaux}
\mbox{}\\
{\color{gray!50} \hrule}
\end{figure}

\ytableausetup{textmode,nosmalltableaux}

\subsection{Lattice walks} 
Each integer partition represented as a Ferrers diagram in a tableau
sequence can also be represented by a vector of its parts. If the
tableau sequence is bounded by $k$, then a $k$-tuple is
sufficient. 

The sequence of vectors defines a lattice path. For example, each of the three tableau families above each directly
corresponds to a lattice path family in the region
\[W_k=\{(x_1,x_2, \dots, x_k): x_i \in \mathbb{Z}, x_1\geq x_2\geq \dots\geq x_k \geq 0 \}\] starting at the
origin $(0,\dots,0)$. We can explicitly define three classes of lattice
paths by translating the constraints on the tableau families. 

\noindent \textbf{Remark.} Twice in this article, in order to relate previous results, we use a translation of
this region and still identify it as $W_k$. The translated regions 
are identical to the original up to a small shift of coordinates.  This change is 
detailed explicitly in the text (the allowed sets of steps are never changed).

Let $e_i$ be the elementary basis vector with a 1 at position $i$ and 0
elsewhere. The steps in our lattice model are all elementary vectors,
with possibly one exception: the zero vector, also called \emph{stay
step}. The length of the walk increases with a stay step, but the
position does not change.
 
 \begin{description}
 \item[A $\boldsymbol{W_k}$-vacillating walk]  is a walk of
   \emph{even\/} length in $W_k$ using 
   (i) two consecutive stay steps; 
   (ii) a stay step followed by an $e_i$ step;
   (iii) a $-e_i$ step followed by a stay step; 
   (iv) a $-e_i$ step followed by an $e_j$ step.
 \item[A $\boldsymbol{W_k}$-hesitating walk] has even length and steps occur in the following pairs: (i) a stay step followed by an $e_i$  step; (ii) a
$-e_i$ step followed by a stay step; (iii) an $e_i$  step follow by $-e_j$ step. 
\item[A $\boldsymbol{W_k}$-oscillating walk]
 starts at the origin and
takes steps of type $e_i$ or $-e_i$, for $1\leq i\leq k$. It does not
permit stay steps. 
 \end{description}
Some examples are depicted in Figure~\ref{fig:walks}.

\begin{figure}
\begin{center}
\includegraphics[width=0.8\textwidth]{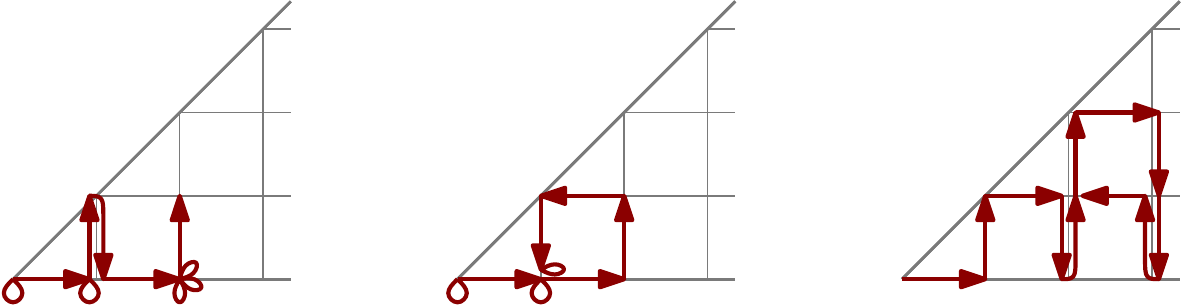}
\end{center}
\caption{\textit{From left to right.}  A $W_2$-vacillating walk, a
  $W_2$-hesitating walk, a $W_2$-oscillating walk. The stay steps are
  drawn as loops. (These walks correspond to the tableaux of {\sc Figure}~\ref{fig:tableaux}.) }
\label{fig:walks}
\mbox{}\\
{\color{gray!50} \hrule}
\end{figure}

\subsection{Open arc diagrams} Arc diagrams are a useful way to provide a
graphical representation of a combinatorial class. They are
particularly useful to detect certain patterns. Matchings and set
partitions are examples of classes that have natural representations
using arc diagrams. In the arc diagram representation of a set
partition of~$\{1, 2, \ldots, n\}$, a row of dots is
labelled from~$1$ to~$n$. A partition
block~$\{a_1, a_2, \ldots, a_j\}$, ordered~$a_1<a_2<\ldots<a_j$, is
represented by the set of arcs~$\{(a_1,a_2), (a_2,a_3), \ldots,(a_{j-1}, a_j)\}$
which are always drawn above the row of dots. 
We adopt the convention that a part of size one, say  $\{i\}$,
contributes a loop, that is a trivial arc $(i,i)$. In this work, we do
not draw the loops, although some authors do.  The set
partition~$\pi=\{\{1,3,7\},\{2,8\},\{4\},\{5,6\}\}$ is depicted as an
arc diagram in Figure~\ref{fig:expartition}. Matchings are represented
similarly, with each pair contributing an arc.
\begin{figure}[h!]
\centering
\begin{tikzpicture}[scale=0.4]
\foreach \x in {1,2,...,8}{
\node[pnt, label=below:{$\x$}](\x) at (\x,0){};}
\draw[bend left=45](1) to (3) to (7);
\draw[bend left=45](2) to (8);
\draw[bend left=45](5) to (6);
\end{tikzpicture}
\caption{The set partition~$\pi=\{1,3,7\},\{2,8\},\{4\},\{5,6\}$}
\label{fig:expartition}
\end{figure}
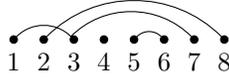

A set of~$k$ distinct arcs $(i_1, j_1), \dots, (i_k, j_k)$ forms a \emph{$k$-crossing} if
$i_1<i_2<\dots<i_k<j_1<j_2<\dots<j_k$. They form an \emph{enhanced $k$-crossing} if
$i_1<i_2<\dots<i_k\leq j_1<j_2<\dots<j_k$. (By convention, an isolated dot of the partition forms an enhanced $1$-crossing.)  They form a
\emph{$k$-nesting} if $i_1<i_2<\dots <i_k<j_k<\dots<j_2<j_1$. They
form an \emph{enhanced $k$-nesting} if $i_1<i_2<\dots
<i_k\leq j_k<\dots<j_2<j_1$ (As previously, $i_k=j_k$ means that $i_k$ is an isolated element in the set partition.).   Figure~\ref{fig:3cros}
illustrates a $3$-nesting, an enhanced $3$-nesting, and a
$3$-crossing. 


\begin{figure}[h!]
\center
\begin{tikzpicture}[scale=0.5]
\node[pnt] at (0,0)(1){};
\node[pnt] at (1,0)(2){};
\node[pnt] at (2,0)(3){};
\node[pnt] at (3,0)(4){};
\node[pnt] at (4,0)(5){};
\node[pnt] at (5,0)(6){};
\node[pnt] at (7,0)(1c){};
\node[pnt] at (8,0)(2c){};
\node[pnt] at (9,0)(3c){};
\node[pnt] at (10,0)(4c){};
\node[pnt] at (11,0)(5c){};
\node[pnt] at (13,0)(1b){};
\node[pnt] at (14,0)(2b){};
\node[pnt] at (15,0)(3b){};
\node[pnt] at (16,0)(4b){};
\node[pnt] at (17,0)(5b){};
\node[pnt] at (18,0)(6b){};
\node[pnt] at (20,0)(1d){};
\node[pnt] at (21,0)(2d){};
\node[pnt] at (22,0)(3d){};
\node[pnt] at (23,0)(4d){};
\node[pnt] at (24,0)(5d){};
\draw(1b)  to [bend left=45] (6b);
\draw(2b)  to [bend left=45] (5b);
\draw(3b)  to [bend left=45] (4b);
\draw(1d)  to [bend left=45] (5d);
\draw(2d)  to [bend left=45] (4d);
\draw(1)  to [bend left=45] (4);
\draw(2)  to [bend left=45] (5);
\draw(3)  to [bend left=45] (6);
\draw(1c)  to [bend left=45] (3c);
\draw(2c)  to [bend left=45] (4c);
\draw(3c)  to [bend left=45] (5c);
\end{tikzpicture}
\caption{Patterns in arc diagrams. \emph{From left to right:\/} a $3$-crossing, an enhanced $3$-crossing, a $3$-nesting, an
 enhanced $3$-nesting.}
\label{fig:3cros}
\end{figure}
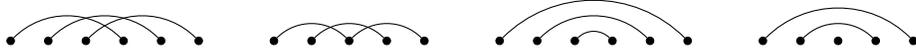

Recently, Burrill, Elizalde, Mishna and Yen~\cite{Buetal12}
generalized arc diagrams by permitting \emph{open arcs}: in these
diagrams each arc has a left endpoint but not necessarily a
right endpoint. The open arcs can be viewed as arcs ``under
construction". An \textit{open partition} (resp. an \emph{open
  matching}) is a set partition (resp. a matching) diagram with open
arcs. In open matchings, the left endpoint of an open arc is never the
right endpoint of another arc. Figure~\ref{fig:exopen} shows examples
of such diagrams.

\begin{figure}[h!]
\centering
\begin{tikzpicture}[scale=0.4]
\foreach \x in {1,2,...,9}{
\node[pnt, label=below:{$\x$}](a\x) at (\x,0){};}
\foreach \x in {1,2,...,10}{
\node[pnt, label=below:{$\x$}](\x) at (\x+12,0){};}

\draw(1)  to [bend left=45] (7);
\draw(3)  to [bend left=45] (9);
\draw(5)  to [bend left=45] (10);
\draw(4)  to [bend left=45] (6);

\draw[bend left=20](a3) to ++(1.5,2);
\draw[bend left=20](a5) to ++(1.5,2);
\draw[bend left=20](a7) to ++(1.5,2);
\draw[bend left=20](2) to ++(1.5,2);
\draw[bend left=20](8) to ++(1.5,2);

\draw[bend left=45](a2) to (a4);
\draw[bend left=45](a4) to (a5);
\draw[bend left=45](a1) to (a7);
\draw[bend left=45](a8) to (a9);
\end{tikzpicture}
\caption{An open partition and an open matching.}
\label{fig:exopen}
\end{figure}
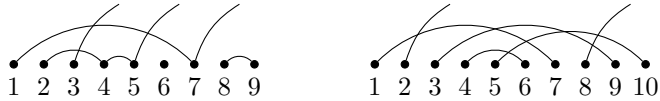 

We are also interested in crossing and nesting patterns in open
diagrams. Here we simplify the notation of~\cite{Buetal12}. A
\emph{$k$-crossing\/} in an open diagram is either a set of $k$
mutually crossing arcs (as before), or the union of $k-1$ mutually
crossing arcs and an open arc whose left endpoint is to the right
of the last left endpoint and to the left of the first right endpoint of the $k-1$
crossing arcs. A \emph{$k$-nesting\/} in an open diagram is either a
set of $k$ mutually nesting arcs, or a set of $k-1$ mutually nesting
arcs, and an open arc whose left endpoint is to the left of the $k-1$
nesting arcs. We generalize enhanced \mbox{$k$-crossings} and enhanced $k$-nestings
in an open diagram similarly. Examples are given in
Figure~\ref{fig:future}. If we want to point out that a crossing (or nesting) has no open
arc, we say that it is a \emph{plain} $k$-crossing (or $k$-nesting). 

\begin{figure}[h!]
\center
\begin{tikzpicture}[scale=0.5]
\node[pnt] at (0,0)(1){};
\node[pnt] at (1,0)(2){};
\node[pnt] at (2,0)(3){};
\node[pnt] at (3,0)(4){};
\node[pnt] at (4,0)(5){};

\node[pnt] at (7,0)(1c){};
\node[pnt] at (8,0)(2c){};
\node[pnt] at (9,0)(3c){};
\node[pnt] at (10,0)(4c){};
\node[pnt] at (13,0)(1b){};
\node[pnt] at (14,0)(2b){};
\node[pnt] at (15,0)(3b){};
\node[pnt] at (16,0)(4b){};
\node[pnt] at (17,0)(5b){};

\node[pnt] at (20,0)(1d){};
\node[pnt] at (21,0)(2d){};
\node[pnt] at (22,0)(3d){};
\node[pnt] at (23,0)(4d){};
\draw(1)  to [bend left=45] (4);
\draw(2)  to [bend left=45] (5);
\draw(3)  to [bend left=30] (3.5,1);
\draw[dotted]  (3.5,1)  to [bend left=30]  node [midway] {?} (5,0);

\draw(1c)  to [bend left=45] (3c);
\draw(2c)  to [bend left=45] (4c);
\draw(3c)  to [bend left=30] (10,0.5);
\draw[dotted]  (10,0.5)  to [bend left=30]  node [midway] {?} (11,0); 

\draw(1b)  to [bend left=25] (15.5,1.25);
\draw[dotted]  (15.5,1.25)  to [bend left=25]  node [midway] {?} (18,0); 
\draw(2b)  to [bend left=45] (5b);
\draw(3b)  to [bend left=45] (4b);

\draw(1d)  to [bend left=25] (22,1);
\draw[dotted]  (22,1)  to [bend left=25]  node [midway] {?} (24,0); 
\draw(2d)  to [bend left=45] (4d);

\end{tikzpicture}
\caption{Patterns in open diagrams. \emph{From left to right:\/} a $3$-crossing, an enhanced $3$-crossing, a $3$-nesting, and an
 enhanced $3$-nesting.   }
\label{fig:future}
\end{figure}
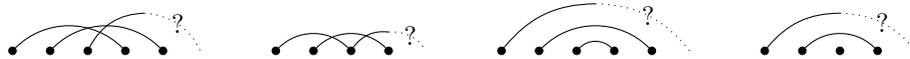

\section{Bijections}
\subsection{Description of Chen, Deng, Du, Stanley, Yan's bijection}
\label{sec:cddsy}
The work of Chen, Deng, Du, Stanley and Yan~\cite{Chetal07} describes
bijections between arc diagram families and tableau families. In this
section we summarize a selection of their results, and adapt it to our
needs.  Their main bijection maps a set partition~$\pi$ to a sequence
of Young tableaux\footnote{A \textit{Young tableau} is defined here as the filling of a Ferrers diagram with positive integers, such that the entries in each row and in each column are strictly decreasing (usually the entries are increasing; the reason for this change is explained later). The set of entries does not need to form an interval of the form $\left\{1,\dots,n\right\}$.}, the shapes of which form a vacillating
tableau, denoted by~$\phi(\pi)$. We do not describe the generalization of their construction to hesitating tableaux and oscillating tableaux (still due to Chen~\emph{et al.}), but it exists and it 
will be used for the proof of Propositions \ref{prop:om} and \ref{prop:ep}.

We describe here their bijection~$\phi$ but with a slight difference:
we read the arc diagrams from left to right, instead from right to
left as it was done originally. In concrete terms, it means that the
image of a partition~$\pi$ under $\phi$, as we write it, is the mirror
image of the actual $\phi(\pi)$.  Our approach is justified by the
fact that natural properties emerge when the reading direction is
swapped. This can be particularly seen through
Proposition~\ref{prop:below}, where the size of the crossings and
nestings around the $i$th dot is linked to the height and the width of
the $i$th Ferrers diagram.

Let $\pi$ be a set partition of size $n$. We are going to build from
$\pi$ a sequence of Young tableaux where the entries are decreasing in each row
  and each column -- the fact that we use decreasing order instead of
increasing order is a direct consequence of the change of the reading
direction.  The first entry is the empty Young tableau. We increment a
counter~$i$ by one from 1 to $n$. A given step in the algorithm
proceeds as follows.  If~$i$ is the right-hand endpoint of an arc
in~$\pi$, then delete~$i$ from the previous tableau (it turns out that
$i$ must be in a corner). Otherwise, replicate the previous
tableau. Then, after this move, if~$i$ is a left-hand endpoint of an
arc $(i,j)$ in $\pi$, insert~$j$ by the Robinson-Schensted-Knuth (RSK)
insertion algorithm for the decreasing order into the previous
tableau. If~$i$ is not a left-hand endpoint, replicate the previous
tableau.

The output of this process is a sequence of Young Tableaux starting
from and ending at the empty Young tableau. The sequence of shapes is
given by a vacillating tableau and is denoted~$\phi(\pi)$. \\

\ytableausetup{mathmode}
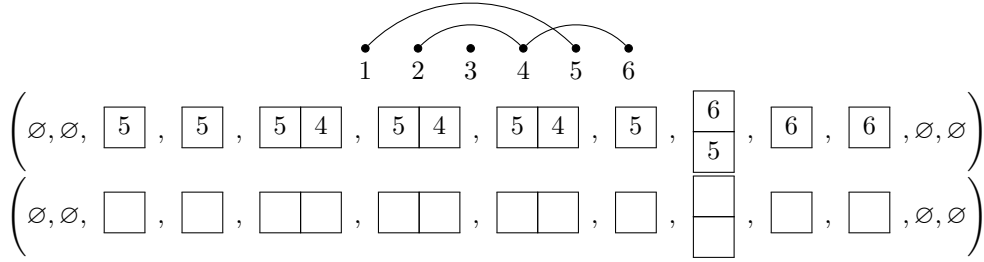
\begin{figure}
\center
\begin{tikzpicture}[scale=0.7]
\foreach \x in {1,2,...,6}{
\node[pnt, label=below:{$\x$}](\x) at (\x,0){};}
\draw(1)  to [bend left=45] (5);
\draw(2)  to [bend left=45] (4);
\draw(4)  to [bend left=45] (6);
\end{tikzpicture}

$\left(\varnothing, \varnothing,
\yt{5}, \yt{5}, \yt{5 & 4}, \yt{5 & 4}, \yt{5 & 4}, \yt{5}, \yt{6 \\ 5}, \yt 6, \yt 6, \varnothing, \varnothing\right)$

$\left(\varnothing, \varnothing,
\yt{\ }, \yt{\ }, \yt{\  & \ }, \yt{\  & \ }, \yt{\  & \ }, \yt{\ },  \yt{\  \\ \ }, \yt \ , \yt \ , \varnothing, \varnothing\right)$
\caption{\textit{Top.} The set partition $\pi =
  \{1,5\},\{2,4,6\},\{3\}$. \textit{Middle.} The corresponding Young
  tableau sequence. \textit{Bottom.} The vacillating tableau given
  by $\phi(\pi)$. }
\mbox{}\\
{\color{gray!50} \hrule}
\label{fig:chen}
\end{figure}

\newcommand{\ytt}[1]{\begin{ytableau}#1\end{ytableau}}
\ytableausetup{smalltableaux}

\noindent{\bf Example.} Consider the partition $\pi$ from Figure
  \ref{fig:chen}. The number $1$ is the left-hand endpoint of the arc
  $(1,5)$, but not the right-hand endpoint of any arc, so the first
  three Young tableaux are $\varnothing, \varnothing$, \ytt{5}. Similarly,
  $2$ is the left-hand endpoint of $(2,4)$ but not a right-hand
  endpoint, so the two following Young Tableaux are \ytt{5}, \ytt{5 &
    4}. The number $3$ is an isolated point, so the tableau \ytt{5 &
    4} is repeated twice. The number $4$ being the right-hand endpoint
  of $(2,4)$ and the left-hand endpoint of $(4,6)$, we delete $4$,
  then we add $6$: we obtain \ytt{5}, \ytt{6 \\ 5}. The rest of the
  sequence is given in Figure~\ref{fig:chen}.\\

Given a vacillating tableau
$(\varnothing,\lambda_1,\dots,\lambda_{2n-1},\varnothing)$, there
exists a unique way to fill the entries of the Ferrers diagrams into
Young tableaux so that it corresponds to an image of a set
partition. This has been proved in~\cite{Chetal07}, and implies that~$\phi$ is
a bijection.

In an arc diagram, we say that the \textit{segment $[i,i+1]$ is below}
a $k$-crossing if the arc diagram contains~$k$ arcs $(i_1, j_1), \dots, (i_k, j_k)$
such that $i_1<i_2<\dots<i_k\leq i$ and $i+1\leq
j_1<j_2<\dots<j_k$. Similarly, the \textit{segment $[i,i+1]$ is below}
a $k$-nesting if there exist $k$ arcs $(i_1, j_1), \dots, (i_k, j_k)$
such that $i_1<i_2<\dots <i_k\leq i$ and $i+1 \leq
j_k<\dots<j_2<j_1$. For instance, in Figure~\ref{fig:chen}, the
segment $[3,4]$ is below a $2$-nesting but not below a $2$-crossing,
while the segment $[4,5]$ is below a $2$-crossing but not below a
$2$-nesting.  With this definition we can formulate and prove a stronger version of \cite[Theorem 3.2]{Chetal07} (this property 
can also easily be seen in the growth diagram formulation of the bijection -- see~\cite{Krat06}).
\begin{prop} \label{prop:below} Let~$\pi$ be a partition of size $n$
  and $\phi(\pi)=(\lambda_0,\dots,\lambda_{2n})$. For
  every $i \in \{1,\dots,n\}$, the segment $[i,i+1]$ of $\pi$ is below a
  $k$-crossing (resp.~$k$-nesting) if and only if $\lambda_{2i}$ in $\phi(\pi)$ has
  at least~$k$ rows (resp.~$k$ columns).
\end{prop}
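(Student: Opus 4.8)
The plan is to track explicitly the entries of the Young tableaux produced by $\phi$ at even times and to reduce the two equivalences to Schensted's theorem. Write $T_i$ for the tableau whose shape is $\lambda_{2i}$, i.e. the tableau obtained after both the deletion step (time $2i-1$) and the insertion step (time $2i$) associated with the dot $i$. My first claim is that the entries of $T_i$ are exactly the right endpoints of the arcs \emph{open over the segment} $[i,i+1]$, that is, the set $\{\, b : (a,b)\in\pi,\ a\le i<b \,\}$: processing the left endpoint $a$ of an arc $(a,b)$ inserts the value $b$, and processing a right endpoint $b$ (at time $2b-1$, when $b$ is the \emph{minimum} entry present, since every other current entry is a right endpoint exceeding $b$) deletes it from a corner. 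Listing the open arcs in increasing order of their left endpoints $a_1<\dots<a_r\le i$ produces a word $w_i=(b_1,\dots,b_r)$ of their right endpoints, and the dots are processed in exactly this order. I would then prove, by induction on $i$, that $T_i$ is the decreasing-order RSK insertion tableau of $w_i$. The insertion steps are immediate, since they merely append a new letter at the end of $w_i$; the only substantial point is that each deletion --- removal of the current minimum, a corner cell --- agrees with re-inserting the surviving subword. This ``delete-the-minimum'' compatibility is precisely the invariance encoded by Fomin's local growth rules, which is where I would lean on the growth-diagram formulation of $\phi$ (cf.~\cite{Krat06, Chetal07}); alternatively it can be verified directly as a short lemma about RSK.

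Second, I would unpack the definition of ``below.'' A family of $k$ arcs witnessing that $[i,i+1]$ is below a $k$-crossing has all left endpoints $\le i$ and all right endpoints $\ge i+1$, so all $k$ arcs are open over $[i,i+1]$; the crossing condition $i_1<\dots<i_k\le i$ together with $i+1\le j_1<\dots<j_k$ says exactly that, read in left-endpoint order, their right endpoints form an \emph{increasing} subsequence of $w_i$. Symmetrically, the $k$ arcs of a $k$-nesting below $[i,i+1]$ are open and their right endpoints form a \emph{decreasing} subsequence of $w_i$. Hence the largest $k$ for which $[i,i+1]$ lies below a $k$-crossing (resp.\ $k$-nesting) equals the length of the longest increasing (resp.\ decreasing) subsequence of $w_i$.

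Finally I would invoke Schensted's theorem in its decreasing-order form --- equivalently, the usual statement applied after negating all entries, which preserves shapes and interchanges increasing and decreasing subsequences. For the insertion tableau of $w_i$ the number of rows equals the length of the longest increasing subsequence of $w_i$, and the number of columns equals the length of the longest decreasing subsequence. Combining the three steps: $[i,i+1]$ is below a $k$-crossing iff $w_i$ has an increasing subsequence of length $k$ iff $\lambda_{2i}$ has at least $k$ rows, and likewise nestings match columns; both directions are automatic, as these are equalities of extremal quantities. The main obstacle is the inductive identity $T_i=\mathrm{RSK}(w_i)$, and specifically reconciling the deletions with the insertion tableau of the surviving subword; the translation of ``below'' into subsequences and the appeal to Schensted's theorem are then routine.
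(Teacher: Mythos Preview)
Your argument is correct and follows essentially the same route as the paper's proof. Both identify the entries of $T_{2i}$ with the right endpoints of the arcs open over $[i,i+1]$, record them as a word in insertion (i.e., left-endpoint) order, translate ``below a $k$-crossing/nesting'' into the existence of an increasing/decreasing subsequence of that word, and then invoke Schensted's theorem. The only difference is packaging: the paper imports the three key facts (entries of $T_{2i}$, the insertion-order word $\sigma_{2i}$, and the Schensted consequence) directly from \cite{Chetal07}, whereas you reconstruct them and correctly isolate the one nontrivial step --- that deleting the current minimum from $T_{2i}$ agrees with the insertion tableau of the surviving subword --- and defer it to the growth-diagram formulation or a short RSK lemma, exactly as the paper's parenthetical remark about growth diagrams suggests.
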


\noindent \textbf{Example.} We continue our example and verify
that $\lambda^{(6)} = \ytt{\ & \ }$ has $2$ columns but not $2$ rows,
and accordingly~$[3,4]$ is below a 2-nesting, but not a 2-crossing. 

\begin{proof} Let $(T_0,\dots,T_{2n})$ be the sequence of Young
  tableaux corresponding to the partition~$\pi$. We use some ingredients
  from the proof of Theorem 3.2 of
  \cite[p. 1562]{Chetal07}\footnote{Recall that one bijection is the
    mirror image of the other. So the indices differ between
    \cite{Chetal07} and here.}:
\begin{enumerate}
\item A pair $(i,j)$ is an arc in the representation of $\pi$ if and
  only if $j$ is an entry in $T_{2i},T_{2i+1},\dots,T_{2(j-1)}$; 
\item Let $\sigma_i = w_1 w_2\dots w_r$ denote the permutation of the
  entries of $T_i$ such that $w_1, w_2, \dots, w_r$ have been inserted
  in $(T_0,\dots,T_{2n})$ in this order;
\item The permutation $\sigma_i$ has an increasing subsequence of
  length $k$ if and only if the partition $\lambda_{i}$ has at least
  $k$ rows.
\end{enumerate}
The following statements are then  equivalent:
\begin{itemize}
\renewcommand\labelitemi{$\cdot$}
\item The segment $[i,i+1]$  is below a $k$-crossing. 
\item There exist $k$ arcs $(i_1, j_1), \dots, (i_k, j_k)$ in $\pi$ such that 
$$i_1<i_2<\dots<i_k\leq i\textrm{ and }i+1\leq j_1<j_2<\dots<j_k.$$ 
\item There exist $k$ numbers $j_1<j_2<\dots<j_k$ that are entries of  $T_{2i}$  such that $j_1, j_2, \dots,j_k$  have been inserted in this order in $(T_0,\dots,T_{2n})$.  
\item There exist $k$ numbers $j_1<j_2<\dots<j_k$ such that 
$j_1j_2 \dots j_k$ is a subsequence of $\sigma_{2i}$.  
\item The diagram $\lambda_{2i}$ has at least $k$ rows. 
\end{itemize}
The proof for $k$-crossings is similar.
\end{proof}

Considering all intervals $[i,i+1]$ for $1\leq i \leq n$, we recover the statement of Theorem 3.2 from \cite{Chetal07}.

\begin{corollary} 
\label{cor:classic} 
A set partition $\pi$ has no $(k+1)$-crossing (resp. no
$(k+1)$-nesting) if and only if no Ferrers diagram in the sequence
$\phi(\pi)$ has $k+1$ rows (resp. columns).
\end{corollary}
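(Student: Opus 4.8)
The statement to prove is Corollary~\ref{cor:classic}, which recovers \cite[Theorem 3.2]{Chetal07} from the stronger local version just established in Proposition~\ref{prop:below}.

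\medskip

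The plan is to deduce Corollary~\ref{cor:classic} from Proposition~\ref{prop:below} by quantifying over all segments $[i,i+1]$. First I would observe that a set partition $\pi$ of size $n$ contains a $(k+1)$-crossing if and only if \emph{some} segment $[i,i+1]$, with $1 \le i \le n$, lies below a $(k+1)$-crossing: indeed, given $k+1$ mutually crossing arcs $(i_1,j_1),\dots,(i_{k+1},j_{k+1})$ with $i_1 < \dots < i_{k+1} < j_1 < \dots < j_{k+1}$, the segment $[i,i+1]$ with $i = i_{k+1}$ (equivalently any $i$ with $i_{k+1} \le i < j_1$) satisfies $i_1 < \dots < i_{k+1} \le i$ and $i+1 \le j_1 < \dots < j_{k+1}$, so it is below that crossing; conversely, any $k+1$ arcs witnessing that $[i,i+1]$ is below a $(k+1)$-crossing are by definition mutually crossing. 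The same argument works verbatim for nestings, replacing the inequality chain $j_1 < \dots < j_{k+1}$ by $j_{k+1} < \dots < j_1$.

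\medskip

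Next I would apply Proposition~\ref{prop:below} to each $i$: the segment $[i,i+1]$ of $\pi$ is below a $(k+1)$-crossing (resp.\ $(k+1)$-nesting) if and only if $\lambda_{2i}$ has at least $k+1$ rows (resp.\ columns). Combining this with the first observation, $\pi$ has a $(k+1)$-crossing if and only if $\lambda_{2i}$ has at least $k+1$ rows for some $i \in \{1,\dots,n\}$. It remains to note that the only diagrams in the vacillating tableau $\phi(\pi) = (\lambda_0,\dots,\lambda_{2n})$ that can possibly have $k+1$ rows are the even-indexed ones $\lambda_{2i}$: since the sequence starts and ends at $\varnothing$ and consecutive shapes differ by at most one box, and more precisely because $\phi(\pi)$ is a vacillating tableau satisfying $\lambda_{2i} \ge \lambda_{2i+1}$ and $\lambda_{2i-1} \le \lambda_{2i}$, each odd-indexed shape is contained in an adjacent even-indexed shape; hence the maximum number of rows (and likewise columns) over the whole sequence is attained at an even index. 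Therefore ``no Ferrers diagram in $\phi(\pi)$ has $k+1$ rows'' is equivalent to ``no $\lambda_{2i}$ has $k+1$ rows'', which by the above is equivalent to ``$\pi$ has no $(k+1)$-crossing''. The nesting case is identical with ``rows'' replaced by ``columns''.

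\medskip

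I do not anticipate a serious obstacle here, since the heavy lifting is already done in Proposition~\ref{prop:below}; the proof is essentially a universal quantification. The one point requiring a little care is the reduction showing that odd-indexed shapes never contribute a new maximal row or column count, so that restricting attention to even indices loses nothing --- this follows immediately from the defining containment relations of a vacillating tableau, but it is worth stating explicitly so that the phrase ``no Ferrers diagram in the sequence'' in the corollary is justified rather than just ``no even-indexed diagram''.
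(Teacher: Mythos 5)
Your proof is correct and takes essentially the same route as the paper, which simply remarks after Proposition~\ref{prop:below} that considering all intervals $[i,i+1]$ for $1\leq i\leq n$ recovers the corollary. The extra details you supply --- that $\pi$ has a $(k+1)$-crossing (resp.\ nesting) iff some segment lies below one, and that odd-indexed shapes are contained in adjacent even-indexed shapes so the maximal row/column count is attained at even indices --- merely make explicit what the paper leaves implicit, and they are all accurate.
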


\noindent \textbf{Remark.} The \emph{crossing level\/} of a set
partition $\pi$, denoted $cr(\pi)$, is the maximal~$k$ such that $\pi$
has a $k$-crossing. Similarly, the \emph{nesting level\/} of a set
partition $\pi$, denoted $ne(\pi)$, is the maximal~$k$ such that $\pi$
has a $k$-nesting. Chen~\emph{et al.\/} conclude from the previous
corollary that the joint distribution of ~$cr$ and~$ne$ over all the
set partition diagrams of fixed size is symmetric. That is,
\[\sum_{\substack{\pi\textrm{ set partition diagram}\\\textrm{of size }n}   } x^{cr(\pi)} y^{ne(\pi)} = \sum_{\substack{\pi\textrm{ set partition diagram}\\\textrm{of size }n}   } y^{cr(\pi)} x^{ne(\pi)}.\]
Let $\tau$ denote transposition, the operation that transposes every Ferrers
diagram inside a vacillating tableau. Then $\phi^{-1} \circ \tau \circ
\phi$ swaps the crossing level and the nesting level of a set
partition. Moreover, note that $\phi^{-1} \circ \tau \circ \phi$
preserves the opener/closer sequence, i.e., if the number $i$ is an
isolated point (resp. a left endpoint, a right endpoint, a left and
right endpoint at the same time) in a partition $\pi$, then $i$ is an
isolated point (resp. a left endpoint, a right endpoint, a left and
right endpoint at the same time) in $\phi^{-1} \circ \tau \circ
\phi(\pi)$.
\subsection{Bijections with open partitions}
\label{ss:bijop}
Next we describe a generalization of the bijection of Chen~\emph{et
  al.} to the class of tableaux ending at a row shape. We thereby link
to the classes of Section~\ref{sec:combclasses}.

\begin{prop}
A bijection can be constructed between any two of the
following classes:
\begin{enumerate}
\item the set of open partition diagrams of length $n$ with no $(k+1)$-crossing, with $m$ open arcs;
\item the set of open partition diagrams of length $n$ with no $(k+1)$-nesting, with $m$ open arcs;
\item the set of vacillating tableaux of length~$2n$, with maximum
  height bounded by~$k$, ending in a row of length $m$;
\item the set of $W_k$-vacillating walks of length $2n$ ending
  at $(m, 0, \dots, 0)$.
\end{enumerate}
\label{prop:bij}
\end{prop}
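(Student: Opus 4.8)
The plan is to establish the four-way equivalence by exhibiting bijections along a chain, rather than proving all six pairwise bijections directly. The easiest edges are between (3) and (4): a bounded-height vacillating tableau is literally the same data as a $W_k$-vacillating walk, once each Ferrers diagram with at most $k$ parts is recorded as the $k$-tuple of its part sizes; the step-pair conditions (i)--(iv) in the definition of a $W_k$-vacillating walk are exactly the transcriptions of the conditions $\lambda^{(2i)}\geq\lambda^{(2i+1)}$ and $\lambda^{(2i-1)}\leq\lambda^{(2i)}$ together with the add/remove-a-box constraint, and ending at the partition $(m)$ corresponds to ending at $(m,0,\dots,0)$. So that equivalence is a definitional unwinding and needs only a sentence or two.

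Next I would connect (1) to (3). Here the idea is to extend the bijection $\phi$ of Chen \emph{et al.}, described in Section~\ref{sec:cddsy}, to open partition diagrams. Given an open partition diagram on $n$ dots, run the same incremental RSK procedure: at step $i$, if $i$ is a (plain) right endpoint delete $i$ from the current tableau; if afterwards $i$ is the left endpoint of a plain arc $(i,j)$, insert $j$; and if $i$ is the left endpoint of an \emph{open} arc, insert a new symbol that is larger than everything currently present (one may use the formal symbol ``$\infty$'', or more carefully a fresh decreasing sequence of markers $\infty_1 > \infty_2 > \cdots$ handed out in order of appearance of open arcs, so that the decreasing-order RSK is well defined). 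Since these $\infty$-symbols are never deleted, at the end the tableau is a single row whose length $m$ equals the number of open arcs, and the sequence of shapes is a vacillating tableau of length $2n$ ending in the row $(m)$; conversely, as in the closed case, a vacillating tableau ending in a row of length $m$ determines a unique filling, with the $m$ surviving entries being precisely the images of the $m$ open arcs. The height bound translates via Corollary~\ref{cor:classic} (or rather its open-diagram analogue): I would invoke the open-diagram version of Proposition~\ref{prop:below}, checking that the definition of a $k$-crossing in an open diagram (a plain $(k-1)$-crossing together with an open arc whose left endpoint lies in the appropriate window) matches exactly the ``increasing subsequence of length $k$ in $\sigma_{2i}$'' condition once the $\infty$-markers are included, so that ``no $(k+1)$-crossing'' is equivalent to ``no shape in $\phi(\pi)$ has $k+1$ rows.'' This gives the bijection (1)$\leftrightarrow$(3).

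Finally, (1)$\leftrightarrow$(2) follows formally from the transposition symmetry already noted after Corollary~\ref{cor:classic}: the map $\phi^{-1}\circ\tau\circ\phi$, transposing every Ferrers diagram in the vacillating tableau, is an involution on partition diagrams swapping crossing level and nesting level, and it preserves the opener/closer type of each dot. One checks this extends to open diagrams — transposition still sends shapes ending in a row to shapes ending in a column, but since we are comparing ``no $(k+1)$-crossing'' with ``no $(k+1)$-nesting'' on open diagrams with $m$ open arcs, and the row-length $m$ is controlled on the $\phi$-side by the number of surviving $\infty$-entries, which transposition turns into a column of length $m$, we still land among tableaux ending in a row of length $m$ (one composes with a further reflection if needed, or simply reroutes (2) through (3) by the same extended-$\phi$ argument with rows replaced by columns). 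I expect the main obstacle to be precisely this bookkeeping in the open case: verifying that the open-arc markers interact correctly with RSK and with the crossing/nesting patterns near the segments $[i,i+1]$, i.e.\ proving the open-diagram analogue of Proposition~\ref{prop:below}, so that the height bound $k$ is faithfully transported and the count of open arcs is faithfully transported to the length $m$ of the terminal row. The rest is either definitional (the (3)$\leftrightarrow$(4) edge) or a direct citation of the symmetry machinery from Section~\ref{sec:cddsy}.
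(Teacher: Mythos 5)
Your plan is correct, and in fact your ``\(\infty\)-marker'' extension of \(\phi\) is the paper's construction in different clothing: the paper appends \(m\) extra dots to the diagram and closes the open arcs onto them in \emph{reverse} order, so that the leftmost open arc receives the largest right endpoint \(n+m\), the next one \(n+m-1\), and so on. Running the ordinary Chen \emph{et al.} map \(\phi\) on this closure \(\overline{\pi}\) inserts exactly your markers \(\infty_1>\infty_2>\cdots\) (they are the genuine entries \(n+m,\dots,n+1\)), and truncating \(\phi(\overline{\pi})\) at position \(2n\) is your extended map. The payoff of making this identification explicit is that the step you flag as the main obstacle --- an open-diagram analogue of Proposition~\ref{prop:below} --- evaporates: since the appended arcs form a terminal \(m\)-nesting, every crossing of \(\overline{\pi}\) uses at most one of them, so an open \(\ell\)-crossing of \(\pi\) is precisely a plain \(\ell\)-crossing of \(\overline{\pi}\), and one simply cites Corollary~\ref{cor:classic} for the height bound and Proposition~\ref{prop:below} on the segment \([n,n+1]\) (below an \(m\)-nesting, below no \(2\)-crossing) to see that the shape at time \(2n\) has exactly \(m\) cells in a single row; no re-examination of how markers interact with RSK is needed, and the inverse is obtained by extending the truncated tableau down to \(\varnothing\), applying \(\phi^{-1}\), and reopening the last \(m\) arcs. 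Your handling of the remaining edges matches the paper: (3)\(\leftrightarrow\)(4) is definitional, and for (1)\(\leftrightarrow\)(2) the paper does exactly the rerouting you mention, composing the same construction with the transposition \(\tau\) (equivalently, for the no-\((k+1)\)-nesting class one closes the open arcs in the opposite order, creating a terminal \(m\)-crossing, so that nestings are preserved and the ending column becomes a row after transposing); your first attempt at this edge via \(\phi^{-1}\circ\tau\circ\phi\) alone is indeed muddier, since transposition moves the ending row to a column, and the reroute through (3) is the clean fix.
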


\begin{proof} 
\textbf{Bijection (1) $\boldsymbol \Leftrightarrow$
    (3).} Since we would like to use  the aforementioned bijection $\phi$, the idea here simply consists in  closing every open arc in a very natural way -- we thereby recover classic closed diagrams. 
    
  Let us be more precise. Let~$\pi$ be an open partition diagram of
  length $n$ with $m$ open arcs and no $(k+1)$-crossing. We build a
  new partition diagram $\overline \pi$ of length $n+m$ without open
  arcs by closing the $m$ open arcs of $\pi$ in decreasing order. That
  is, if $i_1 < i_2 < \dots < i_m$ denote the positions of the $m$
  open arcs of $\pi$, the partition $\overline{\pi}$ is the closure,
  obtained by replacing the $m$ open arcs with the arcs
  $(i_1,n+m),(i_2,n+m-1),\dots,(i_m,n+1)$, as shown in
  Figure~\ref{fig:expartition2}. Note that the open arcs are closed in such a
  way that no new crossing is created.

\begin{figure}[h!]
\centering

\begin{tikzpicture}[scale=0.45]
\node[label=left:{$\pi=$}] at (1,0){};
\node[label=left:{$\overline{\pi}=$}] at (12,0){};
\foreach \x in {1,2,...,9}{
\node[pnt, label=below:{$\x$}](\x) at (\x,0){};
\node[pnt, label=below:{$\x$}](b\x) at (\x+11,0){};
}
\node[pnt, label=below:{\color{blue} $10$}, color=blue](e1) at (21,0){};
\node[pnt, label=below:{\color{blue} $11$}, color=blue](e2) at (22,0){};

\node[pnt, label=below:{\color{blue} $12$}, color=blue](e3) at (23,0){};
\draw[bend left=45](2) to (4);
\draw[bend left=45](4) to (5);
\draw[bend left=45](1) to (7);
\draw[bend left=45](8) to (9);
\draw[bend left=45, color=blue](3) to (9,2.1);
\draw[bend left=45, color=blue](5) to (9,1.4);
\draw[bend left=45, color=blue](7) to (9,0.7);
\draw[bend left=45](b2) to (b4);
\draw[bend left=45](b4) to (b5);
\draw[bend left=45](b1) to (b7);
\draw[bend left=45](b8) to (b9);
\draw[bend left=45,color=blue](b3) to (e3);
\draw[bend left=45,color=blue](b5) to (e2);
\draw[bend left=45,color=blue](b7) to (e1);

\end{tikzpicture}
\caption{\textit{Left.} An open partition diagram, $\pi$,  with $3$ open
  arcs. \textit{Right.} The corresponding closed partition diagram $\overline{\pi}$ ending with a $3$-nesting,
  obtained by closing the $3$ open arcs of $\pi$  in reverse order.}
\label{fig:expartition2}
\end{figure}
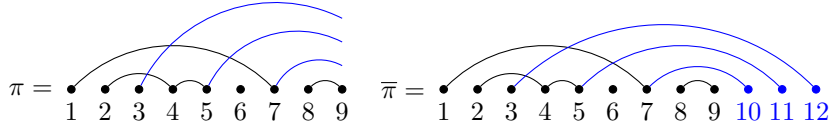

The $m$ last elements of $\overline{\pi}$ form the end of an
$m$-nesting. Consequently, each crossing of $\overline{\pi}$ has at most one element
inside \mbox{$\{n+1,\dots,n+m\}$}; so the preimage of any
$\ell$-crossing of $\overline{\pi}$ is also an~$\ell$-crossing. As~$\pi$ has no
$(k+1)$-crossing, the diagram $\overline{\pi}$ has no~$(k+1)$-crossing.

Let $\phi(\overline{\pi}) = (\lambda_0,\dots,\lambda_{2(n+m)})$ be the image of
$\overline{\pi}$ under $\phi$. By Corollary~\ref{cor:classic}, the height of
this vacillating tableau is bounded by~$k$. Moreover, the segment
$[n,n+1]$ in $\overline{\pi}$ is below an $m$-nesting but not below a
$2$-crossing. By Proposition~\ref{prop:below}, it means that
$\lambda_{2n}$ is a column with at least $m$ rows. Since $\phi(\overline{\pi})$ ends with an empty diagram and one can delete at most one cell every two steps,
$\lambda_{2n}$ has exactly $m$ rows. Thus,
$(\lambda_0,\dots,\lambda_{2n})$ is a vacillating tableau of
length~$2n$, with maximum height bounded by~$k$, ending in a column of
length $m$.
  
The transformation is bijective: a vacillating
tableau~$(\lambda_0,\dots,\lambda_{2n})$ from the set~(3) can be
concatenated with
$((m-2), (m-2), \dots, (1),\varnothing, \varnothing)$, where $(j)$
denotes the partition of $i$ only composed of a single part of size
$j$. If we change its preimage under $\phi$ by opening the arcs ending in
$\{n+1,\dots,n+m\}$ into~$m$ open arcs, we recover
the initial open diagram $\pi$.
  
\textbf{Bijection (2) $\boldsymbol \Leftrightarrow$ (3).}  The
previous bijection is adapted with an additional application of the
transposition operator $\tau$.

\textbf{Bijection (3) $\boldsymbol \Leftrightarrow$ (4).} This is a
straightforward consequence of the encoding. As the vacillating tableaux end at a row of length~$m$,
the endpoints of the walks must be the point $(m,0,\dots,0)$
\end{proof}

The open diagram case inherits many properties from the closed diagram
case. For example, the statistics of crossing level and nesting level
are equidistributed. Also, the problem of finding a direct bijection
between open partitions with no $k$-crossing and open partitions with
no $k$-nesting without going through the vacillating tableaux seems to
be as difficult as the closed case. 

However,  the nesting level and the crossing level do not
have symmetric joint distribution for open partitions. This constitutes a
difference with the (closed) partition diagrams. 

Furthermore, the other generalizations of Chen~\emph{et al.}  --
specifically the ones that concern the hesitating and oscillating
tableaux -- can also be extended to tableaux ending at a row shape,
and open partitions. The proofs are similar.

\begin{prop} \label{prop:om}
The following classes are in bijection:
\begin{enumerate}
\item the set of open matching diagrams of length $n$ with no~$(k+1)$-crossing, with $m$ open arcs;
\item the set of open matching diagrams of length $n$ with no~$(k+1)$-nesting, with $m$ open arcs;
\item the set of oscillating tableaux of length~$n$, with height bounded by~$k$, ending in
  a row of length $m$;
\item the set of $W_k$-oscillating walks of length $n$ ending
  at $(m, 0, \dots, 0)$.
\end{enumerate}
\end{prop}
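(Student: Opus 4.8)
The plan is to mirror the proof of Proposition~\ref{prop:bij}, with the bijection $\phi$ replaced by the version of the Chen \emph{et al.} construction for matchings and oscillating tableaux mentioned earlier: a matching of $\{1,\dots,N\}$ is processed dot by dot, inserting (adding a box) at a left endpoint and deleting (removing a box) at a right endpoint, which produces an oscillating tableau of length $N$ from $\varnothing$ to $\varnothing$. I take as known the matching analogues of Proposition~\ref{prop:below} and Corollary~\ref{cor:classic}: the segment $[i,i+1]$ of a matching is below a $k$-crossing (resp.\ $k$-nesting) if and only if the shape reached after the first $i$ steps has at least $k$ rows (resp.\ columns); in particular the matching has no $(k+1)$-crossing (resp.\ $(k+1)$-nesting) if and only if no shape of its oscillating tableau has $k+1$ rows (resp.\ columns). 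The bijection $(3)\Leftrightarrow(4)$ is then immediate: encode each shape by its vector of parts, and observe that an oscillating tableau ending at the row $(m)$ corresponds to a $W_k$-oscillating walk ending at $(m,0,\dots,0)$ (the parity constraint $n\equiv m\pmod 2$ is automatic on both sides).

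For $(1)\Leftrightarrow(3)$: given an open matching $\pi$ of length $n$ with $m$ open arcs, at positions $i_1<\dots<i_m$, and with no $(k+1)$-crossing, close the open arcs \emph{in decreasing order}, adding dots $n+1,\dots,n+m$ and arcs $(i_1,n+m),(i_2,n+m-1),\dots,(i_m,n+1)$; this is a legitimate closed matching of $\{1,\dots,n+m\}$, since in an open matching the left endpoint of an open arc is never a right endpoint. The $m$ new arcs are mutually nesting, and a short case check shows that any crossing of the closure uses at most one new arc, with that arc supplying the rightmost left endpoint; hence each $\ell$-crossing of the closure corresponds to an $\ell$-crossing of the open diagram $\pi$, so the closure has no $(k+1)$-crossing. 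Running the matching version of $\phi$ gives an oscillating tableau of length $n+m$ and height bounded by $k$; since dots $n+1,\dots,n+m$ are right endpoints, its last $m$ steps are deletions, so the shape $\lambda_n$ reached after $n$ steps has exactly $m$ cells, while $[n,n+1]$ lies below the $m$-nesting formed by the new arcs, so $\lambda_n$ has at least $m$ columns. Together these force $\lambda_n=(m)$, so $(\lambda_0,\dots,\lambda_n)$ is the desired oscillating tableau. Conversely, append the deletions $(m)\to(m-1)\to\dots\to(1)\to\varnothing$, apply $\phi^{-1}$, and re-open the $m$ arcs ending in $\{n+1,\dots,n+m\}$; the partners of those dots lie in $\{1,\dots,n\}$ and are never right endpoints, so this produces a genuine open matching, and one checks it inverts the construction.

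The bijection $(2)\Leftrightarrow(3)$ is the same construction with one extra transposition, exactly as in Proposition~\ref{prop:bij}. For an open matching with no $(k+1)$-nesting one closes the open arcs \emph{in increasing order}, adding arcs $(i_1,n+1),\dots,(i_m,n+m)$; now the new arcs mutually cross, any nesting of the closure uses at most one new arc, necessarily as its outermost arc, and no $(k+1)$-nesting is created. Applying $\phi$ gives an oscillating tableau of \emph{width} bounded by $k$ whose shape after $n$ steps has $m$ cells and at least $m$ rows, i.e.\ the column $(1^m)$; transposing every shape in the sequence yields an oscillating tableau of height bounded by $k$ ending in the row $(m)$, and this composite map is bijective.

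Everything above runs essentially word for word as in the vacillating case. The only step with genuine content is the matching/oscillating-tableau analogue of Proposition~\ref{prop:below} and Corollary~\ref{cor:classic}, namely setting up the dot-by-dot insertion/deletion process and proving its compatibility with crossings and nestings; since that is part of the Chen \emph{et al.} machinery invoked here, I expect no new difficulty, and the remaining care needed is simply the bookkeeping showing that the two closures introduce no forbidden pattern and are reversible.
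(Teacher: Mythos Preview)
Your proposal is correct and follows exactly the route the paper intends: the paper does not spell out a separate proof of this proposition but simply says the argument is ``similar'' to that of Proposition~\ref{prop:bij}, and you have filled in those details faithfully. Your handling of $(2)\Leftrightarrow(3)$, closing the open arcs in \emph{increasing} order so as to create an $m$-crossing rather than an $m$-nesting before transposing, is the right way to make the paper's terse ``additional application of $\tau$'' precise.
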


\begin{prop} \label{prop:ep}
The following classes are in bijection:
\begin{enumerate}
\item the set of open partition diagrams of length $n$ with no enhanced ~$(k+1)$-crossing, with $m$ open arcs;
\item the set of open partition diagrams of length $n$ with no
  enhanced~$(k+1)$-nesting, with $m$ open arcs;
\item the set of hesitating tableaux of length~$2n$, with  height bounded by~$k$, ending in
  a row of length $m$;
\item the set of $W_k$-hesitating walks of length $2n$ ending
  at $(m, 0, \dots, 0)$.
\end{enumerate}
\end{prop}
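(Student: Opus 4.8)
The plan is to mirror the proof of Proposition~\ref{prop:bij} almost verbatim, replacing the bijection $\phi$ by its analogue $\phi_h$ for hesitating tableaux (also due to Chen \emph{et al.}~\cite{Chetal07}) and replacing every crossing and nesting by its \emph{enhanced} counterpart. The two inputs I would take from~\cite{Chetal07} are: (a) $\phi_h$ is a bijection between set partitions of size $N$ and hesitating tableaux of length $2N$ from $\varnothing$ to $\varnothing$; and (b) the enhanced analogues of Proposition~\ref{prop:below} and Corollary~\ref{cor:classic}, namely that the segment $[i,i+1]$ of $\pi$ lies below an enhanced $k$-crossing (resp.\ enhanced $k$-nesting) if and only if $\lambda_{2i}$ in $\phi_h(\pi)$ has at least $k$ rows (resp.\ $k$ columns), so that $\pi$ has no enhanced $(k+1)$-crossing (resp.\ nesting) if and only if no diagram of $\phi_h(\pi)$ has $k+1$ rows (resp.\ columns). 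I would also record that the transposition operator $\tau$ sends hesitating tableaux to hesitating tableaux, since transposition preserves the containment order and the cover relations of Young's lattice.

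For the equivalence (1)~$\Leftrightarrow$~(3), given an open partition diagram $\pi$ of length $n$ with open arcs at positions $i_1<\dots<i_m$ and no enhanced $(k+1)$-crossing, I would form the closed diagram $\overline{\pi}$ of length $n+m$ by closing these arcs in decreasing order, i.e.\ turning them into $(i_1,n+m),(i_2,n+m-1),\dots,(i_m,n+1)$. These new arcs pairwise nest, form an $m$-nesting, and are exactly the arcs spanning the segment $[n,n+1]$. Since two pairwise-nesting arcs cannot both lie in one enhanced crossing, every enhanced crossing of $\overline{\pi}$ meets $\{n+1,\dots,n+m\}$ in at most one point, whence the preimage in $\pi$ of an enhanced $\ell$-crossing of $\overline{\pi}$ is again an enhanced $\ell$-crossing, and $\overline{\pi}$ has no enhanced $(k+1)$-crossing. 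By input (b), $\phi_h(\overline{\pi})=(\lambda_0,\dots,\lambda_{2(n+m)})$ has height bounded by $k$; and since $[n,n+1]$ lies below the $m$-nesting but below no enhanced $2$-crossing, $\lambda_{2n}$ has at least $m$ columns and at most one row. As a hesitating tableau loses at most one cell per pair of steps and must reach $\varnothing$ in the remaining $2m$ steps, $\lambda_{2n}$ is exactly a row of length $m$, so $(\lambda_0,\dots,\lambda_{2n})$ is the required hesitating tableau. I would invert the construction by appending the canonical descending continuation $(m-1),(m-1),(m-2),(m-2),\dots,(1),(1),\varnothing,\varnothing$, applying $\phi_h^{-1}$, and reopening the $m$ arcs terminating in $\{n+1,\dots,n+m\}$.

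For (2)~$\Leftrightarrow$~(3) I would repeat this, but close the open arcs in \emph{increasing} order, so the new arcs $(i_1,n+1),\dots,(i_m,n+m)$ pairwise cross and form an $m$-crossing spanning $[n,n+1]$; then $\overline{\pi}$ has no enhanced $(k+1)$-nesting, $\phi_h(\overline{\pi})$ has no diagram with $k+1$ columns, and $\lambda_{2n}$ is a column of length $m$. Applying $\tau$ gives a hesitating tableau of height bounded by $k$ whose entry in position $2n$ is a row of length $m$; truncating after $2n$ steps yields the desired object, and the map is reversed as before with one extra transposition. Finally, (3)~$\Leftrightarrow$~(4) is just the encoding of each Ferrers diagram by its vector of parts, as in Proposition~\ref{prop:bij}: hesitating tableaux of height bounded by $k$ correspond to $W_k$-hesitating walks, and a terminal row of length $m$ to the endpoint $(m,0,\dots,0)$.

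The main obstacle I anticipate is the careful handling of \emph{enhanced} patterns under the closure operation: one has to check that closing the $m$ open arcs introduces no new enhanced crossing or nesting and that preimages of enhanced patterns stay enhanced, paying special attention to the boundary cases in which two endpoints coincide (an isolated point, or $i_k=j_1$) --- these are precisely the situations that separate enhanced patterns from plain ones and do not arise in the proof of Proposition~\ref{prop:bij}. Everything else is a routine transcription of the vacillating argument, once the hesitating-tableau bijection and its enhanced crossing/nesting dictionary from~\cite{Chetal07} are in hand.
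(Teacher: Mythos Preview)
Your proposal is correct and takes essentially the same approach as the paper: the paper's entire proof of Proposition~\ref{prop:ep} is the sentence ``The proofs are similar'' (to Proposition~\ref{prop:bij}), so your plan of transcribing that argument with $\phi_h$ in place of $\phi$ and enhanced patterns in place of plain ones is exactly what is intended. Your version is in fact more detailed than the paper's, and you correctly isolate the one place where genuine care is needed beyond the vacillating case---verifying that the closure operation interacts properly with the \emph{enhanced} inequalities $i_k\le j_1$ and $i_k\le j_k$.
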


\section{Young tableaux, involutions and open matchings}
\label{sec:YTBH}
\subsection{Bijections}
\label{sec:YT-biject}

We can now prove our first main result, namely  Theorem~\ref{thm:SYT}. Our
strategy is to use Proposition~\ref{prop:om}, and prove the following
result, from which Theorem~\ref{thm:SYT} is a straightforward
consequence.

\begin{proposition}
\label{thm:VACC}
The set of standard Young tableaux of size
$n$ with height bounded by $2k$ and $m$ odd columns 
are in bijection with the set of open
matching diagrams of length $n$,  with $m$ open arcs and with no
$(k+1)$-crossing.
\end{proposition}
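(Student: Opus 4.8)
The plan is to route through involutions via the Robinson--Schensted correspondence, translate the height bound into a nesting condition, and then move from a nesting condition to the required crossing condition. Recall first that a standard Young tableau $T$ of size $n$ is the same datum as the pair $(T,T)$, so Robinson--Schensted identifies $T$ with an involution $\sigma$ of $\{1,\dots,n\}$. Under this identification Sch\"utzenberger's theorem says the number of odd-length columns of $\operatorname{shape}(T)$ equals the number of fixed points of $\sigma$, and Schensted's theorem says $\operatorname{shape}(T)$ has at most $2k$ rows if and only if $\sigma$ has no decreasing subsequence of length $2k+1$. So the left-hand class of the statement is in bijection with the set of involutions of $\{1,\dots,n\}$ having exactly $m$ fixed points and no decreasing subsequence of length $2k+1$, and what remains is to match these with open matchings of length $n$ with $m$ open arcs and no $(k+1)$-crossing.

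The next step is the dictionary between decreasing subsequences of $\sigma$ and nestings of its arc diagram (a matching on $n-m$ points together with $m$ isolated points). Here the key observation is purely structural: in any decreasing subsequence of $\sigma$ the positions that are openers ($\sigma(p)>p$) come first, then at most one fixed point, then the closers ($\sigma(p)<p$); the $a$ opener positions $o_1<\dots<o_a$ give $a$ mutually nesting arcs $(o_i,\sigma(o_i))$, the $b$ closer positions give another $b$ mutually nesting arcs, and — when present — the fixed point lies strictly inside all of these arcs. Pushing this observation both ways shows that $\sigma$ has a decreasing subsequence of length $2k+1$ exactly when the set partition underlying $\sigma$, with each fixed point drawn as a loop, has an enhanced $(k+1)$-nesting (an $\ell$-nesting of arcs contributes a decreasing subsequence of length $2\ell$, and such a nesting with a loop inside all arcs contributes length $2\ell+1$). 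Thus the left-hand class is equinumerous with the involutions having $m$ fixed points and $\enhne\le k$.

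It then remains to exchange the condition $\enhne\le k$ for the condition defining the right-hand class. By Proposition~\ref{prop:om} (items (1) $\Leftrightarrow$ (2)) the right-hand class has the same size as the open matchings of length $n$ with $m$ open arcs and no $(k+1)$-nesting, and re-reading each isolated point of an involution as the left endpoint of an open arc, the latter is precisely the set of involutions with $m$ fixed points and $\futne\le k$. So the core of the proof is an explicit bijection on involutions of $\{1,\dots,n\}$, preserving the set of fixed points, that interchanges the two nesting statistics $\enhne$ (where a fixed point contributes as the innermost block of a nest) and $\futne$ (where a fixed point contributes as an open arc to the left of a nest); restricting it to $\enhne\le k$ and composing with the nesting/crossing equivalence of Proposition~\ref{prop:om} (and, if one prefers to stay on the arc-diagram side, with $\phi^{-1}\circ\tau\circ\phi$, which by the Remark after Corollary~\ref{cor:classic} preserves the opener/closer profile) yields the claimed bijection. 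As a by-product this exchange map gives the symmetric joint distribution of $\futne$ and $\enhne$ over involutions of a fixed size announced after Theorem~\ref{thm:SYT}.

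The main obstacle is constructing that exchange bijection and proving it really swaps $\enhne$ and $\futne$, i.e. controlling how it moves the fixed points while leaving the crossing/nesting behaviour of the genuine arcs intact; this is where the growth-diagram/insertion machinery underlying $\phi$ (and its enhanced variant behind Proposition~\ref{prop:ep}) has to be reworked so that transposition of Ferrers diagrams translates into the loop-versus-open-arc exchange with the correct orientation. By contrast the dictionary of the second paragraph is routine once the opener/fixed-point/closer decomposition is noticed, and the RSK step of the first paragraph is standard. The final check is consistency: one must confirm that after the three steps the image is exactly the set of open matchings of length $n$ with $m$ open arcs and no $(k+1)$-crossing described in the statement, which is most safely done by comparing against the chain of bijections in Proposition~\ref{prop:om}.
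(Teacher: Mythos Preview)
Your first two steps---RSK followed by the dictionary between decreasing subsequences of length $2k+1$ and enhanced $(k+1)$-nestings---match the paper exactly (these are Lemma~\ref{lem:rsk} and Lemma~\ref{l:involutionnesting}). The divergence is in the third step, and there you have both taken a longer route and left it incomplete.

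You reduce the problem to building a bijection on involutions that swaps $\enhne$ and $\futne$, and you correctly flag this as the main obstacle---but you do not actually build it. The paper does eventually construct such a swap (Proposition~15), but only \emph{after} Proposition~\ref{thm:VACC} is proved, and the construction is indirect: it iterates a map $\theta$ whose definition already relies on the bijection $\psi$ that proves Proposition~\ref{thm:VACC}. So routing the proof of Proposition~\ref{thm:VACC} through an $\enhne\leftrightarrow\futne$ swap is, in the paper's logic, circular; and even independently of that, it is harder than what is needed.

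The paper's direct route is to set $\psi=\iota\circ\phi^{-1}\circ\tau\circ\phi$, where $\iota$ simply turns each isolated dot into an open arc, and to show that $\psi$ carries enhanced $\ell$-nestings in the involution to $\ell$-crossings in the open matching. The point you are missing is that no separate ``exchange'' map is needed: $\phi^{-1}\circ\tau\circ\phi$ already swaps plain nesting and crossing levels (Remark after Corollary~\ref{cor:classic}), and the only extra case is an enhanced nesting whose innermost ``arc'' is a fixed point $i_\ell$. For that case one uses Proposition~\ref{prop:below}: the segment $[i_\ell,i_\ell+1]$ lies below an $(\ell-1)$-nesting, so $\lambda_{2i_\ell}$ has $\ell-1$ columns, hence after transposition $\ell-1$ rows, hence $[i_\ell,i_\ell+1]$ lies below an $(\ell-1)$-crossing in $\phi^{-1}\circ\tau\circ\phi(\alpha)$; together with the open arc at $i_\ell$ created by $\iota$, this is precisely an $\ell$-crossing in the open-diagram sense. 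That local check replaces the global $\enhne\leftrightarrow\futne$ bijection you were aiming for.
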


As far as we can tell, this theorem was first conjectured by
Burrill~\cite{Burr14}\footnote{More precisely, this conjecture used
  open matchings with no~$(k+1)$-nesting.}.  Our proof uses the
Robinson-Schensted-Knuth (RSK) correspondence, and the bijection of
Chen~\emph{et al.}.

A different proof was communicated to us by Christian
Krattenthaler~\cite{Krat15}.  It relies on the RSK correspondence like
our proof, but also on \textit{jeu de taquin} (an operation on Young
tableaux invented by Sch\"utzenberger~\cite{Schu76}). We note that the two bijections
differ: our bijection has the
advantage of preserving -- just like the Chen \emph{et al.\/} construction --
the ``opener/closer" sequence (in a formulation using diagrams on both
sides of the bijection; cf Lemma \ref{lem:psi} for more details), a strong property which does not clearly appear in
 Krattenthaler's alternative. His proof passes through growth
diagrams~\cite{Krat06}.

The following lemma presents a classic property of the RSK correspondence.

\begin{lemma} (Robinson-Schensted-Knuth correspondence)
\label{lem:rsk}
The set of standard Young tableaux of size
$n$ with height bounded by $k$ and $m$ odd columns is in bijection with involutions of size $n$ with
$m$ fixed points and no decreasing subsequence of length $k+1$.
\end{lemma}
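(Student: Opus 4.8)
The plan is to prove Lemma~\ref{lem:rsk} by a direct application of the Robinson--Schensted correspondence restricted to involutions, together with Schensted's theorem on the shape of the tableau. Recall that the RSK correspondence, applied to a permutation $w$ of size $n$, produces a pair $(P,Q)$ of standard Young tableaux of the same shape $\mu \vdash n$. The classical Schensted theorem states that the number of rows of $\mu$ equals the length of the longest decreasing subsequence of $w$, and the number of columns equals the length of the longest increasing subsequence. The key specialization, due to Schützenberger, is that $w$ is an involution if and only if $P = Q$; hence involutions of size $n$ are in bijection with single standard Young tableaux of size $n$, by $w \mapsto P(w)$.

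First I would set up the bijection: send an involution $w$ to $P = P(w)$. By Schützenberger's result this is a bijection from involutions of size $n$ onto standard Young tableaux of size $n$. Next I would track the two statistics under this map. By Schensted's theorem, $w$ has no decreasing subsequence of length $k+1$ precisely when the shape $\mu = \mathrm{sh}(P)$ has at most $k$ rows, i.e.\ $P$ has height bounded by $k$. Finally I would handle the fixed-point/odd-column correspondence: it is a classical fact (again going back to Schützenberger, and provable via the ``bumping'' or equivalently the growth-diagram description of RSK) that the number of fixed points of an involution $w$ equals the number of columns of $\mathrm{sh}(P)$ of odd length. Combining these three facts gives exactly the claimed bijection between standard Young tableaux of size $n$ with height bounded by $k$ and $m$ odd columns, and involutions of size $n$ with $m$ fixed points and no decreasing subsequence of length $k+1$.

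The only genuinely non-routine ingredient is the statement that fixed points of $w$ correspond to odd columns of the shape. I would either cite it directly (it is standard, e.g.\ in Stanley's \emph{Enumerative Combinatorics} or Sagan's \emph{The Symmetric Group}), or sketch the quick argument: the number of fixed points of $w$ equals $n$ minus twice the number of $2$-cycles, and under the sign-reversing/evacuation symmetry one checks that a $2$-cycle contributes a pair of equal-length columns while a fixed point contributes a single extra cell to an existing column parity, so the parity count of the columns is preserved; alternatively, one observes via the Lindström--Gessel--Viennot or growth-diagram formulation that the descents coming from fixed points are exactly those that change column parity. I expect this bookkeeping of parities to be the main obstacle, though it is entirely classical; everything else is an immediate invocation of RSK plus Schensted's theorem.

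Since Lemma~\ref{lem:rsk} is described in the text as ``a classic property of the RSK correspondence,'' it would also be acceptable — and probably preferable for the flow of the paper — to state it with a one-line proof that simply assembles the three cited facts (Schützenberger's characterization of involutions, Schensted's theorem on longest decreasing subsequences, and the fixed-points-to-odd-columns identity) rather than reproving any of them from scratch.
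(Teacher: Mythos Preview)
Your proposal is correct and in fact more detailed than what the paper does: the paper states Lemma~\ref{lem:rsk} as ``a classic property of the RSK correspondence'' and gives no proof at all. Your final paragraph anticipates exactly this, and the three ingredients you identify (Sch\"utzenberger's $P=Q$ characterization of involutions, Schensted's theorem on the number of rows, and the fixed-points/odd-columns identity) are precisely the standard facts one would cite.
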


\ytableausetup{nosmalltableaux}
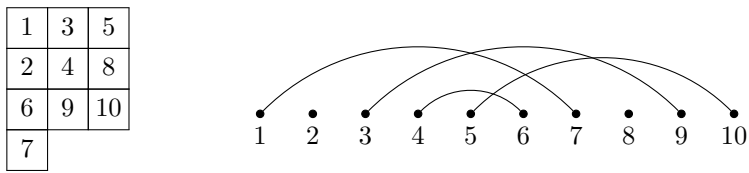
\begin{figure}[h!]
$\yt{1 & 3 & 5 \\ 2 & 4 & 8 \\ 6 & 9 & 10 \\ 7}$\quad \quad \quad
\begin{minipage}{0.45 \textwidth}
\center
\begin{tikzpicture}[scale = 0.7]
\foreach \x in {1,2,...,10}{
\node[pnt, label=below:{$\x$}](\x) at (\x,0){};}
\draw(1)  to [bend left=45] (7);
\draw(3)  to [bend left=45] (9);
\draw(5)  to [bend left=45] (10);
\draw(4)  to [bend left=45] (6);
\end{tikzpicture}
\end{minipage}

\caption{%
\textit{Left.\/} A standard Young tableau $Y$ of size $10$.
\textit{Right.\/} The arc diagram representation of the involution $(1\ 7)(3\ 9)(4\ 6)(5\ 10)$. This involution is the image of $(Y,Y)$ under the RSK correspondence.}
\label{fig:correspondance}
\end{figure}

As a first step, Lemma~\ref{lem:rsk}  yields combinatorial objects that
are close to open matchings. Indeed, involutions have a very
natural arc diagram representation: cycles $(i\,j)$ are represented by
an arc, and fixed points are isolated dots. An example is shown in
Figure~\ref{fig:correspondance}.  We can map involutions into the set
of open matchings by simply changing every isolated point into an open
arc. Under this map, there is a simple correspondence between
decreasing sequences in an involution and nestings in the open
diagram.

\begin{lemma} 
\label{l:involutionnesting}
Let $k \in \mathbb Z_{\geq 1}$. An involution has no decreasing
subsequence of length $2k+1$ if and only if there is no enhanced
$k$-nesting in its arc diagram representation.
\end{lemma}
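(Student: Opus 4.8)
The plan is to prove the equivalent, unnegated statement: an involution $\sigma$ of $\{1,\dots,n\}$ has a decreasing subsequence of length $2k+1$ if and only if its arc diagram $D$ contains an enhanced $(k+1)$-nesting, where $D$ draws the $2$-cycle $(i\,j)$, $i<j$, as an arc and each fixed point as an isolated dot. Negating both sides then gives the lemma.

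The structural input I would isolate first is an observation about decreasing subsequences of an involution. Classify a position $p$ as an \emph{opener}, a \emph{fixed point}, or a \emph{closer} according as $p<\sigma(p)$, $p=\sigma(p)$, or $p>\sigma(p)$. In any decreasing subsequence $p_1<\dots<p_L$ of $\sigma$, all openers precede all closers and there is at most one fixed point, which then sits between them: a closer $p_i$ occurring before an opener $p_j$ would give $\sigma(p_i)<p_i<p_j<\sigma(p_j)$, contradicting $\sigma(p_i)>\sigma(p_j)$, and two fixed points $p_i<p_j$ would give $\sigma(p_i)=p_i<p_j=\sigma(p_j)$, again contradicting that the subsequence decreases. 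So a decreasing subsequence of length $L$ splits as $a$ openers $O_1<\dots<O_a$, then $\epsilon\in\{0,1\}$ fixed points, then $b$ closers $R_1<\dots<R_b$, with $a+\epsilon+b=L$. A companion remark, equally quick: the $a$ opener arcs $(O_i,\sigma(O_i))$ already form a plain $a$-nesting, since the subsequence forces $\sigma(O_1)>\dots>\sigma(O_a)$ with each $O_i<\sigma(O_i)$, hence $O_1<\dots<O_a<\sigma(O_a)<\dots<\sigma(O_1)$; symmetrically the $b$ closer arcs form a plain $b$-nesting.

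Granting this, for ``decreasing subsequence $\Rightarrow$ enhanced nesting'' I would take a decreasing subsequence of length $2k+1$ and distinguish cases. If $\epsilon=0$ then $a+b=2k+1$, so $\max(a,b)\ge k+1$ and the opener or closer arcs furnish a plain, hence enhanced, $(k+1)$-nesting. If $\epsilon=1$ and still $\max(a,b)\ge k+1$, the same applies. Otherwise $\epsilon=1$ and $a=b=k$, and the unique fixed point $f$ satisfies $O_k<f$ (it follows the openers in position) and $f<\sigma(O_k)$ (its value $f$ follows $\sigma(O_k)$ among the decreasing values), so $O_1<\dots<O_k<f<\sigma(O_k)<\dots<\sigma(O_1)$ exhibits an enhanced $(k+1)$-nesting with $f$ innermost. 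For the converse I would read decreasing subsequences directly off a nesting: a plain $(k+1)$-nesting $(i_1,j_1),\dots,(i_{k+1},j_{k+1})$ with $i_1<\dots<i_{k+1}<j_{k+1}<\dots<j_1$ carries, at positions $i_1,\dots,i_{k+1},j_{k+1},\dots,j_1$, the strictly decreasing values $j_1,\dots,j_{k+1},i_{k+1},\dots,i_1$ (length $2k+2$); an enhanced $(k+1)$-nesting through an isolated dot $f$, with arcs $(i_1,j_1),\dots,(i_k,j_k)$ and $i_1<\dots<i_k<f<j_k<\dots<j_1$, carries at $i_1,\dots,i_k,f,j_k,\dots,j_1$ the strictly decreasing values $j_1,\dots,j_k,f,i_k,\dots,i_1$ (length $2k+1$).

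The one genuinely delicate step is the structural claim of the second paragraph — that within a decreasing subsequence of an involution the openers come first, then at most one fixed point, then the closers. Everything downstream (the observation that opener arcs nest, and the short case split) is then bookkeeping.
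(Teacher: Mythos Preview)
Your argument is correct. One bookkeeping point first: you have (correctly) shifted the index, proving that a decreasing subsequence of length $2k+1$ is equivalent to an enhanced $(k+1)$-nesting, whereas the lemma as printed says enhanced $k$-nesting. The paper's own proof makes the same shift (it argues with $2k-1$ and an enhanced $k$-nesting throughout), so the intended statement is the one you establish; just be explicit that you are correcting a typo rather than claiming the negation of your statement literally matches the printed one.

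On the approach: the direction ``nesting $\Rightarrow$ decreasing subsequence'' is identical to the paper's---both simply read off positions and values from the nesting. For the converse the paper is more direct: given a decreasing subsequence $i_1<\cdots<i_{2k-1}$ it looks only at the middle term and splits on whether $\alpha(i_k)\geq i_k$ or $\alpha(i_k)\leq i_k$; in the first case $(i_1,\alpha(i_1)),\dots,(i_k,\alpha(i_k))$ already form an enhanced $k$-nesting, and in the second case the last $k$ indices do. Your structural decomposition (openers, then at most one fixed point, then closers, with the opener arcs and the closer arcs each forming a plain nesting) is a pleasant observation in its own right and makes the case split transparent, but it is more machinery than strictly needed: the single pivot on the middle index bypasses the full classification. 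Both arguments are short; the paper's buys brevity, yours buys a reusable structural lemma about decreasing subsequences in involutions.
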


\begin{proof} Let~$\alpha$ be an involution. If its arc diagram
  has an enhanced $k$-nesting then $\alpha$ contains $k$
  cycles $(i_1\, j_1),\dots,(i_k\,j_k)$ that satisfy
  $i_1 < i_2 < \dots < i_{k} \leq j_{k} < \dots < j_1$, which clearly induces a decreasing subsequence
  of length $2k-1$.
  
  Conversely, assume that there exist $2k-1$ numbers
  $i_1 < i_2 < \dots < i_{2k-1}$ such that
  $ \alpha(i_{2k-1}) < \dots < \alpha(i_1)$. If
  $\alpha(i_{k}) - i_k \geq 0$, then
  $i_1 < \dots < i_k \leq \alpha(i_k) < \dots < \alpha(i_1)$: this
  means that $(i_1,\alpha(i_1)),\dots,(i_k,\alpha(i_k))$ form an
  enhanced $k$-nesting. Otherwise, $\alpha(i_{k}) - i_{k} \leq 0$.
  Thus
  $\alpha(i_{2k-1}) < \dots < \alpha(i_k) \leq i_k < \dots <
  i_{2k-1}$:
  the arcs $(\alpha(i_{2k-1}),i_{2k-1}),\dots,(\alpha(i_k),i_k)$ form
  an enhanced $k$-nesting. \end{proof}

By the two preceding lemmas, the proof of Proposition 8 is reduced to
the proof that involution diagrams of length $n$ with $m$ fixed points and
no enhanced $(k+1)$-nesting are in bijection with open matching diagrams
of length $n$ with $m$ open arcs and no $(k+1)$-crossing. This is
established by the following lemma.

\begin{lemma} There is a bijection $\psi$ between involution diagrams  and open matching diagrams, such that for $\alpha$ an involution 
diagram and $\beta=\psi(\alpha)$, the diagrams $\alpha$ and $\beta$ have same length, the number of fixed points in $\alpha$ is the number
of open arcs in $\beta$, and for any $\ell\geq 1$ there is an enhanced $\ell$-nesting in $\alpha$ if and only if there is an $\ell$-crossing
in $\beta$. In addition the opener/closer sequence of $\alpha$ (seeing fixed points as openers) is the same as the opener/closer sequence of $\beta$. \label{lem:psi}
\end{lemma}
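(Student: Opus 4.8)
The plan is to realize $\psi$ as a composition $\psi=\omega\circ\rho$ of two maps. Here $\rho=\phi^{-1}\circ\tau\circ\phi$ is the map on set partition diagrams obtained by transposing every Ferrers diagram of the associated vacillating tableau, and $\omega$ is the elementary operation that replaces the trivial arc at each isolated point of an involution diagram by an open arc with that same left endpoint, leaving the genuine $2$-arcs untouched. Thus $\psi(\alpha)$ is obtained by first applying $\rho$ and then ``opening'' the fixed points.

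First I would check that $\psi$ is well defined, bijective, and has the advertised bookkeeping properties. By the Remark following Corollary~\ref{cor:classic}, $\rho$ preserves the opener/closer type of each position (isolated $\mapsto$ isolated, left-only $\mapsto$ left-only, right-only $\mapsto$ right-only, left-and-right $\mapsto$ left-and-right). A set partition diagram is an involution diagram precisely when no position is simultaneously a left and a right endpoint, since a block of size $\geq 3$ always creates such a position; hence $\rho$ restricts to an involution of the set of involution diagrams, fixing the set of isolated points pointwise. The map $\omega$ is visibly a bijection from involution diagrams onto open matching diagrams, its inverse turning each open arc back into an isolated point, so $\psi$ is a bijection. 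Along the way the ground set never changes, so $\alpha$ and $\beta=\psi(\alpha)$ have the same length; the isolated points of $\alpha$ are exactly the open arcs of $\beta$, so the fixed-point count equals the open-arc count; and since $\rho$ preserves opener/closer types while $\omega$ turns an isolated point (an opener, by convention) into the left endpoint of an open arc (again an opener), $\beta$ has the same opener/closer sequence as $\alpha$. This settles every assertion of the lemma except the equivalence between enhanced $\ell$-nestings of $\alpha$ and $\ell$-crossings of $\beta$.

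For that equivalence the engine is Proposition~\ref{prop:below}: applied to $\rho=\phi^{-1}\circ\tau\circ\phi$ it shows that for every $i$, the segment $[i,i+1]$ is below a $k$-nesting in $\alpha$ if and only if it is below a $k$-crossing in $\rho(\alpha)$. I would then translate both sides of the desired equivalence into such ``segment below'' statements. An enhanced $\ell$-nesting of $\alpha$ is either a plain $\ell$-nesting among $2$-arcs, or a plain $(\ell-1)$-nesting among $2$-arcs together with an isolated point $p$ lying strictly between the innermost left endpoint and the innermost right endpoint of that nesting; the first case is witnessed by the unit segment immediately after the innermost left endpoint being below an $\ell$-nesting, and the second by the segment $[p-1,p]$ being below an $(\ell-1)$-nesting. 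Dually, an $\ell$-crossing of $\beta$ is either a plain $\ell$-crossing among $2$-arcs of $\rho(\alpha)$, or a plain $(\ell-1)$-crossing of $\rho(\alpha)$ together with an open arc at a former isolated point $p$ lying strictly between the last left endpoint and the first right endpoint of that crossing; these are witnessed respectively by the unit segment immediately after the last left endpoint being below an $\ell$-crossing, or by $[p-1,p]$ being below an $(\ell-1)$-crossing, of $\rho(\alpha)$. Matching these characterizations through the segment-by-segment statement for $\rho$, and using the fact that an isolated point of $\alpha$ (an opener) can never coincide with the right endpoint of a $2$-arc (a closer) — which is exactly what upgrades the weak inequalities supplied by ``segment below'' to the strict inequalities demanded by the nesting/crossing conditions — yields both directions of the equivalence for every $\ell\geq 1$.

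The step that requires the most care, and essentially the only nontrivial point, is this last translation: checking that the degenerate (``enhanced'') case, where the innermost arc of a nesting of $\alpha$ collapses to a fixed point and the matching object of $\beta$ is an open arc, is in exact correspondence with the open-diagram definitions of $\ell$-crossing and $\ell$-nesting, with every boundary inequality landing on the correct side. Everything else — well-definedness of $\rho$ on involution diagrams, bijectivity of $\psi$, and the length, opener/closer, and fixed-point bookkeeping — is routine once the Remark after Corollary~\ref{cor:classic} and Proposition~\ref{prop:below} are available.
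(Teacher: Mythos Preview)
Your proposal is correct and follows essentially the same approach as the paper: you define $\psi$ as the composition $\iota\circ\phi^{-1}\circ\tau\circ\phi$ (your $\omega$ is the paper's $\iota$, your $\rho$ is its $\phi^{-1}\circ\tau\circ\phi$), use the opener/closer preservation of $\rho$ for the bookkeeping, and invoke Proposition~\ref{prop:below} to handle the enhanced-nesting/crossing equivalence by splitting into the plain case and the degenerate (fixed-point) case. The only cosmetic difference is your choice of witnessing segment $[p-1,p]$ where the paper uses $[p,p+1]$; both work once one notes that an isolated point cannot coincide with an arc endpoint.
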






\begin{proof}[Proof of Proposition~\ref{thm:VACC}] 
  We describe~$\psi$, a bijective map between involutions and open
  matchings. It is formed as a composition of other maps. We have
  already defined $\phi$, the bijection from set partition diagrams to
  vacillating tableaux from Section~\ref{sec:cddsy}, and~$\tau$, the
  transpose action which can be applied to any tableau sequence. We
  add~$\iota$, the operation that changes every isolated dot in an
  involution diagram into an open arc. Let $\psi$ be the composition
  $\iota \circ \phi^{-1} \circ \tau \circ \phi$. Figure~\ref{fig:psi}
  shows an example of the action of~$\psi$. 

\begin{figure}
\begin{center}
\includegraphics[width=0.8\textwidth]{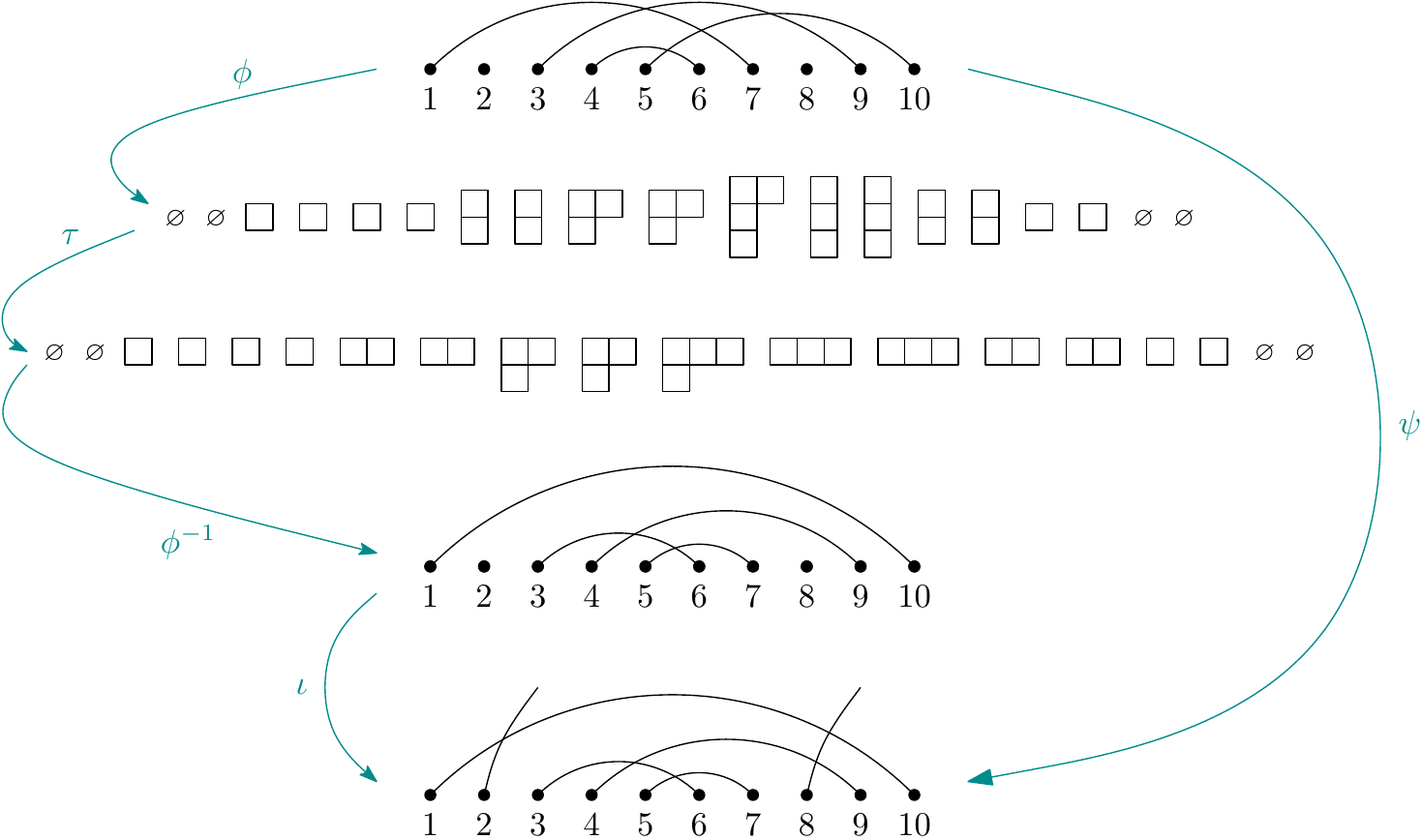}
\end{center}
\caption{Image of the involution of Figure~\ref{fig:correspondance}  under $\psi$. }
\label{fig:psi}
\mbox{}\\
{\color{gray!50} \hrule}
\end{figure}

Since $\phi$, $\tau$ and $\iota$ can all be reversed,
the mapping~$\psi$ is bijective. Moreover, recall from the end of Section~\ref{sec:cddsy}, the
mapping $\phi^{-1} \circ \tau \circ \phi$ preserves the opener-closer
sequence. Therefore, every involution of size~$n$ with~$m$ fixed
points is mapped under $\psi$ to an open matching diagram of size $n$
with $m$ open arcs. 

Assume that an involution~$\alpha$ has an enhanced $\ell$-nesting
$(i_1,j_1)\dots(i_\ell,j_\ell)$. If $i_\ell \neq j_\ell$, this enhanced nesting is
also a plain~$\ell$-nesting. By the remark at the end of
Section~\ref{sec:cddsy}, we know that
$\phi^{-1} \circ \tau \circ \phi(\alpha)$
has a $\ell$-crossing, so the same holds for $\psi(\alpha)$.

If $i_\ell = j_\ell$, then $i_\ell$ is a fixed point of $\alpha$ and hence an
open arc in $\psi(\alpha)$. Moreover, the segment $[i_\ell,i_\ell+1]$ is
below the $(\ell-1)$-nesting $(i_1,j_1)\dots(i_{\ell-1},j_{\ell-1})$. So by
Proposition~\ref{prop:below}, the $2i_\ell$th diagram of $\phi(\alpha)$
has at least~$\ell$ columns. The $2i_\ell$th diagram of
$\tau \circ \phi(\alpha)$ has then $k$ rows, and so $[i_\ell,i_\ell+1]$ is
below a $(\ell-1)$-crossing in $\psi(\alpha)$. Thus, $i_\ell$ is in
$\psi(\alpha)$ an open arc below a $(\ell-1)$-crossing: the open matching
$\psi(\alpha)$ has a~$\ell$-crossing. The converse is proved similarly.

In summary,~$\psi$ is a bijection between involution diagrams of size
$n$ with $m$ fixed points and no enhanced~$k$-nesting and open
matchings diagrams of size~$n$ with~$m$ open arcs and no
$k$-crossing.
\end{proof}

The standard Young tableau in Figure~\ref{fig:correspondance} is
mapped to the open arc diagram at the bottom of
Figure~\ref{fig:psi}. Here, the parameter $m$ takes value 2.

Remark that standard Young tableaux with height bounded by~odd numbers
are also characterized in terms of open matching diagrams (but this time constrained by the plain nestings or crossings).

\begin{prop} 
  \label{prop:yt}
The following classes are in bijection:
\begin{enumerate}
\item[(i)] the set of standard Young tableaux of size $n$ with $m$ odd columns and height bounded by~$2k-1$;
\item[(ii)] the set of involutions of size $n$ with $m$ fixed points and no decreasing subsequence of length $2k$;
\item[(iii)] the set of open matching diagrams of length $n$ with no \emph{plain}~$k$-crossing and with $m$ open arcs;
\item[(iv)] the set of open matching diagrams of length $n$ with no \emph{plain}~$k$-nesting and with $m$ open arcs.
\end{enumerate}
\end{prop}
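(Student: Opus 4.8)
The plan is to establish the four equivalences by reusing the machinery already in place, treating the plain-pattern conditions as a ``shifted'' version of the enhanced-pattern arguments. First I would note that the equivalence (i) $\Leftrightarrow$ (ii) is immediate: it is exactly Lemma~\ref{lem:rsk} applied with $k+1$ replaced by $2k$, i.e.\ Young tableaux of size $n$ with $m$ odd columns and height bounded by $2k-1$ correspond under RSK (applied to $(Y,Y)$) to involutions of size $n$ with $m$ fixed points and no decreasing subsequence of length $2k$. So the real content is to connect the involution condition ``no decreasing subsequence of length $2k$'' to the open-matching conditions in (iii) and (iv).

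For (ii) $\Leftrightarrow$ (iv) I would prove the analogue of Lemma~\ref{l:involutionnesting}, with an \emph{odd} forbidden decreasing subsequence and \emph{plain} (rather than enhanced) nestings. Concretely: an involution $\alpha$ has no decreasing subsequence of length $2k$ if and only if its arc diagram (with fixed points drawn as isolated dots, then opened into open arcs) has no plain $k$-nesting. The forward direction is the same computation as before: $k$ mutually nesting \emph{proper} arcs $(i_1,j_1),\dots,(i_k,j_k)$ with $i_1<\dots<i_k<j_k<\dots<j_1$ give the decreasing subsequence $\alpha(i_1)>\dots>\alpha(i_k)=j_k>\dots$, wait -- more carefully, they give $i_1<\dots<i_k$ with $\alpha(i_1)=j_1>\dots>\alpha(i_k)=j_k$ and then $j_k<\dots<j_1$, producing a decreasing subsequence of length $2k$ only when the $i$'s and $j$'s interleave correctly; since the nesting is plain we have $i_k<j_k$ strictly, so $i_1<\dots<i_k<j_k<\dots<j_1$ is a decreasing-image subsequence on the $i$'s reversed, giving length $2k$. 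For the converse, given a decreasing subsequence $i_1<\dots<i_{2k}$ with $\alpha(i_{2k})<\dots<\alpha(i_1)$, one splits at the ``middle'' as in the proof of Lemma~\ref{l:involutionnesting}: comparing $\alpha(i_k)$ with $i_k$ (and $\alpha(i_{k+1})$ with $i_{k+1}$) one of the two halves yields $k$ proper arcs that are mutually nesting, hence a plain $k$-nesting. The key difference from Lemma~\ref{l:involutionnesting} is that here we forbid a length-$2k$ (even) subsequence, which forces the relevant arcs to be proper, i.e.\ the nesting is plain rather than merely enhanced.

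Then (iii) $\Leftrightarrow$ (iv) follows from the bijection $\psi = \iota \circ \phi^{-1}\circ\tau\circ\phi$ of Lemma~\ref{lem:psi} and Proposition~\ref{thm:VACC}: I would check that $\psi$ sends plain $\ell$-nestings of an involution diagram to plain $\ell$-crossings of the image open matching. This is the ``$i_\ell \neq j_\ell$'' case already isolated in the proof of Proposition~\ref{thm:VACC} (there, a plain nesting maps to a plain crossing via the remark at the end of Section~\ref{sec:cddsy}; only the $i_\ell=j_\ell$ fixed-point case introduces the open arc that turns a plain pattern into an enhanced one). So restricting attention to plain patterns on both sides, $\psi$ restricts to the desired bijection between open matchings with no plain $k$-nesting and open matchings with no plain $k$-crossing; combined with (ii) $\Leftrightarrow$ (iv) this gives (ii) $\Leftrightarrow$ (iii) as well, and we can also invoke $\phi^{-1}\circ\tau\circ\phi$ directly (which swaps crossing and nesting levels and preserves the opener/closer sequence) to go between (iii) and (iv).

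The main obstacle I anticipate is purely bookkeeping: getting the parity right in the plain analogue of Lemma~\ref{l:involutionnesting} (an odd forbidden decreasing subsequence of length $2k-1$ corresponded to enhanced $k$-nestings; here an \emph{even} length $2k$ corresponds to \emph{plain} $k$-nestings), and carefully confirming that in the $i_\ell \neq j_\ell$ branch of the $\psi$-argument no stray open arc sneaks in to promote a plain pattern to an enhanced one. Once those two parity/case checks are clean, all four classes are linked: (i) $\Leftrightarrow$ (ii) by RSK, (ii) $\Leftrightarrow$ (iv) by the plain analogue of Lemma~\ref{l:involutionnesting}, and (iii) $\Leftrightarrow$ (iv) by the restriction of $\psi$ (equivalently by $\phi^{-1}\circ\tau\circ\phi$).
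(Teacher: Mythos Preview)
Your proposal is correct and follows essentially the same route as the paper: (i)$\Leftrightarrow$(ii) via RSK (Lemma~\ref{lem:rsk}), (ii)$\Leftrightarrow$(iv) via an even-length adaptation of Lemma~\ref{l:involutionnesting} (a decreasing subsequence of length $2k$ corresponds to a \emph{plain} $k$-nesting, since the parity forces all $k$ arcs to be proper), and (iii)$\Leftrightarrow$(iv) via $\phi^{-1}\circ\tau\circ\phi$ with isolated points read as open arcs. The only cosmetic difference is that you first reach for $\psi=\iota\circ\phi^{-1}\circ\tau\circ\phi$ before noting that $\phi^{-1}\circ\tau\circ\phi$ alone suffices for (iii)$\Leftrightarrow$(iv); the paper goes straight to the latter, since plain patterns ignore open arcs/isolated points and are therefore already handled by the classical crossing--nesting swap.
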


\begin{proof} The RSK correspondence (specifically, the property described in
  Lemma~\ref{lem:rsk}) gives a straightforward bijection between (i)
  and (ii). Then, seeing isolated points as open arcs, it is easy to
  adapt Lemma \ref{l:involutionnesting} in order to show the
  correspondence between (ii) and (iv). Finally the bijection between
  (iii) and (iv) is given by $\phi^{-1} \circ \tau \circ \phi$ (still by
  interpreting isolated points as open arcs), where $\phi$ and $\tau$ are
  defined in Section~\ref{sec:cddsy}.
 \end{proof}


\subsection{A new symmetric joint distribution for involutions}\mbox{}
While looking for the previous bijection we found a
surprising symmetry property for involutions, which is now presented.
Section~\ref{sec:cddsy} contained the definition of \textit{nesting
level}; in the context of involution diagrams, the notion can be refined in two different ways, 
depending on whether we regard involution diagrams as enhanced set partition diagrams or as open matchings.

 The \emph{enhanced
  nesting level\/} of an involution $\alpha$, denoted
$\enhne(\alpha)$, is the maximal number of \emph{dots} in an enhanced
nesting of~$\alpha$ (note that $k$ marks the number of dots not
number of arcs). 
Pursuant to Lemma \ref{l:involutionnesting}, the number
$\enhne(\alpha)$ is also the length of the longest decreasing
subsequence of~$\alpha$.  Similarly, we define the \emph{open nesting
  level\/} of an involution $\alpha$, denoted $\futne(\alpha)$: after
transforming the diagram of $\alpha$ into an open matching by changing
every isolated point into an open arc, the open nesting level of
$\alpha$ is the maximal number of dots inside a nesting.

Remark that an enhanced nesting and a nesting in an open diagram are
identical if they both have an even number of dots; these are then
plain nestings. The difference is made when the number of dots is odd,
say $2k+1$. In this case, an enhanced nesting is made of a dot
\emph{below} a plain $k$-nesting, while a nesting in an open matching
is made of an open arc to the \textit{left} of a plain
$k$-nesting. This justifies the notation $\enhne$ and
$\futne$. Figure~\ref{fig:enhfut} compares the two patterns.

\begin{example}
The open nesting level of the involution
$(1\ 7)(3\ 9)(4\ 6)(5\ 10)$, depicted in Figure
\ref{fig:correspondance}, is $5$: the numbers $2, 3, 4, 6, 9$ form a
nesting if we transform the dot $2$ into an open arc. However the enhanced nesting level of the same involution
is~$4$: there is no dot below any $2$-nesting. 
\end{example}

\begin{figure}[h!]

\begin{minipage}{0.33 \textwidth}
\center
\begin{tikzpicture}[scale = 0.5]
\foreach \x in {1,2,...,5}{
\node[pnt](\x) at (\x,0){};}
\draw(1)  to [bend left=45] (5);
\draw(2)  to [bend left=45] (4);
\end{tikzpicture} \\[12pt]
\begin{tikzpicture}[scale = 0.5]
\foreach \x in {1,2,...,6}{
\node[pnt](\x) at (\x,0){};}
\draw(1)  to [bend left=45] (6);
\draw(2)  to [bend left=45] (5);
\draw(3)  to [bend left=45] (4);
\end{tikzpicture}
\end{minipage} 
\begin{minipage}{0.33 \textwidth}
\center
\begin{tikzpicture}[scale = 0.5]
\foreach \x in {1,2,...,5}{
\node[pnt](\x) at (\x,0){};}
\draw(2)  to [bend left=45] (5);
\draw(3)  to [bend left=45] (4);
\draw[bend left=25,color=gray] (1)  to  (5,1);
\end{tikzpicture} \\[12pt]
\begin{tikzpicture}[scale = 0.5]
\foreach \x in {1,2,...,6}{
\node[pnt](\x) at (\x,0){};}
\draw(1)  to [bend left=45] (6);
\draw(2)  to [bend left=45] (5);
\draw(3)  to [bend left=45] (4);
\end{tikzpicture} 
\end{minipage} 

\caption{\textit{Left.} An enhanced $3$-nesting with respectively $5$
  and $6$ dots.  \textit{Right.}  A  $3$-nesting with respectively $5$
  and $6$ dots (in an open matching). }
\label{fig:enhfut}
\end{figure}
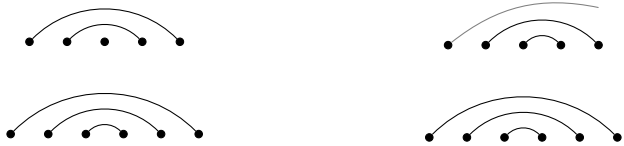

A (weak) link between the two statistics can be easily derived from the preceding study, as stated in the following proposition.

\begin{prop} There is a bijection $\theta$ from involution diagrams to involution diagrams such that for $\alpha$ any involution 
diagram, $\beta=\theta(\alpha)$, and $\ell \geq 1$, there is an enhanced
$\ell$-nesting in $\alpha$ if and only if there is an $\ell$-nesting 
in the open matching obtained by changing every fixed point in $\beta$ by an open arc.
In other words,
there exists a bijection $\theta$ between involutions $\alpha$
such that $\enhne(\alpha)=2k-1$ or $2k$ and involutions $\beta$ such
that $\futne(\beta)= 2k-1$ or $2k$.

In addition,  $\theta$ preserves the length, the  number of fixed points and the  opener/closer sequence (viewing fixed points as openers).
\label{prop:theta}
\end{prop}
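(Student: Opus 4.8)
The plan is to build $\theta$ by composing the bijections already available: $\psi$ from Lemma~\ref{lem:psi} translates the enhanced-nesting statistic on involutions into a crossing statistic on open matchings, and Proposition~\ref{prop:yt} (specifically the map $\phi^{-1}\circ\tau\circ\phi$ on open matchings, interpreting fixed points as open arcs) swaps plain crossings with plain nestings. The subtlety is that $\psi$ sends enhanced $\ell$-nestings in $\alpha$ to (possibly non-plain, i.e.\ open) $\ell$-crossings in $\beta$, whereas the map of Proposition~\ref{prop:yt} is phrased for \emph{plain} crossings versus \emph{plain} nestings. So first I would verify the analogue of Lemma~\ref{lem:psi} with the roles of crossings and nestings interchanged: there is a bijection $\psi'$ between open matching diagrams and involution diagrams such that an $\ell$-crossing in the open matching corresponds to an enhanced $\ell$-nesting in the involution, the length, the number of open arcs/fixed points, and the opener/closer sequence are all preserved. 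This $\psi'$ is simply $\psi^{-1}$ up to the same argument run in reverse — the proof of Proposition~\ref{thm:VACC} already establishes both directions of the equivalence between enhanced $\ell$-nestings and $\ell$-crossings.

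With $\psi$ and $\psi'$ in hand, the idea is to set $\theta := \psi'\circ\psi$, but where the intermediate open matching is the one we want: starting from an involution $\alpha$, apply $\psi$ to get an open matching $\beta_0$ in which $\ell$-crossings record the enhanced $\ell$-nestings of $\alpha$; then we need an involution $\beta$ whose associated open matching (fixed points $\to$ open arcs) has $\ell$-nestings exactly where $\beta_0$ has $\ell$-crossings. But an open matching \emph{is} already of the form "involution with fixed points opened," so the content is really to find the map on open matchings that swaps crossings and nestings while fixing the opener/closer data — and this is precisely $\phi^{-1}\circ\tau\circ\phi$ applied in the open-matching setting, exactly as used in the proof of Proposition~\ref{prop:yt}(iii)$\Leftrightarrow$(iv). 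I would therefore define $\theta := \iota^{-1}\circ(\phi^{-1}\circ\tau\circ\phi)\circ\psi$, where $\iota^{-1}$ turns an open matching (whose open-arc left endpoints are not right endpoints of any arc, a property preserved by $\phi^{-1}\circ\tau\circ\phi$ on matchings) back into an involution by closing each open arc into a fixed point.

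The verification then proceeds in three steps. First, well-definedness: check that $(\phi^{-1}\circ\tau\circ\phi)$ sends open matchings to open matchings and preserves the property that open-arc left endpoints are never arc right endpoints, so that $\iota^{-1}$ can be applied; this is where the matching (rather than general partition) structure and the opener/closer preservation from Lemma~\ref{lem:psi} and the remark at the end of Section~\ref{sec:cddsy} are used. Second, the statistic translation: an enhanced $\ell$-nesting in $\alpha$ becomes an $\ell$-crossing in $\psi(\alpha)$ by Lemma~\ref{lem:psi}; the map $\phi^{-1}\circ\tau\circ\phi$ turns that $\ell$-crossing into an $\ell$-nesting (the open/plain distinction is handled exactly as in the proof of Proposition~\ref{thm:VACC}, via Proposition~\ref{prop:below} and transposition of the relevant Ferrers diagram when the crossing involves an open arc); and this $\ell$-nesting in the open matching is, by construction of $\iota^{-1}$, an $\ell$-nesting in the open-matching-form of $\theta(\alpha)$. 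Reading "$\enhne(\alpha)\in\{2k-1,2k\}$" as "no enhanced $(2k+1)$-nesting but an enhanced $(2k-1)$- or $2k$-nesting" and similarly for $\futne$, the claimed equivalence of the two statistics follows. Third, the bookkeeping invariants — length, number of fixed points $=$ number of open arcs, opener/closer sequence — are each preserved by every factor ($\psi$ by Lemma~\ref{lem:psi}, $\phi^{-1}\circ\tau\circ\phi$ by the remark in Section~\ref{sec:cddsy}, $\iota^{\pm1}$ trivially), hence by $\theta$, and bijectivity is immediate since $\theta$ is a composition of bijections.

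The main obstacle I anticipate is the open/plain bookkeeping in step two: one must be careful that when the enhanced nesting in $\alpha$ has an odd number of dots (a fixed point below a plain nesting), the image under $\psi$ is a genuinely \emph{open} $\ell$-crossing, and then that $\phi^{-1}\circ\tau\circ\phi$ — which a priori is only known to swap crossings and nestings "seeing isolated points as open arcs" in the sense of Proposition~\ref{prop:yt} — actually respects the finer open-arc placement so that the output is an $\ell$-\emph{nesting} in the open matching and not something weaker. Unwinding this requires re-running the Proposition~\ref{prop:below} argument from the proof of Proposition~\ref{thm:VACC} once more, tracking the position of the open-arc endpoint relative to the $2i_\ell$-th Ferrers diagram after transposition; everything else is routine composition of established bijections.
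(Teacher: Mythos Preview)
Your approach is essentially the paper's: compose $\psi$ with a crossing-to-nesting swap on open matchings, then turn open arcs back into fixed points via $\iota^{-1}$. The paper writes this out as a six-step composition, where steps~3--5 (open matching $\to$ oscillating tableau $\to$ transpose $\to$ open matching) implement the swap and step~6 is your $\iota^{-1}$.

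The one place you make life harder than necessary is your choice of reference for the middle map. You invoke Proposition~\ref{prop:yt}(iii)$\Leftrightarrow$(iv), which only swaps \emph{plain} $k$-crossings and \emph{plain} $k$-nestings, and then correctly worry that this may not respect the open-arc cases. The paper instead cites Proposition~\ref{prop:om}, whose equivalence (1)$\Leftrightarrow$(2) is precisely the statement that open matchings with no $(k+1)$-crossing (in the open-diagram sense, open arcs included) are in bijection with open matchings with no $(k+1)$-nesting, preserving length and number of open arcs. That result already packages the open/plain bookkeeping you anticipate having to redo via Proposition~\ref{prop:below}; once you use it, your ``main obstacle'' paragraph evaporates and the proof is the two-line composition the paper gives.
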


\begin{proof} We define $\theta$ as
the composition of
\begin{enumerate}
\item the mapping $\phi$ described by Lemma \ref{lem:psi};
\item the operation that changes isolated points into open arcs; 
\item the mapping from open matchings to oscillating tableaux with bounded width ending a column (see Subsection~\ref{ss:bijop} and more precisely Proposition \ref{prop:om});  
\item the transposition of the Ferrers diagrams;
\item the mapping from oscillating tableaux with bounded height ending at a column to open matchings;
\item the operation that changes open arcs into isolated points.
\end{enumerate}
All the properties of $\theta$ presented in the statement of this proposition are direct consequences of  Lemma \ref{lem:psi} and Proposition \ref{prop:om}.
\end{proof}

Note that an enhanced nesting is preserved when the diagram is
reflected. This is not true for an odd nesting in an open diagram, because the isolated
point must be to the left of the nesting. Despite the fact they do not
share this property, the enhanced nesting level and the open nesting level
have symmetric distribution, as stated in the following theorem.

\begin{theorem} The statistics $\futne$ and $\enhne$ have a symmetric joint distribution over all the involutions of size $n$ with $m$ fixed points, i.e.,
$$\sum_{\substack{\alpha\textrm{ involution}\\\textrm{of size }n\\\textrm{with }m\textrm{ fixed points}}} x^{\futne(\alpha)} y^{\enhne(\alpha)} = \sum_{\substack{\alpha\textrm{ involution}\\\textrm{of size }n\\\textrm{with }m\textrm{ fixed points}}} y^{\futne(\alpha)} x^{\enhne(\alpha)}.$$
\label{thm:sym}
\end{theorem}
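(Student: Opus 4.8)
The plan is to leverage the machinery already assembled: Proposition~\ref{prop:om} provides a bijection between open matchings of length $n$ with $m$ open arcs and no $(k+1)$-crossing, and oscillating tableaux of length $n$ with height bounded by $k$ ending at a row of length $m$ (equivalently, the $W_k$-oscillating walks). Transposing every Ferrers diagram in such an oscillating tableau converts ``height bounded by $k$'' into ``width bounded by $k$'' (the row-shape endpoint becomes a column-shape endpoint), and this transposition is an involution on oscillating tableaux. Chaining the bijection of Proposition~\ref{prop:om} with diagram transposition and its inverse — this is exactly the map $\theta$ built in Proposition~\ref{prop:theta} — gives a length-, fixed-point-, and opener/closer-preserving bijection $\theta$ on involution diagrams. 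The crucial property, already recorded in Proposition~\ref{prop:theta}, is that $\theta$ interchanges the two statistics at the level of thresholds: there is an enhanced $\ell$-nesting in $\alpha$ iff there is an open $\ell$-nesting in $\theta(\alpha)$ (after turning fixed points into open arcs).

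First I would make the threshold statement into an exact-value statement. Proposition~\ref{prop:theta} is phrased in terms of ``$\enhne = 2k-1$ or $2k$'', but the per-$\ell$ formulation — enhanced $\ell$-nesting in $\alpha$ $\iff$ open $\ell$-nesting in $\theta(\alpha)$ for \emph{every} $\ell \geq 1$ — is precisely what is asserted and proved there. Taking the largest $\ell$ for which such a pattern exists on each side, this per-$\ell$ equivalence immediately yields $\futne(\theta(\alpha)) = \enhne(\alpha)$. So $\theta$ is a bijection on involutions of size $n$ with $m$ fixed points that satisfies $\futne(\theta(\alpha)) = \enhne(\alpha)$.

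Next, the proof is a one-line substitution. Since $\theta$ is a bijection on the set of involutions of size $n$ with $m$ fixed points, reindexing the right-hand sum by $\alpha \mapsto \theta(\alpha)$ gives
\[
\sum_{\alpha} y^{\futne(\alpha)} x^{\enhne(\alpha)}
= \sum_{\alpha} y^{\futne(\theta(\alpha))} x^{\enhne(\theta(\alpha))}.
\]
Now I still need to control $\enhne(\theta(\alpha))$, not just $\futne(\theta(\alpha))$. Here I would invoke the symmetry of $\theta$: running $\theta$ in reverse (which is again a composition of the same ingredients, since transposition is an involution) shows that $\theta^{-1}$ also satisfies the per-$\ell$ equivalence with the roles of $\enhne$ and $\futne$ swapped, i.e. $\enhne(\theta(\alpha))$ is governed by $\futne(\alpha)$ via the inverse correspondence. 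Concretely, from the per-$\ell$ equivalence applied to $\theta^{-1}$ one gets $\enhne(\theta(\alpha)) = \futne(\alpha)$. Substituting both identities into the displayed line turns the right-hand side into $\sum_\alpha y^{\enhne(\alpha)} x^{\futne(\alpha)}$, which is exactly the left-hand side of the claimed identity, completing the proof.

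The main obstacle I anticipate is the bookkeeping of the last paragraph: being careful that $\theta$ simultaneously controls \emph{both} $\enhne$ and $\futne$, rather than just one of them. The cleanest way to do this is to observe that $\theta$ is conjugation-like — it is $(\iota \circ \cdots) \circ \tau \circ (\cdots \circ \iota^{-1})$ with $\tau$ an involution — so that $\theta^{-1}$ has the same structure with the two ``sides'' of the bijection of Proposition~\ref{prop:om} interchanged; consequently the statement ``enhanced $\ell$-nesting in $\alpha$ $\iff$ open $\ell$-nesting in $\theta(\alpha)$'' is equivalent to ``open $\ell$-nesting in $\alpha$ $\iff$ enhanced $\ell$-nesting in $\theta^{-1}(\alpha)$'', and hence (reading $\alpha$ as $\theta(\alpha)$) to ``open $\ell$-nesting in $\theta(\alpha)$ $\iff$ enhanced $\ell$-nesting in $\alpha$'' is automatically symmetric — wait, that is the same statement. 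The honest fix: Proposition~\ref{prop:theta} gives $\futne(\theta(\alpha)) = \enhne(\alpha)$; applying it to $\theta^{-1}(\beta)$ in place of $\alpha$ with $\beta = \theta(\alpha)$ requires knowing $\theta^{-1}$ also satisfies such an identity, which follows because $\theta$ and $\theta^{-1}$ are built from the same symmetric data. Once this symmetry of $\theta$ is made explicit, everything else is routine summation reindexing.
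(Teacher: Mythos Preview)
Your proposal contains a genuine gap at its central step. You claim that the per-$\ell$ equivalence of Proposition~\ref{prop:theta} --- ``$\alpha$ has an enhanced $\ell$-nesting iff $\theta(\alpha)$ has an open $\ell$-nesting'' --- yields $\futne(\theta(\alpha)) = \enhne(\alpha)$ by taking the largest $\ell$. This is false, and the paper explicitly says so in the Remark immediately preceding the proof: the example $\alpha_1 = (1\ 5)(2\ 3)$ (of size $5$) is mapped by $\theta$ to $\alpha_2 = (2\ 3)(4\ 5)$, with $\enhne(\alpha_1)=4$ but $\futne(\alpha_2)=3$. The mismatch comes from the fact that $\enhne$ and $\futne$ count \emph{dots}, not arcs: if the maximal $\ell$ (number of arcs) is $k$ on both sides, this only forces $\enhne(\alpha)\in\{2k-1,2k\}$ and $\futne(\theta(\alpha))\in\{2k-1,2k\}$, and the two values within that pair need not agree. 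Your later attempt to also control $\enhne(\theta(\alpha))$ via $\theta^{-1}$ inherits the same error (and indeed your own write-up visibly stalls there).

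The paper's proof accepts this weaker conclusion and works with it. Letting $a_{i,j}$ count involutions with $\enhne=i$ and $\futne=j$, the bijection $\theta$ only gives
\[
\sum_{j} (a_{2k-1,j}+a_{2k,j}) = \sum_{i} (a_{i,2k-1}+a_{i,2k}).
\]
The key extra observation is that for any involution $|\enhne-\futne|\leq 1$ (both lie in $\{2\ell,2\ell+1\}$ where $\ell$ is the maximal plain nesting), so almost all $a_{i,j}$ vanish. Substituting this support restriction and simplifying shows that the sequence $a_{2k,2k+1}-a_{2k+1,2k}$ is constant in $k$; since it is $0$ at $k=0$, one gets $a_{2k,2k+1}=a_{2k+1,2k}$ for all $k$, which is exactly the symmetry claimed. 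Your one-line substitution cannot replace this argument because $\theta$ simply does not swap the two statistics.
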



\noindent \textbf{Remark.} The bijection $\theta$ from Proposition \ref{prop:theta} does not swap the statistics
 $\enhne$ and $\futne$. For instance, the involution
 $\alpha_1 = (1\ 5)(2\ 3)$ of size $5$ is mapped to the
 involution $\alpha_2 = (2\ 3)(4\ 5)$: we have
 $\enhne(\alpha_1)=\futne(\alpha_1) = 4$ but $\futne(\alpha_2)= 3$ and
 (even worse!) $\enhne(\alpha_2)=2$. Nonetheless, the existence of the
 function $\theta$ (and more particularly the fact that an involution with
 enhanced nesting level $2k-1$ or $2k$ is mapped under $\theta$ to an
 involution with open nesting level $2k-1$ or $2k$) is sufficient to prove
 Theorem~\ref{thm:sym}.

 \begin{proof} Consider all involutions of fixed size, with a fixed
   number of fixed points.  Let $a_{i,j}$ be the number of involutions
   $\alpha$ in this class of such that $\enhne(\alpha) = i$ and
   $\futne(\alpha) = j$. By Proposition \ref{prop:theta}, the bijection $\theta$ maps
   involutions~$\alpha$ such that $\enhne(\alpha)=2k-1$ or $2k$ to
   involutions~$\beta$ such that $\futne(\beta)= 2k-1$ or $2k$; hence
\begin{equation}\sum_{j \geq 0} a_{2k-1,j} +  a_{2k,j} = \sum_{i \geq 0} a_{i,2k-1} +  a_{i,2k}. \label{eq:a}
\end{equation}
We can simplify the expression in Equation~\eqref{eq:a} 
 as the values~$\enhne(\alpha)$ and~$\futne(\alpha)$
can only differ by at most one for a given involution $\alpha$.
Indeed, if $\ell$ denotes the maximal number of arcs inside a nesting
of an involution, the open nesting level and the enhanced nesting
level must equal either $2\ell$ or $2 \ell +1$. Therefore, $a_{i,j}=0$
except for pairs $(i,j)$ of the form $(2\ell,2\ell)$,
$(2\ell,2\ell+1)$, $(2\ell+1,2\ell)$ or $(2\ell+1,2\ell+1)$. Equation
\eqref{eq:a} can be thus rewritten as:
$$ a_{2k-1,2k-1} + a_{2k-1,2k-2} +  a_{2k,2k} +  a_{2k,2k+1}
=
a_{2k-2,2k-1} + a_{2k-1,2k-1} +  a_{2k,2k} +  a_{2k+1,2k}, 
$$
or after simplification
$$ a_{2k,2k+1} - a_{2k+1,2k} = a_{2k-2,2k-1} -  a_{2k-1,2k-2}.$$
In other words, the sequence $\left(a_{2k,2k+1} - a_{2k+1,2k}\right)$ is constant over all $k \geq 0$. But since it equals $0$ for $k=0$, we have for every $k \geq 0$,
$$a_{2k,2k+1} = a_{2k+1,2k}.$$
The other terms $a_{i,j}$ such that $i \neq j$ vanish, so the last
equality is sufficient to conclude the proof.
\end{proof}
 
The previous proof is simple but not constructive: can we describe an
involution (on involutions) that swaps the statistics $\futne$ and
$\enhne$? The answer is yes, and a description can be given in terms
of iterations of $\theta$, where $\theta$ is the mapping defined by Proposition \ref{prop:theta}.

\begin{lemma} Let $\theta^{(\ell)}$ be the $\ell$th iteration of
  $\theta$ and $A_{i,j}$ be the set of involutions $\alpha$ such that
  $\enhne(\alpha) = i$ and $\futne(\alpha) = j$.

  For every $\alpha$ in $A_{2k,2k+1}$ with $k \geq 0$, there exists $m
  \geq 1$ such that \mbox{$\theta^{(\ell)}(\alpha) \notin A_{2k,2k+1}
    \cup A_{2k+1,2k}$} for \mbox{$\ell \in \{1,\dots,m-1\}$}, and
  $\theta^{(m)}(\alpha) \in A_{2k+1,2k}$. Moreover, for every
  $\alpha'$ in $A_{2k+1,2k}$, there exists $m' \geq 1$ such that
  $\theta^{(\ell')}(\alpha') \notin A_{2k,2k+1} \cup A_{2k+1,2k}$
  for~$\ell' \in \{1,\dots,m'-1\}$, and $\theta^{(m')}(\alpha') \in
  A_{2k,2k+1}$.

  In other words, in the orbit of any involution under $\theta$ (this
  orbit is cyclic since $\theta$ is bijective and the set of
  involutions of fixed size is finite), the elements of
  $A_{2k,2k+1} \cup A_{2k+1,2k}$ alternate between $A_{2k,2k+1}$ and
  $A_{2k+1,2k}$.
\end{lemma}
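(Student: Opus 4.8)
The key observation is already half-proved in the preceding theorem: the only nonzero entries $a_{i,j}$ with $i\neq j$ are $a_{2k,2k+1}$ and $a_{2k+1,2k}$, and Proposition~\ref{prop:theta} tells us that $\theta$ sends $A_{2k-1,\bullet}\cup A_{2k,\bullet}$ (involutions with $\enhne\in\{2k-1,2k\}$) bijectively onto $A_{\bullet,2k-1}\cup A_{\bullet,2k}$ (involutions with $\futne\in\{2k-1,2k\}$). So I would first restrict attention to a single fixed value of $k$ and to a single $\theta$-orbit $O$ inside the (finite) set of involutions of size $n$ with $m$ fixed points. Within $O$, let $O_k := O\cap(A_{2k,2k+1}\cup A_{2k+1,2k})$; the goal is to show the elements of $O_k$, read around the cyclic orbit $O$, strictly alternate between $A_{2k,2k+1}$ and $A_{2k+1,2k}$.

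The plan is to encode, for each involution $\alpha$ in the orbit, its ``level type at $k$'': say $\alpha$ has $\enhne$-type $+$ at level $k$ if $\enhne(\alpha)\geq 2k+1$ (equivalently, by the classification, $\enhne(\alpha)\in\{2k+1,\dots\}$, i.e. there are $\geq k$ arcs in some enhanced nesting and the relevant diagram has the extra column) and type $-$ if $\enhne(\alpha)\leq 2k$; similarly define the $\futne$-type at level $k$ using the threshold $2k+1$. The crucial monotonicity input is Proposition~\ref{prop:theta} applied at level $k$ and at level $k{+}1$ simultaneously: because $\theta$ matches $\{\enhne\in\{2k{-}1,2k\}\}$ with $\{\futne\in\{2k{-}1,2k\}\}$ for every $k$, one deduces that $\theta$ preserves both the $\enhne$-type-at-level-$k$ and the $\futne$-type-at-level-$k$ partition of the involutions into ``$\geq 2k+1$'' versus ``$\leq 2k$'' — more precisely, $\alpha$ has $\enhne(\alpha)\geq 2k+1$ iff $\theta(\alpha)$ has $\futne(\theta(\alpha))\geq 2k+1$ is NOT what we want; rather the correct bookkeeping is that $\theta$ shifts the roles. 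I would make this precise by writing $\enhne$ and $\futne$ as step functions of the common arc-count $\ell(\alpha)$ (the max number of arcs in a nesting) together with one bit, and tracking how $\theta$ acts on that bit; this is exactly the data Proposition~\ref{prop:theta} controls.

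Once the type functions are set up, the argument is a parity/telescoping argument identical in spirit to the one proving Theorem~\ref{thm:sym}. Along the orbit $O = (\alpha_0,\alpha_1=\theta(\alpha_0),\dots,\alpha_{|O|-1},\alpha_0)$, define $\delta_t = \mathbf 1[\alpha_t\in A_{2k,2k+1}] - \mathbf 1[\alpha_t\in A_{2k+1,2k}]$, which is $0$ unless $\alpha_t\in O_k$. The key claim is that the partial sums $s_t = \sum_{u<t}\delta_u$ take only the values $0$ and $\pm 1$ (never leaving $\{0,1\}$ after a suitable sign normalization), because each time we pass from $\alpha_t$ to $\alpha_{t+1}$, Proposition~\ref{prop:theta}'s level-$k$ and level-$(k{+}1)$ compatibility forces the ``imbalance'' between having $\enhne$ one above $\futne$ versus one below to flip exactly when we enter $O_k$; equivalently $s_{t+1}-s_t=\delta_t\in\{-1,0,+1\}$ and $\delta_t\neq 0$ only when $s$ is currently at a boundary of $\{0,1\}$ forcing it to move back inside. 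Summing $\delta_t$ around the full cycle gives $0$ (it is $a_{2k,2k+1}-a_{2k+1,2k}$ restricted to $O$, and the total over all orbits is $0$ by Theorem~\ref{thm:sym}, but in fact per-orbit it is forced to $0$ by the boundedness), which is only consistent with strict alternation of the nonzero $\delta_t$. I expect the main obstacle to be precisely this last step: extracting \emph{per-orbit} alternation rather than merely the \emph{global} equality $a_{2k,2k+1}=a_{2k+1,2k}$. The clean way around it is to prove directly that between two consecutive visits of the orbit to $A_{2k,2k+1}$ there must be a visit to $A_{2k+1,2k}$, by arguing that $\theta$ cannot take an element of $A_{2k,2k+1}$ back into $A_{2k,2k+1}$ without passing through $A_{2k+1,2k}$ — this follows because $\theta$ restricted to $A_{2k,2k+1}$ lands in $\{\futne = 2k{-}1\text{ or }2k\}\setminus\{\enhne=2k{-}1\text{ or }2k\}$, i.e. in involutions with $\futne\leq 2k$ and $\enhne\geq 2k+1$, which (by the step-function description) are exactly $A_{2k+1,2k}$ together with higher $A_{i,j}$, $i\geq 2k+2$; iterating $\theta$ from such a state, the first return to level exactly $k$ (in the $(i,j)\in\{2k,2k+1\}^2$ sense) must, again by Proposition~\ref{prop:theta}, occur in $A_{2k+1,2k}$. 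Making that "first return" claim rigorous — tracking $\ell(\theta^{(t)}(\alpha))$ and showing it decreases monotonically through the intermediate steps until it hits $k$ — is the technical heart, and I would handle it by induction on $\ell$ using Proposition~\ref{prop:theta} one level at a time.
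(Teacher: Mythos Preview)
Your proposal does not correctly carry out any of the approaches it sketches. The concrete error is in your final ``direct'' argument: you claim that $\theta$ sends $A_{2k,2k+1}$ into involutions with $\futne\leq 2k$ \emph{and} $\enhne\geq 2k+1$. The first inequality is right (Proposition~\ref{prop:theta} applied to $\enhne(\alpha)=2k$ gives $\futne(\theta(\alpha))\in\{2k-1,2k\}$), but nothing in Proposition~\ref{prop:theta} controls $\enhne(\theta(\alpha))$ from the data of $\alpha$; in fact $\theta(\alpha)$ may perfectly well lie in $A_{2k,2k}$ or $A_{2k-1,2k-1}$. Your picture of the orbit climbing to larger $\ell$ and then monotonically descending back to level $k$ is therefore backwards: after leaving $A_{2k,2k+1}$ the orbit stays at $\enhne\leq 2k$ until the first time it crosses upward.

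The paper's argument tracks exactly this crossing. Set $f(\gamma)=\mathbf 1[\enhne(\gamma)>2k]$; Proposition~\ref{prop:theta} says $f(\gamma)=\mathbf 1[\futne(\theta(\gamma))>2k]$. For $\alpha\in A_{2k,2k+1}$ we have $f(\alpha)=0$, but $\futne(\alpha)=2k+1$ forces $f$ of the $\theta$-predecessor of $\alpha$ to equal $1$; hence along the finite cyclic orbit there is a smallest $m\geq 1$ with $f(\theta^{(m)}(\alpha))=1$. Then $f(\theta^{(m-1)}(\alpha))=0$ gives $\futne(\theta^{(m)}(\alpha))\leq 2k$, and together with $\enhne(\theta^{(m)}(\alpha))>2k$ and $|\enhne-\futne|\leq 1$ this pins $\theta^{(m)}(\alpha)\in A_{2k+1,2k}$; for $1\leq\ell<m$ both statistics are $\leq 2k$, so $\theta^{(\ell)}(\alpha)$ avoids $A_{2k,2k+1}\cup A_{2k+1,2k}$. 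Your partial-sum idea can in fact be salvaged along the same lines --- one checks directly that $\delta_t=f(\alpha_{t-1})-f(\alpha_t)$, so the partial sums telescope and take only two consecutive values, forcing alternation --- but you did not find this identity, and without it your boundedness claim is unsupported.
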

An example of this correspondence is illustrated in
Figure~\ref{fig:schema}.

\begin{figure}
\begin{center}
\includegraphics[width=0.8\textwidth]{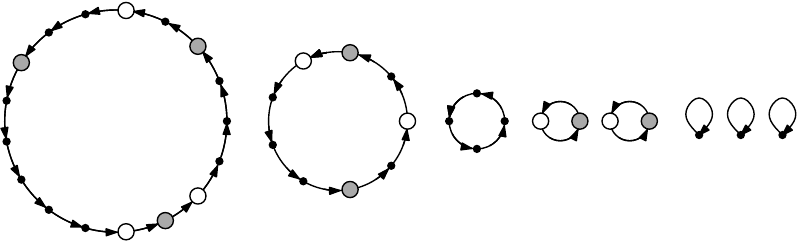}
\end{center}
\caption{Schematic representation of typical orbits under $\theta$.
  The white circles represent the elements of $A_{2k,2k+1}$, the gray
  circles the elements of  $A_{2k+1,2k}$ and the small points are
  the remaining elements.}
\label{fig:schema}
\mbox{}\\
{\color{gray!50} \hrule}
\end{figure}

\begin{proof} Consider $i > 0$ such that
  $\theta^{(i)}(\alpha) = \alpha$ (such an $i$ exists as
  $\theta$ acts bijectively on the finite set of involutions of a
  fixed length). Since $\futne(\alpha)= 2k+1$, we
  have $\enhne(\theta^{(i-1)}(\alpha))>2k$.

Let $m$ denote the smallest $j > 0$ such $\enhne(\theta^{(j)}(\alpha))>2k$. We have then $\enhne(\theta^{(m-1)}(\alpha))\leq 2k$. 
Using the properties of $\theta$ we know that $\futne(\theta^{(m)}(\alpha))\leq 2k$, hence $\enhne(\theta^{(m)}(\alpha))\leq 2k+1$. As  $\enhne(\theta^{(m)}(\alpha))>2k$, we must have $\enhne(\theta^{(m)}(\alpha)) = 2k+1$ and $\futne(\theta^{(m)}(\alpha)) = 2k$.

We have just showed that for $\alpha$ and $\theta^{(i)}(\alpha)$
belonging to $A_{2k,2k+1}$, there exists $m \in \{1,\dots,i-1\}$ such
that $\theta^{(m)}(\alpha) \in A_{2k+1,2k}$. The proof is over if we
manage to show the statement concerning $\alpha'$. This can be done either by using the same reasoning or by an
argument of cardinality (with Theorem~\ref{thm:sym}).
\end{proof}

The previous lemma sets out how to build the desired
involution. Essentially, from an involution of $A_{2k+1,2k}$, we
iterate $\theta$ until obtaining an involution of $A_{2k,2k+1}$. If,
on the other hand, the involution belongs to $A_{2k,2k+1}$ we want to
go backward, so we iterate $\theta^{-1}$ until obtaining an involution
of $A_{2k+1,2k}$. If an involution does not fall under one of the
previous forms, it necessarily belongs to a set of the form
$A_{\ell,\ell}$, and we can then set this involution as a fixed point.

\begin{prop} Let $\alpha$ be an involution and $A_{i,j}$ be the set of
  involutions $\alpha$ such that $\enhne(\alpha) = i$ and
  $\futne(\alpha) = j$. If $\alpha \in A_{2k,2k+1}$, set $m_\alpha$ as
  the smallest integer $m$ such that
  $\theta^{(m)}(\alpha) \in A_{2k+1,2k}$. If $\alpha \in A_{2k+1,2k}$,
  set $m_\alpha$ as the opposite of smallest integer $m$ such that
  $\theta^{(-m)}(\alpha) \in A_{2k,2k+1}$. Otherwise, set $m_\alpha$
  as $0$.

  The mapping $\alpha \mapsto \theta^{(m_\alpha)}(\alpha)$ is an
  involution on the class of involutions that exchanges the statistics
  $\enhne$ and $\futne$. It preserves the size of the involutions, the
  number of fixed points and the opener/closer sequence\footnote{if we
    consider isolated points as left endpoints of arcs}.
\end{prop}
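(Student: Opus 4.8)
The plan is to verify, in order, that the map $\Theta\colon\alpha\mapsto\theta^{(m_\alpha)}(\alpha)$ is well defined, that $\Theta\circ\Theta=\mathrm{id}$, and that $\Theta$ exchanges $\enhne$ with $\futne$; the preservation of length, number of fixed points and opener/closer sequence will then follow immediately. Essentially all the work is bookkeeping on $\theta$-orbits: the input is the previous lemma (the elements of $A_{2k,2k+1}\cup A_{2k+1,2k}$ alternate between the two halves along any $\theta$-orbit) together with Proposition~\ref{prop:theta}.

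First I would record the dichotomy already isolated in the proof of Theorem~\ref{thm:sym}: $a_{i,j}=0$ unless $(i,j)$ is of the form $(2\ell,2\ell)$, $(2\ell,2\ell+1)$, $(2\ell+1,2\ell)$, or $(2\ell+1,2\ell+1)$. Hence every involution lies in exactly one of $A_{\ell,\ell}$, $A_{2k,2k+1}$, $A_{2k+1,2k}$, so the case split defining $m_\alpha$ is exhaustive and unambiguous. Well-definedness then reduces to existence of the relevant least exponent: for $\alpha\in A_{2k,2k+1}$ this is exactly the previous lemma, and for $\alpha\in A_{2k+1,2k}$ it holds because the $\theta$-orbit of $\alpha$ is a finite cycle, the forward version of the lemma supplies some $m'$ with $\theta^{(m')}(\alpha)\in A_{2k,2k+1}$, so some least positive backward exponent with that property exists as well. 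When $\alpha\in A_{\ell,\ell}$ nothing is required: $m_\alpha=0$.

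Next I would establish the involution property. Take $\alpha\in A_{2k,2k+1}$ and set $\beta=\Theta(\alpha)=\theta^{(m_\alpha)}(\alpha)\in A_{2k+1,2k}$. By minimality of $m_\alpha$, none of $\theta^{(1)}(\alpha),\dots,\theta^{(m_\alpha-1)}(\alpha)$ meets $A_{2k,2k+1}\cup A_{2k+1,2k}$, so $\alpha$ and $\beta$ are consecutive members of this set along the orbit, and by the alternation lemma they lie in different halves, consistent with $\alpha\in A_{2k,2k+1}$ and $\beta\in A_{2k+1,2k}$. Running the orbit backward from $\beta$, the first member of $A_{2k,2k+1}\cup A_{2k+1,2k}$ encountered is therefore $\alpha$, at distance $m_\alpha$, and it lies in $A_{2k,2k+1}$; hence $m_\beta=-m_\alpha$ and $\Theta(\beta)=\theta^{(-m_\alpha)}\bigl(\theta^{(m_\alpha)}(\alpha)\bigr)=\alpha$. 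The case $\alpha\in A_{2k+1,2k}$ is the mirror image, with forward and backward iteration interchanged, and $\Theta$ fixes every $\alpha\in A_{\ell,\ell}$; so $\Theta\circ\Theta=\mathrm{id}$. For the statistics: if $\alpha\in A_{2k,2k+1}$ then $\Theta(\alpha)\in A_{2k+1,2k}$, giving $\enhne(\Theta(\alpha))=2k+1=\futne(\alpha)$ and $\futne(\Theta(\alpha))=2k=\enhne(\alpha)$; symmetrically when $\alpha\in A_{2k+1,2k}$; and on $A_{\ell,\ell}$ the two statistics already coincide while $\Theta$ acts trivially. Finally, since $\theta$ preserves length, number of fixed points and opener/closer sequence (Proposition~\ref{prop:theta}), so does each power $\theta^{(m)}$, hence so does $\Theta$.

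The one genuinely delicate point — the step I would double-check most carefully — is the orbit bookkeeping in the previous paragraph: that iterating $\theta^{-1}$ from $\beta$ returns to $\alpha$ at exactly distance $m_\alpha$, landing in $A_{2k,2k+1}$ rather than in $A_{2k+1,2k}$. This is precisely where the alternation lemma is used, and it must be invoked with the correct orientation so that no member of $A_{2k,2k+1}\cup A_{2k+1,2k}$ is skipped on the way back; all the remaining verifications are formal.
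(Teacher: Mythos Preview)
Your proposal is correct and follows exactly the route the paper intends: the paper states this proposition without a formal proof, relying on the preceding alternation lemma and the informal paragraph (``Essentially, from an involution of $A_{2k+1,2k}$ \ldots'') as justification, and your argument is precisely the bookkeeping that fleshes this out. In particular, your identification of the delicate point --- that the $m_\alpha$ defined in the proposition coincides with the ``first hit'' $m$ of the lemma, so that no intermediate iterate lies in $A_{2k,2k+1}\cup A_{2k+1,2k}$ --- is the only nontrivial verification, and you handle it correctly by invoking alternation along the finite cyclic orbit in both directions.
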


\noindent \textbf{Remark.} What about the \textit{open crossing level} of
an involution, that is to say the maximum number of dots contained
in a~$k$-crossing, when this involution is transformed into an open matching? It is easy to see that the open crossing level shares
the same distribution as the open nesting level or the enhanced nesting
level (in particular via the bijection $\phi^{-1} \circ \tau \circ
\phi$). However, this statistic does not have a symmetric
distribution, whether it is with the open nesting level or with the
enhanced nesting level.

\subsection{Generating function expressions}
\label{sec:sytbh-egf}
One consequence of Theorem~\ref{thm:SYT} is a collection of new generating
function expressions for standard Young tableaux of bounded
height. They come from an application of 
enumeration results of Weyl chamber walks~\cite{GeZe92,GrMa93}.

\subsubsection{A Determinant Expression}
The generating functions for Young tableaux of bounded height and Weyl
chamber walks can both be expressed in terms of Bessel functions. We
denote by \[b_j(x)=I_j(2x)=\sum_n\frac{(2x)^{2n+j}}{n!(n+j)!},\] the
\emph{hyperbolic Bessel function of the first kind of order~$j$}.

Let $\widetilde{Y}_{k}(t)$ be the exponential generating function for
the class of standard Young tableaux with height bounded by
$k$. Formulas for $\widetilde{Y}_k(t)$ follow from works of Gordon,
Houten, Bender and Knuth~\cite{Gord71, GoHo68, BeKn72}, which depend
on the parity of~$k$. We are only interested in the even values here.
\begin{theorem}[\cite{Gord71, GoHo68, BeKn72}]
The exponential generating function for the class of standard Young
tableaux of height bounded by~$2k$ is given by
\[
\widetilde{Y}_{2k}(t)= \det[b_{i-j}(t)+b_{i+j-1}(t)]_{1\leq i,j\leq k}.
\]
\end{theorem}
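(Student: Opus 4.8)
The plan is to express $\widetilde Y_{2k}(t)$ as a generating function for walks in a Weyl chamber and then enumerate those walks by the reflection principle --- the same tool that governs the Weyl chamber walks appearing elsewhere in this paper. Summing Theorem~\ref{thm:SYT} over all $m\ge 0$ and applying the equivalence (3)$\Leftrightarrow$(4) of Proposition~\ref{prop:om}, $\widetilde Y_{2k}(t)$ is the exponential generating function, by length, of $W_k$-oscillating walks that start at the origin and end on the ray $\{(m,0,\dots,0):m\ge 0\}$. Here $W_k=\{x_1\ge x_2\ge\cdots\ge x_k\ge 0\}$ is a closed fundamental chamber for the hyperoctahedral group $B_k$ of signed permutations, and the step set $\{\pm e_1,\dots,\pm e_k\}$ is $B_k$-invariant and crosses no wall in a single step, so the reflection principle applies. (A more classical alternative, in the vein of~\cite{Gord71,GoHo68,BeKn72}, is to sum the Aitken--Frobenius determinant $f^{\lambda}\,t^{|\lambda|}/|\lambda|!=\det\big[\,t^{\lambda_i-i+j}/(\lambda_i-i+j)!\,\big]_{1\le i,j\le 2k}$ over all partitions $\lambda$ with at most $2k$ parts; I sketch that below as well.)

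For the first route I would invoke the Gessel--Zeilberger reflection principle in the quantitative form of~\cite{GeZe92,GrMa93}: the number of length-$n$ walks confined to $W_k$ from $u$ to $v$ equals $\sum_{w\in B_k}\operatorname{sgn}(w)\,N_n(u,w\cdot v)$, with $N_n$ counting free walks. Passing to exponential generating functions in $n$, the one-dimensional building block is $b_{|b-a|}(t)$ (a $\pm1$ walk of length $n$ from $a$ to $b$ contributes $\binom{n}{(n+b-a)/2}$, whose EGF is $b_{|b-a|}(t)$), so a free walk in $\mathbb Z^k$ from $u$ to $v$ has EGF $\prod_{i=1}^{k} b_{|v_i-u_i|}(t)$. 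After the usual shift by $\rho$ one has to carry out the alternating $B_k$-sum and then sum over the free endpoint $v=(m,0,\dots,0)+\rho$, $m\ge 0$; the endpoint sum produces partial sums of Bessel functions, and the combined sum collapses to the $k\times k$ Toeplitz-plus-Hankel determinant $\det[b_{i-j}(t)+b_{i+j-1}(t)]_{1\le i,j\le k}$. Such formulas are tabulated in~\cite{GrMa93} for the various (chamber, step set) pairs, and the one needed here is the entry for $B_k$ with steps $\pm e_i$ and the endpoint free on the $x_1$-axis.

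For the second route, substituting $\beta_i=\lambda_i-i$ reduces the problem to evaluating $\sum \det[h_j(\beta_i)]_{1\le i,j\le 2k}$ with $h_j(\beta)=t^{\beta+j}/(\beta+j)!$, the sum ranging over all strictly decreasing $\beta_1>\cdots>\beta_{2k}$ in $\mathbb Z$ (the constraint $\beta_{2k}\ge -2k$ is automatic, since otherwise the bottom row of the determinant vanishes). De Bruijn's summation formula rewrites this as the Pfaffian of a $2k\times 2k$ skew-symmetric matrix whose $(i,j)$-entry depends only on $i-j$ and equals, up to sign, the partial sum $b_0(t)+2b_1(t)+\cdots+2b_{r-1}(t)+b_r(t)$ with $r=|i-j|$. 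Routine row and column operations peel off these partial sums, and a standard Pfaffian identity --- the one rewriting a skew-Toeplitz Pfaffian of size $2k$ as a Toeplitz-plus-Hankel determinant of size $k$, the same identity underlying Weyl's $B/C/D$ character formulas --- then yields $\det[b_{i-j}(t)+b_{i+j-1}(t)]_{1\le i,j\le k}$.

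The main obstacle, common to both routes, is getting the bookkeeping exactly right so as to land on this ``plus'' determinant rather than on one of its close relatives: $\det[b_{i-j}(t)-b_{i+j}(t)]_{1\le i,j\le k}$ is the determinant counting oscillating tableaux of height at most $k$ that return to $\varnothing$ (for $k=1$ it is $b_0(t)-b_2(t)=b_1(t)/t$, the Catalan exponential generating function, by the recurrence $I_{\nu-1}(z)-I_{\nu+1}(z)=\tfrac{2\nu}{z}\,I_\nu(z)$), whereas ending at a row shifts the Hankel index to $i+j-1$ and flips the sign. Disentangling these requires care with the choice of shift $\rho$ (type $B_k$ rather than type $C_k$) and with the orientation of the free endpoint. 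As a first sanity check I would verify $k=1$ directly, where both sides equal $b_0(t)+b_1(t)=\sum_{n\ge 0}\binom{n}{\lfloor n/2\rfloor}\frac{t^n}{n!}$, which is indeed the exponential generating function of standard Young tableaux of height at most $2$.
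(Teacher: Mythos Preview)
The paper does not prove this theorem. It is quoted verbatim from the classical literature (Gordon, Gordon--Houten, Bender--Knuth) and used as input: the paper's contribution in Section~\ref{sec:sytbh-egf} is to derive, via Grabiner--Magyar and the bijection of Theorem~\ref{thm:SYT}, a \emph{different} determinant-type expression for $\widetilde{O}_k(t)$, and then to observe that the bijection forces $\widetilde{O}_k(t)=\widetilde{Y}_{2k}(t)$. So there is no ``paper's own proof'' to compare against.

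That said, your two sketches are worth commenting on. Your second route (hook-length determinant, sum over $\lambda$ with at most $2k$ parts, de~Bruijn's Pfaffian formula, then the Pfaffian-to-Toeplitz-plus-Hankel identity) is exactly the classical argument of the cited papers, and is the right way to prove the statement independently.

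Your first route, by contrast, is essentially the paper's Section~\ref{sec:sytbh-egf} computation run in the opposite direction, and here you should be more cautious. You assert that summing the Grabiner--Magyar determinant over the free endpoint on the $x_1$-axis ``collapses'' to $\det[b_{i-j}+b_{i+j-1}]$; but when the paper actually performs this sum (the Proposition immediately following the Grabiner--Magyar theorem), it arrives at the visibly different expression
\[
\widetilde{O}_k(t)=\sum_{u=0}^{k-1}(-1)^u\sum_{\ell=u}^{2k-1-2u}b_\ell\,\det(b_{i-j}-b_{k-i-j})_{\substack{0\le i\le k-1,\ i\ne u\\ 1\le j\le k-1}},
\]
and does \emph{not} claim this equals the Gordon determinant directly---only that equality follows from the bijection. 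So the ``collapse'' you invoke is a nontrivial determinant identity that neither you nor the paper establishes; your reference to \cite{GrMa93} for ``the endpoint free on the $x_1$-axis'' is also optimistic, since Grabiner--Magyar tabulate fixed-endpoint formulas, not axis-summed ones. If you want a self-contained proof along this first route, that identity is the missing piece.
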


Around the same time, Grabiner-Magyar~\cite{GrMa93} determined a formula for the
exponential generating function of
the $W_k$-oscillating walks of length~$n$
between two given points.
Throughout this section, we translate the region $W_k$ to apply the previous results. Namely, we use
\[
W_k= \{(x_1, \dots, x_k)\in\mathbb{Z}^k: x_1 > \cdots > x_k >0 \}.
\]
\begin{theorem}[Grabiner-Magyar~\cite{GrMa93}] For fixed~${\lambda},
  {\mu} \in W_k$, the exponential
  generating function $\widetilde O_{{\lambda}, {\mu}}(t)$ of the $W_k$-oscillating walks
from $\lambda$ to $\mu$, counted by their lengths,  satisfies
\[
\widetilde  O_{{\lambda}, {\mu}}(t) 
= \det \left( b_{{\mu_i}-{\lambda_j}}(2t)-b_{{\mu_i}+{\lambda_j}}(2t)\right)_{1\leq i, j\leq k}.
\]
\end{theorem}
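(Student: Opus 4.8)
The plan is to reprove this result of Grabiner and Magyar via the reflection principle of Gessel and Zeilberger for lattice walks confined to a Weyl chamber. First I would dispose of the \emph{unconstrained} count. A walk in $\mathbb{Z}^k$ with steps $\pm e_i$ is the same data as a shuffle of $k$ one-dimensional $\pm 1$-walks: simply record, for each step, which coordinate it moves. Since the exponential generating function of a labelled shuffle is the product of the exponential generating functions of its factors, the EGF of unconstrained walks from $\lambda$ to $\mu$ is $\prod_{i=1}^{k}\beta(\mu_i-\lambda_i;t)$, where
\[
\beta(j;t)=\sum_{m\ge 0}\frac{t^{2m+|j|}}{m!\,(m+|j|)!}
\]
is the EGF of one-dimensional $\pm 1$-walks with net displacement $j$ (the count of such walks of length $n$ being $\binom{n}{(n+|j|)/2}$). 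After the rescaling of $t$ used in the statement, $\beta(j;t)$ is exactly $b_j$, and $\beta(-j;t)=\beta(j;t)$ since $I_{-j}=I_j$ for integer $j$.

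Next I would invoke the reflection principle. The region $W_k=\{x_1>\dots>x_k>0\}$ is the open Weyl chamber of type $C_k$: its walls are the hyperplanes $x_i=x_{i+1}$ for $1\le i\le k-1$ and $x_k=0$, and the reflections in them generate the hyperoctahedral group $\mathcal H_k$ of signed permutations, of order $2^k k!$, acting on $\mathbb{Z}^k$ by permuting and sign-changing coordinates. Two hypotheses must be verified: the step set $\{\pm e_i\}$ is $\mathcal H_k$-invariant (immediate), and a single unit step cannot pass from the interior of $W_k$ to the far side of a wall without landing on that wall (also immediate, since crossing $x_i=x_{i+1}$ or $x_k=0$ would require changing the relevant difference or coordinate by $2$ in one step). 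As $\lambda,\mu\in W_k$ are strictly interior, hence regular (fixed by no nontrivial element of $\mathcal H_k$), the Gessel--Zeilberger theorem gives, for the number of length-$n$ walks from $\lambda$ to $\mu$ staying strictly inside $W_k$,
\[
O_{\lambda,\mu}(n)=\sum_{w\in\mathcal H_k}\sgn(w)\,u_n(\lambda,w\mu),
\]
where $u_n$ counts unconstrained walks and $\sgn(w)=\det w=\pm 1$.

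Finally I would assemble the determinant. Passing to EGFs and inserting the factorization from the first step,
\[
\widetilde O_{\lambda,\mu}(t)=\sum_{w\in\mathcal H_k}\sgn(w)\prod_{i=1}^{k}\beta\bigl((w\mu)_i-\lambda_i;t\bigr).
\]
Writing $w$ as a permutation $\pi\in S_k$ together with a sign vector $\varepsilon\in\{\pm 1\}^k$, so that $(w\mu)_i=\varepsilon_i\mu_{\pi(i)}$ and $\sgn(w)=\sgn(\pi)\prod_i\varepsilon_i$, the sum over the $\varepsilon_i$ factors through the product and turns the $i$-th factor into $\beta(\mu_{\pi(i)}-\lambda_i;t)-\beta(\mu_{\pi(i)}+\lambda_i;t)$, using $\beta(-j;t)=\beta(j;t)$. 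Hence
\[
\widetilde O_{\lambda,\mu}(t)=\sum_{\pi\in S_k}\sgn(\pi)\prod_{i=1}^{k}\Bigl(\beta(\mu_{\pi(i)}-\lambda_i;t)-\beta(\mu_{\pi(i)}+\lambda_i;t)\Bigr),
\]
which is the Leibniz expansion of $\det\bigl(\beta(\mu_j-\lambda_i;t)-\beta(\mu_j+\lambda_i;t)\bigr)_{1\le i,j\le k}$; transposing the matrix and identifying $\beta$ with $b$ via the scaling in the statement yields $\det\bigl(b_{\mu_i-\lambda_j}(2t)-b_{\mu_i+\lambda_j}(2t)\bigr)_{1\le i,j\le k}$, as claimed. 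I expect the only real work to be the bookkeeping around the reflection principle rather than any deep difficulty: confirming that $\{\pm e_i\}$ is simultaneously compatible with \emph{all} the walls (the type-$C_k$ structure, including the reflecting wall $x_k=0$) and that no step skips a wall, and then keeping the signs straight when decomposing a signed permutation into its underlying permutation and sign vector. Pinning down the precise Bessel argument ($2t$ versus $t$, and the internal normalization of $I_j$) is a purely mechanical final check.
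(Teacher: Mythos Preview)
The paper does not prove this theorem; it is quoted as a result of Grabiner and Magyar and used as input for the subsequent generating-function manipulations. Your argument via the Gessel--Zeilberger reflection principle for the type-$C_k$ chamber is correct and is essentially the original proof: the step set $\{\pm e_i\}$ is invariant under the hyperoctahedral group, unit steps cannot skip the walls $x_i=x_{i+1}$ or $x_k=0$, and the unconstrained EGF factors coordinatewise into one-dimensional Bessel factors, after which the signed sum over $\mathcal H_k$ collapses to the stated determinant exactly as you describe. The paper in fact relies on the same Gessel--Zeilberger machinery a few lines later (in the diagonal expression for $O_k(t)$), so your approach is fully aligned with its toolkit; the only residual issue is the bookkeeping you already flag, namely matching $\beta(j;t)=I_j(2t)$ against the paper's $b_j$ convention.
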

We specialize the start and end positions as
${\lambda}=\delta=(k, k-1, \ldots, 1)$ and ${\mu}=
\delta+me_1=$ \mbox{$(k+m, k-1, \ldots, 1)$}. We are
interested in the sum over all values of $m$, and 
define  $\widetilde O_k(t)\equiv\sum_{m\geq 0}\widetilde  O_{{\delta}, me_1+\delta}(t)$.
We deduce the following. 
\begin{proposition}
  The exponential generating function for the class of oscillating
  tableaux ending with a row shape is the finite sum
\[
\widetilde{O}_k(t)= \sum_{u=0}^{k-1}(-1)^u \sum_{\ell=u}^{2k-1-2u} b_{\ell} \det (b_{i-j}-b_{kd-i-j})_{0 \leq i \leq k-1, i\neq u, 1\leq j \leq k-1}.
\]
\end{proposition}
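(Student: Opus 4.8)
The plan is to start from the Grabiner--Magyar formula $\widetilde O_k(t)=\sum_{m\ge 0}\det\bigl(b_{\mu_i-\lambda_j}(2t)-b_{\mu_i+\lambda_j}(2t)\bigr)_{1\le i,j\le k}$ with $\lambda=\delta=(k,k-1,\dots,1)$ and $\mu=me_1+\delta=(k+m,k-1,\dots,1)$, and to push the summation over $m$ inside the determinant. Since $\mu$ and $\delta$ differ only in the first coordinate, all rows of the matrix except the first are independent of $m$: row $i\ge 2$ has entries $b_{(k+1-i)-(k+1-j)}(2t)-b_{(k+1-i)+(k+1-j)}(2t)=b_{j-i}(2t)-b_{2k+2-i-j}(2t)$. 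Expanding the determinant along the first row, $\widetilde O_k(t)=\sum_{j=1}^k (-1)^{1+j}\,M_{1j}\cdot\sum_{m\ge 0}\bigl(b_{k+m-(k+1-j)}(2t)-b_{k+m+(k+1-j)}(2t)\bigr)$, where $M_{1j}$ is the $(1,j)$ minor, which does not depend on $m$. Re-indexing the first-row sum with $\ell=m+j-1$ turns $\sum_{m\ge 0}b_{m+j-1}$ into $\sum_{\ell\ge j-1}b_\ell$ and $\sum_{m\ge 0}b_{m+2k+1-j}$ into $\sum_{\ell\ge 2k+1-j}b_\ell$; subtracting, the telescoping leaves the finite block $\sum_{\ell=j-1}^{2k-j}b_\ell$. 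After shifting indices so that rows and columns run over $0\le i\le k-1$ (writing $b_{i-j}-b_{kd-i-j}$ with $d$ a placeholder for the constant $2$, matching the paper's display) and setting $u=j-1$, the outer sum becomes $\sum_{u=0}^{k-1}(-1)^u\Bigl(\sum_{\ell=u}^{2k-1-2u}b_\ell\Bigr)\det(b_{i-j}-b_{\cdots})_{0\le i\le k-1,\ i\ne u,\ 1\le j\le k-1}$, which is the claimed formula.

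The main steps in order are: (i) invoke Proposition~\ref{prop:om} to identify the EGF of oscillating tableaux ending in a row shape with $\widetilde O_k(t)=\sum_{m\ge 0}\widetilde O_{\delta,me_1+\delta}(t)$ in the shifted region $W_k=\{x_1>\cdots>x_k>0\}$ (the shift $\lambda\mapsto\lambda+\delta$ is exactly the translation flagged in the paper's remark, and it does not change the step set); (ii) substitute the Grabiner--Magyar determinant; (iii) expand along the first row and factor out the $m$-independent minors; (iv) interchange the (absolutely convergent, for $t$ in a neighborhood of $0$) sum over $m$ with the finite sum over $j$ and carry out the telescoping re-indexing described above; (v) relabel indices to match the stated display.

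The main obstacle I expect is purely bookkeeping: getting the index ranges and signs exactly right when the determinant is expanded and re-indexed — in particular keeping track of how deleting row $u$ from a $k\times k$ matrix and shifting to $0$-based indices produces the stated minor $\det(b_{i-j}-b_{kd-i-j})_{0\le i\le k-1,\,i\ne u,\,1\le j\le k-1}$, and verifying that the surviving finite Bessel block is precisely $\sum_{\ell=u}^{2k-1-2u}b_\ell$ rather than an off-by-one variant. One should also check that the telescoping cancellation is valid term-by-term (it is, since each $b_\ell(2t)$ is an entire function and for fixed $t$ the tails $\sum_{\ell\ge N}b_\ell(2t)$ converge, so the difference of the two tail sums is the stated finite sum). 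Everything else — linearity of the determinant in its first row, Fubini for the double sum near $t=0$ — is routine.
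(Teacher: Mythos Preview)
Your approach is exactly the one the paper sketches: apply Grabiner--Magyar, expand by cofactors along the $m$-dependent row, and telescope the resulting infinite Bessel sums (the paper's only extra remark is that the identity $b_{-\ell}=b_\ell$ is used, which in your setup enters when simplifying the minors rather than the telescoped row).

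One bookkeeping point worth flagging: your own telescoping yields $\sum_{\ell=j-1}^{2k-j}b_\ell$, which under $u=j-1$ has upper limit $2k-1-u$, not the $2k-1-2u$ printed in the paper's display. If you test $k=2$ against the explicit value $\widetilde O_2=b_0^2+b_0b_1+b_0b_3-2b_1b_2-b_2^2-b_1^2+b_1b_3$ quoted just below the proposition, you will find that $2k-1-u$ reproduces it while $2k-1-2u$ does not; so trust your computation over the displayed index (and similarly treat the printed ``$kd$'' as $2k$).
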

This follows from the fact that the infinite sum which arises from
direct application of Grabiner and Magyar's formula telescopes after
applying the identity $b_{-k}=b_k$. The proof is technical, and largely an
exercise in tracking indices after applying co-factor expansion of the
determinants.

The bijection between the classes implies
$\widetilde{O}_k(t)=\widetilde{Y}_{2k}(t)$. Here are the first two
values, which are well known:
\[
\widetilde{O}_1(t)= \widetilde{Y}_2(t)= b_0+b_1, \quad \widetilde{O}_2(t) = \widetilde{Y}_4(t)=b_0^2+b_0b_1+b_0b_3-2b_1b_2-b_2^2-b_1^2+b_1b_3.
\]

\subsubsection{A Diagonal Expression}
\label{sec:diag}
\mbox{} Standard Young tableaux can also
be viewed as oscillating tableaux with no deleting steps. (The entries
tell you which box was added at a given time.) This gives us an
interesting correspondence between two lattice path classes. 
\begin{theorem}
  The set of oscillating lattice walks of length~$n$ in~$W_k$ starting
  at $\delta=(k, k-1, \dots, 1)$ and ending at the boundary at the
  boundary $\{me_1+\delta: m\geq 0\}$ is in bijection with the set of
  oscillating lattice walks of length $n$ in $W_{2k}$, using only
  positive steps ($e_j$), starting at $\delta$ and ending anywhere in
  the region.
\end{theorem}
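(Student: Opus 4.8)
The plan is to obtain this bijection by composing three correspondences that are already available: Proposition~\ref{prop:om} on the left-hand side, the classical growth-sequence description of standard Young tableaux on the right-hand side, and Theorem~\ref{thm:SYT} (summed over $m$) in the middle.

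First I would identify the left-hand class with oscillating tableaux. Subtracting $\delta=(k,k-1,\dots,1)$ — the translation already announced in the Remark following the definition of the lattice-walk classes — turns an oscillating lattice walk of length $n$ in the cone $W_k=\{x_1>\dots>x_k>0\}$ from $\delta$ to $me_1+\delta$ into a $W_k$-oscillating walk of length $n$ (now in the unshifted region $\{x_1\geq\dots\geq x_k\geq 0\}$) from the origin to $(m,0,\dots,0)$. By Proposition~\ref{prop:om}, equivalence $(3)\Leftrightarrow(4)$, these walks are in bijection with oscillating tableaux of length $n$ of height bounded by $k$ ending in the row shape $(m)$. Taking the union over all $m\geq 0$, the left-hand class of the theorem is in bijection with the set of all oscillating tableaux of size $n$, height bounded by $k$, ending in a row shape.

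Next I would identify the right-hand class with standard Young tableaux. A length-$n$ walk in $W_{2k}=\{x_1>\dots>x_{2k}>0\}$ that starts at $\delta=(2k,2k-1,\dots,1)$ and uses only the steps $e_j$ becomes, after subtracting $\delta$, a chain $\varnothing=\mu^{(0)}\subsetneq\mu^{(1)}\subsetneq\dots\subsetneq\mu^{(n)}$ of integer partitions with at most $2k$ parts in which each skew shape $\mu^{(i)}/\mu^{(i-1)}$ is a single cell; recording the label $i$ in that cell is the classical bijection between such chains and standard Young tableaux of size $n$ with height bounded by $2k$. Since $\mu^{(n)}$ may be any partition of $n$ with at most $2k$ parts, the endpoint $\delta+\mu^{(n)}$ is unconstrained, which is exactly the meaning of ``ending anywhere in the region.'' So the right-hand class is in bijection with standard Young tableaux of size $n$ of height bounded by $2k$.

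Finally, Theorem~\ref{thm:SYT} provides, for each fixed $m\geq 0$, a bijection between oscillating tableaux of size $n$, height bounded by $k$, ending in the row $(m)$, and standard Young tableaux of size $n$, height bounded by $2k$, with exactly $m$ odd columns. Summing over $m$ — every standard Young tableau has a well-defined number $m\geq 0$ of odd columns, so these classes partition the full set — yields a bijection between oscillating tableaux of size $n$, height $\leq k$, ending in an arbitrary row shape, and all standard Young tableaux of size $n$ with height $\leq 2k$. Composing the three bijections gives the statement. The argument is essentially bookkeeping; the only point that needs care is matching the translated cones (the $W_k$ and $W_{2k}$ here are the $\delta$-shifts of the regions used in Proposition~\ref{prop:om} and in the growth-sequence picture, respectively), together with the observation that restricting an oscillating walk in $W_{2k}$ to positive steps $e_j$ is precisely what makes the corresponding oscillating tableau have ``no deleting steps,'' i.e. be a standard Young tableau.
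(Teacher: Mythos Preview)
Your proof is correct and follows exactly the route the paper takes: the paper states the theorem immediately after remarking that ``standard Young tableaux can also be viewed as oscillating tableaux with no deleting steps,'' so its proof is precisely the composition of Proposition~\ref{prop:om}, Theorem~\ref{thm:SYT} summed over $m$, and the growth-chain encoding of standard Young tableaux --- your write-up just makes each of these three steps explicit where the paper leaves them implicit.
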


We next obtain a new diagonal expression for standard Young tableaux
of bounded height. The expression is also a corollary of the
bijection.  We find the expression via the oscillating walks, and an
application of Gessel and Zeilberger's Weyl chamber reflectable walk
model.  The advantage of these diagonal representations is potential
access to asymptotic enumeration formulas, and possibly alternative
combinatorial representations. All of the generating functions are
D-finite, and we can use the work of~\cite{BoLaSa13} to determine
bounds on the shape of the annihilating differential equation.
\begin{theorem}
\label{thm:osc}
The ordinary generating function for oscillating walks starting at $\delta$ and
ending on the boundary $\{m\,e_1+\delta: m\geq 0\}$, is given by the
 formula
\[ O_k(t)=\Delta \left[ \frac{t^{2k-1}(z_3 z_4^2 \cdots
    z_k^{k-2})(z_1+1)\prod_{1\leq j<i \leq k} (z_i-z_j)(z_iz_j-1)
    \cdot \prod_{2 \leq i \leq k} (z_i^2 -1)}{1-t(z_1\cdots z_k)(z_1 +
    \oz_1 + \cdots z_k + \oz_k)}  \right]. \]
\end{theorem}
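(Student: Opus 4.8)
The plan is to apply the Gessel--Zeilberger reflectable-walk machinery to the $W_{2k}$ oscillating walks that use only positive steps $e_j$, starting at $\delta=(k,k-1,\dots,1)$ and ending anywhere in the region. By the preceding theorem such walks are in bijection with the oscillating walks we actually want to count, so it suffices to produce the diagonal expression for the former. First I would recall that for a reflectable step set the number of walks of length $n$ confined to the Weyl chamber from $\lambda$ to $\mu$ is an alternating sum over the Weyl group $W=B_k$ (signed permutations, since the chamber $x_1>\dots>x_k>0$ is the type-$B$ chamber) of unconstrained walk counts: $\sum_{\sigma\in W}\mathrm{sgn}(\sigma)\,[\,\sigma(\lambda)\to\mu\,]$. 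Summing over all endpoints $\mu$ in the region and then extending the sum of unconstrained counts to all of $\mathbb Z^{2k}$ via the standard ``folding'' argument, the generating function becomes the positive part (in the $z_i$ variables) of a rational kernel.

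Next I would set up the kernel explicitly. The positive-step generating function with a catalytic variable $z_i$ for each coordinate has kernel $1-t(z_1+\dots+z_{2k})$ for unconstrained walks on $\mathbb Z^{2k}$; tracking the starting point $\delta$ contributes the monomial $z^{\delta}=z_1^{2k-1}z_2^{2k-2}\cdots z_{2k}$ up to reindexing, and summing over all endpoints in the cone multiplies by a geometric-type factor. The antisymmetrization over the hyperoctahedral group $B_k$ replaces the naive monomial by the $B_k$-Vandermonde-type alternant $\prod_{i<j}(z_i-z_j)(z_iz_j-1)\cdot\prod_i(z_i-1)$ (or $z_i^2-1$, depending on normalization), which is exactly the numerator skeleton appearing in the claimed formula $\prod_{1\le j<i\le k}(z_i-z_j)(z_iz_j-1)\cdot\prod_{2\le i\le k}(z_i^2-1)$ together with the power $t^{2k-1}$ and the monomial prefactor $z_3z_4^2\cdots z_k^{k-2}(z_1+1)$. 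The substitution that halves the number of variables (pairing $z_i$ with $\oz_i=1/z_i$) turns the step sum $z_1+\dots+z_{2k}$ into $z_1+\oz_1+\dots+z_k+\oz_k$, producing the denominator $1-t(z_1\cdots z_k)(z_1+\oz_1+\dots+z_k+\oz_k)$ displayed in the statement. Extracting the nonnegative part of the $z_i$-expansion of this rational function is, by definition, the diagonal operator $\Delta$, so the generating function $O_k(t)$ equals the stated $\Delta[\cdots]$.

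I would then verify the bookkeeping details: that the shift from the region $x_1\ge\dots\ge x_k\ge 0$ to the strict cone $x_1>\dots>x_k>0$ (noted in the ``Remark'' and done explicitly in Section~\ref{sec:diag}) accounts for the $+\delta$ translation, that the reflection group for this cone is indeed $B_k$ with $2^k k!$ elements and the right sign character, and that the geometric summation over endpoints (which multiplies the kernel numerator by the product of $(1-\text{something})^{-1}$ factors before the positive-part extraction, or equivalently is absorbed into the definition of $\Delta$ as nonnegative-part extraction) reproduces exactly the monomial factor $z_3z_4^2\cdots z_k^{k-2}(z_1+1)$ and the power of $t$. The D-finiteness and the bound on the order of the annihilating ODE follow from \cite{BoLaSa13} applied to this diagonal, as indicated in the surrounding text, and need no separate argument here.

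The main obstacle I expect is the precise matching of the numerator's monomial prefactor and the exact power $t^{2k-1}$: the Gessel--Zeilberger alternant is naturally a Laurent polynomial symmetric-up-to-sign object, and converting it into the ``honest'' polynomial numerator written in the statement requires pulling out a specific monomial in the $z_i$ (coming both from $z^{\delta}$ and from clearing denominators in the $z_i\mapsto z_i,\oz_i$ substitution) and checking that no stray factors survive the positive-part extraction. In other words, the conceptual content (reflection principle $+$ folding $+$ diagonal) is routine, but pinning down that the result is \emph{exactly} the displayed rational function, rather than a Laurent-equivalent one differing by an overall monomial or a factor that the diagonal operator silently discards, is the delicate step; I would handle it by carefully normalizing the $B_k$ alternant at the base point $\delta$ and tracking the lowest-degree monomial through each transformation.
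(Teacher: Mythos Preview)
Your proposal takes an unnecessary detour and, more importantly, the detour does not obviously work. The theorem is about $W_k$-oscillating walks with step set $\{\pm e_1,\dots,\pm e_k\}$ starting at $\delta$ and ending on $\{m e_1+\delta\}$; this is \emph{already} a reflectable model in the type-$B_k$ chamber, and the paper simply applies Gessel--Zeilberger to it directly. The alternant over the hyperoctahedral group at the start point is computed in closed form as
\[
A(\bz)=\sum_{w\in G}(-1)^{l(w)}\bz^{w(\delta)}=\frac{1}{(z_1\cdots z_k)^k}\prod_{1\le j<i\le k}(z_i-z_j)(z_iz_j-1)\prod_{1\le i\le k}(z_i^2-1),
\]
the endpoint sum over the ray $\{m e_1+\delta\}$ gives $B(\bz)=\bigl(z_1^{k-1}z_2^{k-1}z_3^{k-2}\cdots z_k(z_1-1)\bigr)^{-1}$, and the unconstrained-walk kernel with steps $\pm e_i$ is $C(\bz)=\bigl(1-t(z_1\cdots z_k)(z_1+\oz_1+\cdots+z_k+\oz_k)\bigr)^{-1}$. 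The stated formula is then literally $\Delta\bigl(A(\bz)B(\bz)C(\bz)\bigr)$ after multiplying out; the factor $(z_1+1)$ and the missing $(z_1^2-1)$ in the product over $i\ge 2$ come from the cancellation $(z_1^2-1)/(z_1-1)=z_1+1$ between $A$ and $B$, and the monomial $z_3 z_4^2\cdots z_k^{k-2}$ together with $t^{2k-1}$ is what survives once the $(z_1\cdots z_k)^{-k}$ from $A$ and the monomial in $B$ are combined and the diagonal normalization is absorbed.

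You instead invoke the preceding bijection to pass to $W_{2k}$ walks with \emph{only positive} steps, and then propose an unspecified ``folding'' that pairs variables to collapse $z_1+\cdots+z_{2k}$ into $z_1+\oz_1+\cdots+z_k+\oz_k$. Two problems: first, the positive step set $\{e_1,\dots,e_{2k}\}$ is not invariant under the $B_{2k}$ Weyl group (sign changes send $e_i$ to $-e_i$), so you cannot use the $B$-type reflection there; at best you get a type-$A_{2k-1}$ model, whose alternant is the ordinary Vandermonde in $2k$ variables, not the $B_k$ product you want. Second, your ``substitution that halves the number of variables'' is not a standard move and you give no argument for why it should commute with the positive-part extraction or produce the specific numerator in the statement; this is the point where the proof would actually have to happen, and it is left as an assertion. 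Your later remark that ``the reflection group for this cone is indeed $B_k$'' is inconsistent with working in $W_{2k}$. Drop the detour: apply Gessel--Zeilberger directly to the $k$-dimensional model with steps $\pm e_i$, compute $A$, $B$, $C$ as above, and multiply.
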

The proof of Theorem~\ref{thm:osc} is a rather direct application of
Gessel and Zeilberger's formula for reflectable walks in Weyl
chambers (the reader is directed to~\cite{GeZe92,GrMa93} for details).

\begin{proof}
As oscillating tableaux are counted by walks in the region $W_k := \{(x_1, \dots, x_d): x_1 > \dots > x_k > 0 \},$
taking steps
$ \{\pm e_1, \dots, \pm e_k \}, $
and starting at 
$ \delta = (k, k-1, \dots, 1), $
they are a reflectable walk model in the Weyl chamber $B_k$.   This Weyl chamber is generated by the simple roots
\[ \{ e_1-e_2, \dots, e_{k-1}-e_k, e_k\}, \]
and has a Weyl group $G$ of order $2^k \, k!$: the group has the
full action of $\mathfrak{S}_k$ on $\mathbb{R}^k$, along with negating
any subset of coordinates.  Any group element $\sigma$ which acts on
$\delta$ by permuting its entries (keeping all its entries positive)
has its order in the Weyl group the same as its order in
$\mathfrak{S}_k$.  Furthermore, negating $k$ entries of an element
corresponds to an even element of the Weyl group if and only if $k$ is
even.  Thus, if
\[ E(\bz) := (z_1\cdots z_k) \prod_{i<j}(z_i-z_j), \]
then 
\[ \sum_{w \in G} (-1)^{l(w)} \bz^{w(\delta)} = \sum_{I \subset \{1,\dots, k\}} (-1)^{|I|} E(\sigma_I(\bz)), \]
where $\sigma_I$ sends the element $z_j$ to $1/z_j$ if $j \in I$ and fixes it otherwise.  This sum simplifies to
\[ A(\bz) = \sum_{w \in G} (-1)^{l(w)} \bz^{w(\delta)} =
\frac{1}{(z_1\cdots z_k)^k} \prod_{1\leq j<i \leq k}
(z_i-z_j)(z_iz_j-1) \, \prod_{1 \leq i \leq k} (z_i^2 -1), \]
which can be proven by noting that both expressions represent
$(z_1\cdots z_k)^k A(\bz)$ as a polynomial of total degree $k(3k+1)/2$
with the same solutions (and then comparing the leading coefficients
of both expressions).

We are interested in walks that end on the boundary
$M= \{me_1+\delta:m \in \mathbb{N}\}$. The generating function of the
endpoints is 
\[ B(\bz) = \sum_{\mathbf{b} \in M} \bz^{-\mathbf{b}} = \frac{1}{z_1^{k-1} z_2^{k-1} z_3^{k-2} \cdots z_k (z_1-1)} .\]
Finally, the generating function of unrestricted walks starting at the
origin is
\[ C(\bz) = \frac{1}{1-t(z_1\cdots z_k)(z_1 + \oz_1 + \cdots z_k +
  \oz_k)}. \]
So by the classical result of Gessel and Zeilberger, we have the generating function
representation
\[ F(t) := \Delta \left( A(\bz) B(\bz) C(\bz) \right) ,
\]
which is exactly the formula given in the statement of this theorem.  
\end{proof}

\section{Tableau sequences as Baxter classes}
\label{sec:Baxter}
The combinatorial class that came to be known as Baxter permutations
was introduced in 1967 in a paper of Baxter~\cite{Baxter64} studying
compositions of commuting functions. A Baxter permutation of size~$n$
is a permutation~$\sigma\in \mathfrak{S}_n$ such that there are no
indices $i<j<k$ satisfying
$\sigma(j+1)<\sigma(i)<\sigma(k)<\sigma(j)\quad\text{or}\quad\sigma(j)<\sigma(k)<\sigma(i)<\sigma(j+1).
$
We shall denote by $B_n$ the number of Baxter permutations of size $n$
is $B_n$. They constitute entry A001181 of the Online Encyclopedia of
Integer Sequences (OEIS)~\cite{oeis}.

Chung, Graham, Hoggart and Kleiman~\cite{ChGrHoKl78} found the explicit formula 
\begin{equation}
\label{eq:BaxterFormula}
B_n=\sum_{k=1}^n\frac{\binom{n+1}{k-1}\binom{n+1}{k}\binom{n+1}{k+1}}{\binom{n+1}{1}\binom{n+1}{2}}.
\end{equation}

Many combinatorial classes have subsequently been discovered to have
the same counting sequence -- for example triples of lattice
paths~\cite{DuGu98} and plane bipolar orientations~\cite{Baxt01}. A recent
comprehensive survey of Felsner, Fusy, Noy and Orden~\cite{FeFuNoOr11}
finds many structural commonalities among these seemingly diverse
families of objects. Remarkably, there are intuitive bijections
connecting these classes, see for instance~\cite{BoBoFu09}.

The generating function of hesitating tableaux (i) in
Proposition~\ref{prop:ep} was determined by Xin and
Zhang~\cite{XiZh09}. Baxter numbers appear in their Table~3, and they
mention that the equivalence between the two could be proved by
applying creative telescoping to a formula for $B_n$ resembling the
one given in Equation~\eqref{eq:BaxterFormula} above.

Our contribution to this area is an explicit proof of that equivalence,
and an exploration of the connection between these classes and
the other well known Baxter classes. Clearly, the classes of
Proposition~\ref{prop:ep} have combinatorial bijections between them,
but they {\bf do not} share many of the properties of the other known
Baxter classes. However, each of them does have a natural subclass of
objects enumerated by Catalan numbers, as many Baxter families also
do. (For example, non-crossing partitions are counted by Catalan numbers.)

\begin{prop} 
  \label{thm:hes-baxter}
The following classes are in bijection:
\begin{enumerate}
\item[(i)] the set of hesitating tableaux with height bounded by $2$,
  starting with empty diagram, ending in a partition with a single
  part;
\item[(ii)] the set of open partition diagrams of length $n$ with
  no~enhanced $3$-crossing;
\item[(iii)]$W_2$-hesitating walks of length $n$ ending on the $x$-axis;
\item[(iv)]Baxter permutations of size $n+1$. 
\end{enumerate}
\end{prop}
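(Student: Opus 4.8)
The plan is to split the statement into a ``combinatorial part'', the equivalences (i)$\Leftrightarrow$(ii)$\Leftrightarrow$(iii), which is essentially free, and an ``enumerative part'', (iii)$\Leftrightarrow$(iv), which is where the work lies. For the first part: Proposition~\ref{prop:ep} specialized to $k=2$ already gives, for each fixed $m\geq 0$, a bijection between hesitating tableaux of length $2n$ of height bounded by $2$ ending in the row $(m)$, open partition diagrams of length $n$ with no enhanced $3$-crossing and exactly $m$ open arcs, and $W_2$-hesitating walks of length $2n$ ending at $(m,0)$. Taking the disjoint union over all $m\geq 0$ yields precisely the three classes (i), (ii), (iii): ``ending in a partition with a single part'' in (i), ``no enhanced $3$-crossing'' with an unconstrained number of open arcs in (ii), and ``ending on the $x$-axis'' in (iii) are just the three descriptions of that union. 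So those three are in bijection with no further argument. The remaining task is to show that their common cardinality is the Baxter number $B_{n+1}$, which is Theorem~\ref{thm:Baxter}; as noted in the introduction there is no combinatorial bijection available, so I would argue through generating functions.

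For (iii)$\Leftrightarrow$(iv) I would proceed as follows. First translate the cone $W_2=\{x_1\geq x_2\geq 0\}$ to the first quadrant by the coordinate shift used elsewhere in the paper, turning a $W_2$-hesitating walk into a walk in $\mathbb N^2$ whose steps, read two at a time, are: a stay followed by $e_1$ or $e_2$; a $-e_1$ or $-e_2$ followed by a stay; or an $e_i$ followed by a $-e_j$, with the intermediate vertex still required to lie in the quadrant. Next, build the trivariate series $H(t;x,y)=\sum_{n,a,b} h_{n,a,b}\,t^n x^a y^b$, where $h_{n,a,b}$ counts such walks of length $2n$ ending at $(a,b)$, and derive a functional equation for it by appending one step-pair. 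The only new feature relative to the vacillating case treated by Bousquet-M\'elou and Xin~\cite{BoXi05} is the step set, so I would reuse their kernel-method apparatus essentially verbatim, obtaining an algebraic kernel together with boundary unknowns that are eliminated in the usual way. Then I would specialize to extract the series $\sum_n h_n\,t^n$ counting the walks that end on the $x$-axis --- this is exactly the kind of section of $H$ already recorded by Bousquet-M\'elou--Xin and by Xin--Zhang~\cite{XiZh09}. Finally I would match this series against $\sum_n B_{n+1}t^n$: using the Chung--Graham--Hoggart--Kleiman formula~\eqref{eq:BaxterFormula}, the identity $h_n=B_{n+1}$ becomes an identity between two explicit hypergeometric sums in $n$, which I would certify by creative telescoping (Zeilberger's algorithm), which is precisely the route Xin and Zhang indicated.

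I expect the \textbf{main obstacle} to be this last stage: reconciling the walk-enumeration formula produced by the kernel method with the closed form for $B_{n+1}$. The functional-equation setup is routine given~\cite{BoXi05}, but pinning down the correct specialization (``ending on the $x$-axis'' versus ``ending at the origin'', and the index shift between walks of length $2n$ and $B_{n+1}$) and running the telescoping certificate cleanly is where care is needed. A secondary but genuine subtlety is checking that the translation of $W_2$ together with the pairing of steps really identifies $W_2$-hesitating walks with the quarter-plane model analyzed by Bousquet-M\'elou and Xin, so that their formulas apply without modification. As a by-product of the functional equation one reads off a two-variable succession rule for the $h_n$, giving the promised new generating tree for Baxter numbers.
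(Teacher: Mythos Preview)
Your split into the combinatorial part (i)$\Leftrightarrow$(ii)$\Leftrightarrow$(iii) via Proposition~\ref{prop:ep} and the enumerative part (iii)$\Leftrightarrow$(iv) is exactly what the paper does, and your handling of the first part is correct and identical to the paper's.

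For the enumerative part your outline is workable, but it diverges from the paper in two respects, one of which is a genuine gap in your description. First, the gap: $W_2=\{x_1\geq x_2\geq 0\}$ is \emph{not} a translate of the first quadrant, so you cannot ``translate the cone $W_2$ to the first quadrant'' while keeping the hesitating step set unchanged. What the paper (following Bousquet-M\'elou--Xin) actually does is use a reflection identity: with $Q=\{x,y\geq 0\}$ and $\overline{(x,y)}=(y,x)$, one has $w(\lambda,\mu,n)=q(\lambda,\mu,n)-q(\lambda,\overline{\mu},n)$, so the $W_2$-series for walks ending on the $x$-axis is the difference $W(x;t)=H(x;t)-V(x;t)$ of two $Q$-series. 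This is the correct bridge to the quarter-plane model; your ``translation'' would either change the step set or land you in the wrong region.

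Second, on the final identification with $B_{n+1}$: your plan is to run the kernel method from scratch and then certify $h_n=B_{n+1}$ by creative telescoping, which is precisely the route Xin--Zhang suggested. The paper deliberately avoids that. Instead it imports from~\cite{BoXi05} explicit constant-term formulas for $[x^{i+1}]H$ and $[x^{i+1}]V$ in terms of an algebraic series $Y$, sums over $i$, and simplifies using a constant-term identity to express $W(1;t)$ as a short signed sum of functions $A_{\ell,k}(t)$ whose coefficients $a_n(\ell,k,j)$ are given by Lagrange inversion as explicit products of three binomials. The proof is then finished by exhibiting, and checking by hand, three linear relations among the $a_n(\ell,k,j)$ that collapse the sum directly to the Chung--Graham--Hoggatt--Kleiman summand $b_{n+1,j+1}$. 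So where you would invoke an algorithmic certificate, the paper produces human-verifiable binomial identities; both approaches succeed, but the paper's is more explicit and sets up the refined coefficients used later in the conjectural Section~\ref{sec:refinement}.
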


Remark that Theorem~\ref{thm:Baxter} is simply the implication that
(i) and (iv) from Proposition~\ref{thm:hes-baxter} are in
bijection. We prove this with a generating function argument, and
deduce the other bijections using Proposition~\ref{prop:ep}.

\subsection{Proof of Theorem~\ref{thm:Baxter}}
We prove Theorem~\ref{thm:Baxter} and conjecture a stronger
result which could be useful to prove the bijection
combinatorially. This conjecture is partially verified using some of
the intermediary computations, so it is useful to have them made
explicit. We note that this is slightly different from both the proof that
appears in a previous version of this work~\cite{BuMeMi15} and from
the suggested proof of Xin and Zhang~\cite{XiZh09}.

We first set up some notation.  Let
$\overline{x}=\frac{1}{x}$, and consider the ring of formal series
$\mathbb{Q}[x, \overline{x}] [\![t]\!]$.  The operator~$CT_x$ extracts the constant term in $x$
of series of $\mathbb{Q}[x, \overline{x}] [\![t]\!]$. 

We recall the work of Bousquet-M\'elou and Xin~\cite{BoXi05}.  Here,
we only require the $k=2$ case from their work, and have
consequently eliminated some of the subscripts from the statements of their
results. Also, note that their definition of $W_2$ is shifted one unit
to the right, hence in the statement of their results, walks start at
$(1,0)$ rather than $(0,0)$. 

Let $Q$ denote the first quadrant in the plane,
$Q=\{(x, y): x, y \geq 0\}$, and let $W_2$ denote the region
$W_2=\{(x, y): x>y \geq 0\}$. Walks taking $n$ steps that start at
$\lambda$ and end at $\mu$ and remain in $Q$ and $W_2$ are, respectively,
denoted by~$q(\lambda, \mu, n)$ and $w(\lambda, \mu, n)$.

Bousquet-M\'elou and Xin's Proposition 12 in~\cite{BoXi05}, based on a
classic reflection argument, implies the following.  For any starting
and ending points $\lambda$ and $\mu$ in $W_2$, the number of
$W_2$-hesitating walks going from $\lambda$ to $\mu$ can be expressed
in terms of the number of $Q$-hesitating walks:
\[
w(\lambda, \mu, n) = q(\lambda, \mu, n)-q(\lambda, \overline{\mu}, n)
\]
where $\overline{(x,y)}=(y,x)$.  They define a simple sign reversing
involution between pairs of walks; the walks restricted to $W_2$ appear
as fixed points.

We consider the following two generating functions for $Q$-hesitating walks that
start at $(1,0)$ and end on an axis:
\[
H(x;t) = \sum_{ i\geq 1, n\geq 0 } q((1,0), (i,0), 2n) x^i\,t^n\quad \text{and}\quad 
V(y;t) = \sum_{ i\geq 1, n\geq 0 } q((1,0), (0,i), 2n) y^i\,t^n.
\]
By applying the proposition we see immediately that the bivariate generating
function $W(x;t)$ for $W_2$-hesitating walks that start at $(0,1)$ and end on the
$x$- axis satisfies the formula
\begin{equation}
\label{eqn:hv}
W(x;t)=\sum_{i\geq 1, n\geq 0} w_2((1,0), (i,0), 2n)\, x^it^n = H(x;t)-V(x;t).
\end{equation}

Theorem~\ref{thm:Baxter} is equivalent to the statement
\begin{equation}
\label{eq:Baxter}
W(1;t)=\sum B_{n+1} t^n.
\end{equation}

\begin{proof}[Proof of Theorem~\ref{thm:Baxter}]
 For $i\geq 0$, Bousquet-M\'elou and Xin~\cite{BoXi05} show 
 the following
\begin{eqnarray}
[x^{i+1}]H(x;t)&=&\mathrm{CT}_x\frac{Y}{t(1+x)}\bar{x}^{2+i}(x^2-\bar{x}^2Y^2+\bar{x}^3Y)\\ \mbox{}
[x^{i+1}]V(x;t)&=&\mathrm{CT}_x\frac{Y}{t(1+x)}x^{3+i}(x^2-\bar{x}^2Y^2+\bar{x}^3Y).
\end{eqnarray}
Thus, we deduce
\[
[x^{i+1}]W(x;t)=[x^{i+1}]H(x;t)-V(x;t) =\mathrm{CT}_x\frac{Y}{t(1+x)}(\bar{x}^{2+i}-x^{3+i})(x^2-\bar{x}^2Y^2+\bar{x}^3Y).\]
Hence we have $$W(1;t)=T(t)+U(t)+V(t),$$ 
where 
\begin{eqnarray*}
T(t)&=&\mathrm{CT}_x\sum_{i\geq 0}\frac{Y}{t(1+x)}(\bar{x}^i-x^{5+i}),\\
U(t)&=&-\mathrm{CT}_x\sum_{i\geq 0}\frac{Y^3}{t(1+x)}(\bar{x}^{4+i}-x^{1+i}),\\
V(t)&=&\mathrm{CT}_x\sum_{i\geq 0}\frac{Y^2}{t(1+x)}(\bar{x}^{5+i}-x^{i}).
\end{eqnarray*}
Then we use the following identity from~\cite{BoXi05} (valid for $k\geq 1$
and $\ell\in\mathbb{Z}$)
$$\mathrm{CT}_x\frac{Y^k}{t(1+x)}\bar{x}^{\ell}=\mathrm{CT}_x\frac{Y^k}{t(1+x)}x^{\ell-k+1},$$
which gives the following simplifications:
\begin{eqnarray*}
T(t)&=&\mathrm{CT}_x\sum_{i\geq
        0}\frac{Y}{t(1+x)}(x^i-x^{5+i})=\mathrm{CT}_x\frac{Y}{t(1+x)}(1+x+x^2+x^3+x^4),\\
U(t)&=&-\mathrm{CT}_x\sum_{i\geq
        0}\frac{Y^3}{t(1+x)}(x^{2+i}-x^{1+i})=\mathrm{CT}_x\frac{Y^3}{t(1+x)}x,\\
V(t)&=&\mathrm{CT}_x\sum_{i\geq 0}\frac{Y^2}{t(1+x)}(x^{4+i}-x^{i})=\mathrm{CT}_x\frac{Y^2}{t(1+x)}(-1-x-x^2-x^3).
\end{eqnarray*}
Hence, defining
$A_{\ell,k}(t)=\mathrm{CT}_x\frac{Y^k}{t(1+x)}x^{\ell}$, we can
collect terms to obtain
\begin{equation}
W(1;t)=\sum_{r=0}^4A_{r,1}(t)+A_{1,3}(t)-\sum_{r=0}^3A_{r,2}(t).
\label{eq:WasA}
\end{equation}
It is shown in~\cite{BoXi05} that the Lagrange inversion formula yields, for $n\in\mathbb{N}$,
$$
[t^n]A_{\ell,k}(t)=\sum_{j\in\mathbb{Z}}a_n(\ell,k,j),\quad
\text{with}\quad a_n(\ell,k,j)=\frac{k}{n+1}\binom{n+1}{j}\binom{n+1}{j+k}\binom{n}{j-\ell}.
$$
Here we apply the convention $\binom{n}{j}=0$ for $j<0$ or $j>n$.

Next, it is straightforward to detect and check the linear relations (valid for
$n\in\mathbb{N}$ and $j\in\mathbb{Z}$)
$$
a_n(4,1,n-j+2)+a_n(1,3,j-1)-a_n(2,2,n-j+1)-a_n(3,2,j)=0,
$$
$$
a_n(1,1,n-j)+a_n(2,1,j+1)-a_n(0,2,j)=0,
$$
which respectively give $A_{4,1}(t)+A_{1,3}(t)-A_{2,2}(t)-A_{3,2}(t)=0$
and $A_{1,1}(t)+A_{2,1}(t)-A_{0,2}(t)=0$. Remarkably,
expression~\eqref{eq:WasA} for $W(1;t)$ simplifies to
$$
W(t)=A_{0,1}(t)+A_{3,1}(t)-A_{1,2}(t).
$$
For $n\geq 1$, the Baxter number $B_n$ is given by
$B_n=\sum_{j\in\mathbb{Z}}b_{n,j}$, with
$b_{n,j}=\frac{\binom{n+1}{j-1}\binom{n+1}{j}\binom{n+1}{j+1}}{\binom{n+1}{1}\binom{n+1}{2}}$,
and again it is easy to detect and check that (for $n\in\mathbb{N}$
and $j\in\mathbb{Z}$)
$$
a_n(0,1,j)+a_n(3,1,j+1)-a_n(1,2,j)=b_{n+1,j+1},
$$
so that $A_{0,1}(t)+A_{3,1}(t)-A_{1,2}(t)=\sum_{n\geq 0}B_{n+1}t^n$,
and thus $[t^n]W(1;t)=B_{n+1}$.
\end{proof}



\subsection{Consequence: a new generating tree}
\label{sec:GeneratingTree}
A \emph{generating tree\/} for a combinatorial class expresses recursive
structure in a rooted plane tree with labeled nodes. The objects of
size~$n$ are each uniquely generated, and the set of objects of size $n$
comprise the $n$th level of the tree. They are useful for
enumeration, and for showing that two classes are in bijection. Theorem~\ref{thm:Baxter} yields a new generating tree
construction for Baxter objects.

Several different formalisms exist for generating trees,
notably~\cite{BaBoDeFlGaGo02}. The central properties are as
follows. Every object $\gamma$ in a combinatorial class $\mathcal{C}$ is
assigned a label $\ell(\gamma)\in\mathbb{Z}^k$, for some fixed
$k$. There is a rewriting rule on these labels with the property that
if two nodes have the same label then the ordered list of labels of
their children is also the same.  We consider labels that are pairs of
positive integers, specified by~$\{\ell_\text{Root}: [i,j]
\rightarrow \operatorname{Succ}([i,j])\}$, where $\ell_\text{Root}$
is the label of the root.

Two generating trees for Baxter objects are known in the literature,
and one consequence of Theorem~\ref{thm:Baxter} is a third, using the
generating tree for open partitions given by Burrill~\emph{et
  al.}~\cite{Buetal12}. This tree differs from the other two already
at the third level, illustrating a very different decomposition of the
objects.  For the three different systems we give the succession
rules, and the first 5 levels of the tree (unlabelled), in
Figure~\ref{fig:trees}.
\begin{figure}\center
\begin{subfigure}[b]{0.45\textwidth}
                \includegraphics[width=\textwidth, height=2cm]{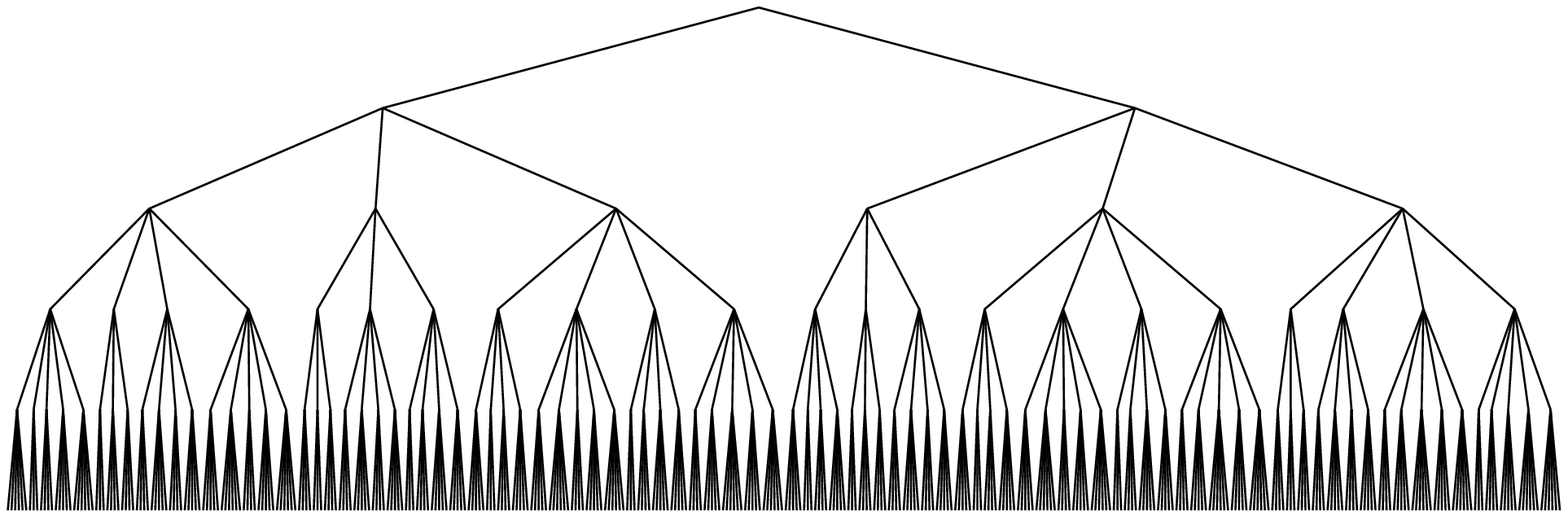}

{\tiny $\{[1,1];[i,j]\rightarrow [1,j+1], \dots, [i, j+1], [i+1, j], \dots [i+1, 1]\}$}
\end{subfigure}\begin{subfigure}[b]{0.45\textwidth}
                \includegraphics[width=\textwidth, height=2cm]{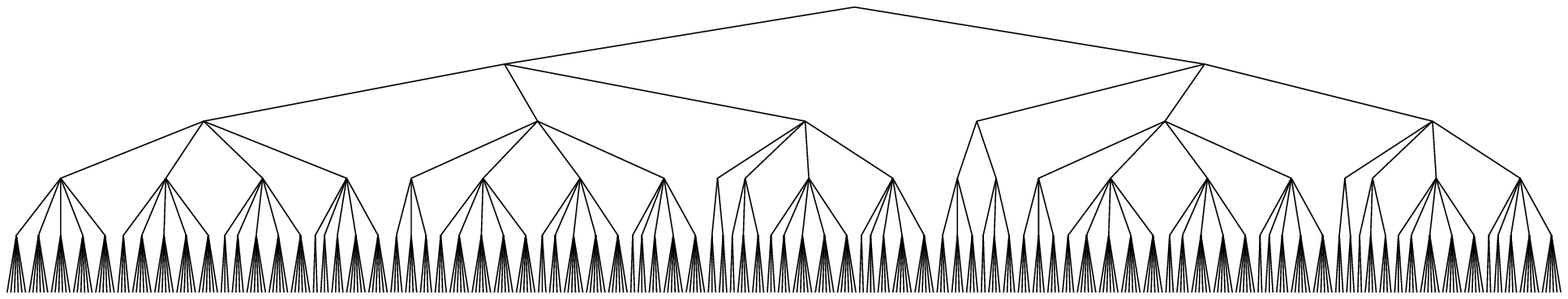}

{\tiny$\{[0,2];
[i,j]\rightarrow [0, j], \dots , [i-1, j], [1, j+1], \dots, [i+j-1, 2]\}$}
\end{subfigure}

\begin{subfigure}[b]{0.45\textwidth}
                \includegraphics[width=\textwidth,height=2cm]{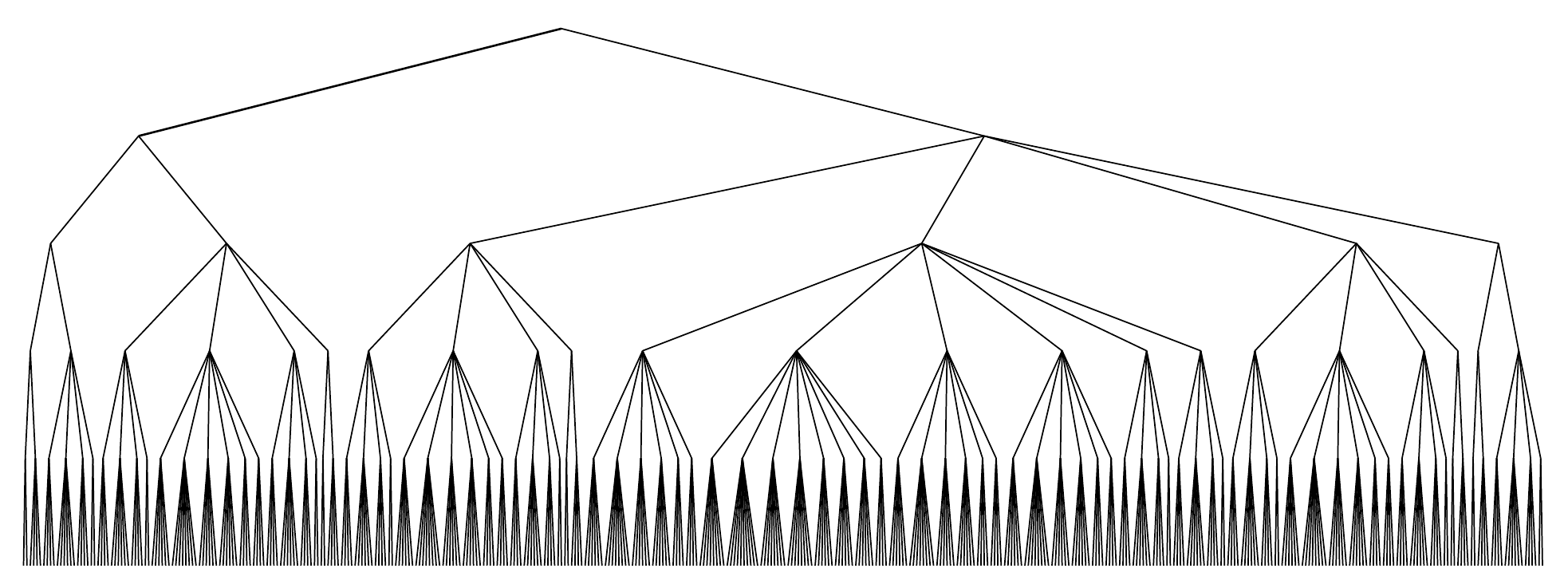}
{\tiny \[
\begin{array}{rll}
\{[0,0];[i,j]\rightarrow
&[i,i], [i +1,j]\\
&[i,j],[i,j+1],\dots,[i,i-1], &\mathrm{if}\, i>0\\
&[i-1,j],[i-1,j+1],\dots,[i-1,i-1], &\mathrm{if}\,i>0 \\ 
&[i , j -1], [i-1, j-1]&\mathrm{if}\, i > 0, \mathrm{and}\,  j > 0\}.\\
\end{array} \]}

\end{subfigure}

 \caption{The first five levels of each of the Baxter generating
   trees. They are respectively from~\cite{BoBoFu09}~\cite{BoGu14}~\cite{Buetal12}. }
\label{fig:trees}

\mbox{}\\
{\color{gray!50} \hrule}
\end{figure}

\subsection{A conjectured refinement}
\label{sec:refinement}

We have proved that the coefficients $a(n,m)$
counting $W_2$-hesitating walks of length $2n$ from $(0,0)$ to $(m,0)$
satisfy $\sum_ma(n,m)=B_{n+1}$, with $B_n$ the $n$th Baxter number. A
bijective proof is yet to be found, and in that perspective a natural
question is whether the parameter $m$ corresponds to a simple
parameter on another Baxter family.

\begin{proposition}
  The family of $Q$-hesitating excursions, that is to say the hesitating walks in the lattice $Q=\{(x, y): x, y \geq 0\}$ starting and ending at the origin, is a Baxter family: the number of such walks of length $2n$ is equal to $B_{n+1}$.
\end{proposition}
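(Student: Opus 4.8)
The plan is to compute the ordinary generating function $E(t):=\sum_{n\ge 0}e_n t^n$, where $e_n$ is the number of $Q$-hesitating excursions of length $2n$, and to verify that $E(t)=\sum_{n\ge 0}B_{n+1}t^n$; concretely, I would aim to show that $E(t)$ equals the series $W(1;t)$ appearing in the proof of Theorem~\ref{thm:Baxter}.

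First I would write down the functional equation. Setting $Q(x,y;t):=\sum_{i,j,n\ge 0}q((0,0),(i,j),2n)\,x^iy^jt^n$ and reading off the three types of hesitating step-pair — each admissible in the quadrant except for the obvious positivity obstructions, and with the two choices $e_i$ then $-e_i$ accounting for the constant $2$ below — one gets
\[
K(x,y;t)\,Q(x,y;t)\;=\;1\;-\;t\,\ox(1+y)\,Q(0,y;t)\;-\;t\,\oy(1+x)\,Q(x,0;t),
\]
with $K(x,y;t)=1-t\bigl(x+\ox+y+\oy+2+x\oy+\ox y\bigr)=1-t(x+1)(1+\oy)-t(1+\ox)(1+y)$. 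The $x\leftrightarrow y$ symmetry of the model gives $Q(0,y;t)=Q(y,0;t)$, so there is a single unknown boundary series $F(x;t):=Q(x,0;t)$, and the quantity we want is $E(t)=F(0;t)$.

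Second, I would solve this by the kernel method. The kernel $K$ is invariant under the group generated by the birational involutions $y\mapsto x/y$ and $x\mapsto y/x$, which one checks is isomorphic to $\mathfrak{S}_3$, with orbit $\{(x,y),(x,x/y),(y/x,y),(1/y,x/y),(y/x,1/x),(1/y,1/x)\}$. Since the naive signed orbit sum of $Q$ does not by itself isolate $F$ (the model is of Kreweras type), I would extract $F$ either via the customary ``half orbit sum'' refinement, in the spirit of Bousquet-M\'elou's treatment of Kreweras-type models, or --- more efficiently --- by recognising $E(t)$ directly inside the system already solved by Bousquet-M\'elou and Xin~\cite{BoXi05} for walks started at $(1,0)$, the natural guess (consistent with small cases) being $E(t)=H(1;t)-V(1;t)=W(1;t)$. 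Either way, the outcome is a closed form for $E(t)$ as a constant term $\mathrm{CT}_x$ of an explicit rational expression in $x$, $t$, and the algebraic series $Y$ of \cite{BoXi05} that already occurs in the proof of Theorem~\ref{thm:Baxter}.

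Third, with that formula in hand, I would expand it by Lagrange inversion exactly as in the proof of Theorem~\ref{thm:Baxter}, reducing $[t^n]E(t)$ to a finite signed sum of the triple-binomial numbers $a_n(\ell,k,j)=\frac{k}{n+1}\binom{n+1}{j}\binom{n+1}{j+k}\binom{n}{j-\ell}$, and then detect and check the linear relations among them that make the sum collapse to $\sum_j b_{n+1,j+1}=B_{n+1}$, with $b_{n,j}$ as in the statement of Theorem~\ref{thm:Baxter}. The main obstacle is the second step: the Kreweras-type group renders the plain orbit sum useless, so one must either carry the half-orbit-sum computation through carefully --- tracking which sections of $Q$ get extracted --- or locate the excursion series inside Bousquet-M\'elou and Xin's equations, which is the route I would take; once a constant-term (or binomial-sum) expression for $E(t)$ is available, matching it against the Baxter formula is intricate but wholly routine, mirroring the last part of the proof of Theorem~\ref{thm:Baxter}.
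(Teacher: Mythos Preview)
Your plan is viable in outline but takes a much heavier route than the paper and leaves the decisive step unresolved. The functional equation and the group you write down are correct. However, the shortcut you prefer---identifying $E(t)$ with $W(1;t)$ and then quoting Theorem~\ref{thm:Baxter}---is not justified: Bousquet-M\'elou and Xin work with $Q$-hesitating walks started at $(1,0)$, not $(0,0)$, so the excursion series $E(t)=Q(0,0;t)$ is not literally present in their formulas, and the equality $E(t)=W(1;t)$ relates two genuinely different counts (excursions in $Q$ versus walks to the axis in $W_2$). You offer only a numerical check, not an argument, so this route is a restatement of the goal rather than a proof. The half-orbit-sum alternative could presumably be pushed through, but you do not execute it, and doing so would amount to redoing the analysis of~\cite{BoXi05} from a different starting point, a computation comparable in length to the whole proof of Theorem~\ref{thm:Baxter}.

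The paper's proof is entirely different and much shorter: it gives a direct bijection between $Q$-hesitating excursions of length $2n$ and the set $\cT_n$ of non-intersecting triples of $N/E$ lattice paths of length $n$ with starting points $(-1,1),(0,0),(1,-1)$ and endpoints at three consecutive points on an antidiagonal---a classical Baxter family~\cite{DuGu98}. One simply records, after each of the $n$ simultaneous steps of the triple, the pair $(i,j)$ of horizontal gaps between consecutive paths. This gap-pair sequence runs from $(0,0)$ to $(0,0)$ inside $Q$, and the eight admissible step-triples in $\{N,E\}^3$ compatible with non-intersection match exactly the eight compound hesitating step-pairs (the two stationary pairs corresponding to ``all $N$'' and ``all $E$''). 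No generating functions are needed.
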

\begin{proof}
We show an easy bijection with the set $\cT_n$ of non-intersecting triples of lattice paths
each of length $n$ with steps either $N=(0,1)$ (north steps) or $E=(1,0)$ (east steps), with respective starting points $(-1,1)$, $(0,0)$,
$(1,-1)$ and respective ending points $(k-1,n-k+1)$, $(k,n-k)$, $(k+1,n-k-1)$ for some 
$k \in \{0,\dots,n\}$. 
For $3$ distinct points $p_1,p_2,p_3$ in $\mathbb{Z}^2$ on a same line of slope $-1$, ordered from top-left to bottom-right, 
define the \emph{distance-pair\/} for $(p_1,p_2,p_3)$
as the pair $(i,j)$ of nonnegative integers such that $x(p_1)=x(p_2)-i-1$ and $x(p_3)=x(p_2)+j+1$. Let $(P_1,P_2,P_3)\in\cT_n$. 
For $r \in \{0,\dots,n\}$ and $i\in\{1,2,3\}$,
let $p_i^{(r)}$ be the point on $P_i$ after $r$ steps, and let $d(r)$ be the distance-pair
for $(p_1^{(r)},p_2^{(r)},p_3^{(r)})$; note that $d(0)=(0,0)$ and $d(n)=(0,0)$ and that $d(r)\in q$ for $0\leq r\leq n$. 
Moreover, for $0<r\leq n$, the vector $\delta(r):=d(r)-d(r-1)$ is in the set $\{(\pm 1,0),(0,\pm 1),(1,-1),(-1,1),(0,0)\}$,
with two possibilities for being $(0,0)$ (whether the $r$th steps in $P_1,P_2,P_3$ are all north or all east). Hence the situation
for the successive distance-pairs $d_0,\ldots,d_n$ is exactly the same as for the successive points of even rank in a $Q$-excursion of length $2n$. 
Figure~\ref{fig:baxterwalks} (left and middle) illustrates this bijection.
\end{proof}

\begin{figure}[h!]
\begin{center}
\includegraphics[width=0.92\textwidth]{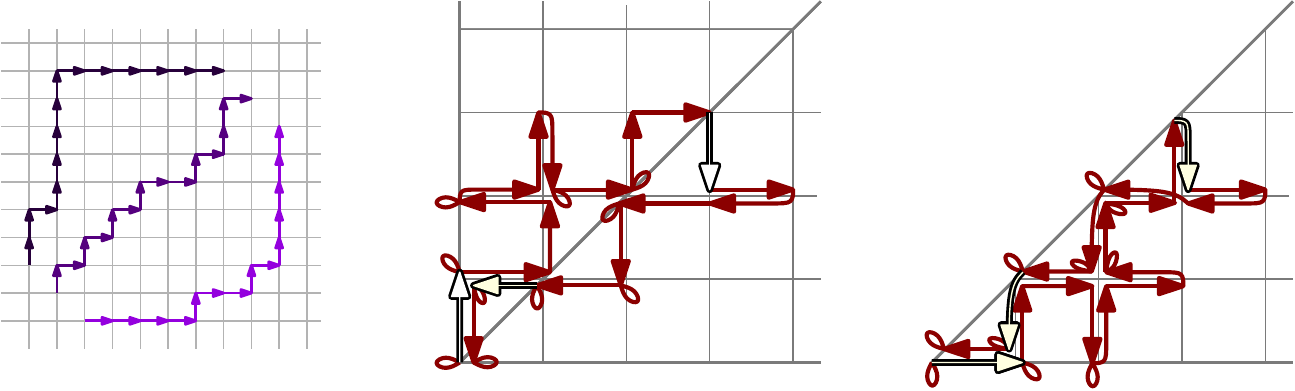}
\end{center}
\caption{\textit{Left.} A non-intersecting triple of lattice paths. \textit{Middle.} A $Q$-hesitating excursion. The stay steps are drawn as loops. The switch-multiplicity of the walk is $3$ (the white arrows indicate the marked steps). \textit{Right.} A $W_2$-hesitating excursion with $3$ marked steps each leaving the diagonal. These three objects are in correspondence.  }
\label{fig:baxterwalks}
\mbox{}\\
{\color{gray!50} \hrule}
\end{figure}

We now define a secondary parameter $m$ for $Q$-excursions. Let $w$
be a $Q$-excursion of length $2n$ where $e_r$ denotes the $r$th
step, for $1\leq r\leq 2n$. Consider, if any, the first step $e_{i_1}$
that visits the region $x<y$. Then consider, if any, the first step
$e_{i_2}$ after $e_{i_1}$ that visits the region $x>y$, and so on
(switching between $x<y$ and $x>y$ each time). We have here a stopping
iterative process yielding, for some $m\geq 0$, $m$ marked steps
$e_{i_1},\ldots,e_{i_m}$ with $i_1<\ldots<i_m$; $m$ is called the
\emph{switch-multiplicity} of the excursion.
For instance, the switch-multiplicity of the excursion at the middle
of Figure~\ref{fig:baxterwalks} is $3$.  Note also that $m\leq n$
since two marked steps cannot be consecutive (the case $m=n$ is
reached by the unique excursion where $i_1=1,i_2=3,i_3=5,\ldots$,
i.e., the excursion that alternates pairs of steps $(0,1),(0,-1)$ with
pairs of steps $(1,0),(-1,0)$).  Denote by $q(n,m)$ the
number of $Q$-hesitating excursions of length $2n$ and
switch-multiplicity $m$, and $a(n,m)$ the
number of $W_2$-hesitating walks of length $2n$ from $(0,0)$ to $(m,0)$.

\begin{conj}
\label{thm:BaxterConjecture}
For $n,m\geq 0$, we have $
q(n,m)=a(n,m).$
\end{conj}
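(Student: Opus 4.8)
The plan is to establish the identity $q(n,m)=a(n,m)$ by obtaining an explicit formula for each side and checking they agree. For the right-hand side, recall that $a(n,m)$ counts $W_2$-hesitating walks of length $2n$ from $(0,0)$ to $(m,0)$; summing the Bousquet-M\'elou–Xin reflection identity $w(\lambda,\mu,n)=q(\lambda,\mu,n)-q(\lambda,\overline\mu,n)$ and running exactly the Lagrange-inversion computation that appears in the proof of Theorem~\ref{thm:Baxter}, but now keeping track of the exponent of $x$ rather than setting $x=1$, yields a closed expression $[t^n x^m]W(x;t)=\sum_{j}\bigl(\text{combination of the }a_n(\ell,k,j)\bigr)$. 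In particular this gives $a(n,m)$ as a signed sum of products of three binomial coefficients. The first step, then, is to carry out this refined extraction and record a clean hypergeometric-type formula for $a(n,m)$.

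For the left-hand side, I would compute $q(n,m)$ via the triple-of-lattice-paths model from the preceding proposition. Under that bijection a $Q$-excursion of length $2n$ corresponds to a non-intersecting triple $(P_1,P_2,P_3)\in\cT_n$ with prescribed start points and ending line $\{x+y=0\}$ shifted appropriately, and the switch-multiplicity $m$ records how many times the middle path's ``distance-pair'' trajectory crosses the diagonal $x=y$ of $Q$. So the task is to count non-intersecting triples refined by this crossing statistic. The natural tool is the Lindström–Gessel–Viennot lemma applied to a \emph{bordered} version of the configuration: mark the $m$ steps $e_{i_1},\dots,e_{i_m}$ where the distance-pair walk leaves the current half-plane, decompose the triple at these marked ranks into $m+1$ non-intersecting pieces, and evaluate each piece by an LGV determinant of binomials. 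Telescoping the resulting product of determinants (using the Cauchy–Binet / Karlin–McGregor composition of LGV configurations) should collapse to a single signed sum of triple binomial products in a variable $j$.

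The main obstacle I expect is matching the two closed forms: both $q(n,m)$ and $a(n,m)$ will come out as alternating sums $\sum_j(\cdots)$ of products of three binomials, but with summation variables and shift parameters that do not visibly coincide, so a nontrivial binomial-coefficient manipulation (a Vandermonde-type convolution, or an application of creative telescoping / the Zeilberger algorithm in the spirit of what Xin and Zhang suggest) will be needed to prove term-by-term, or sum-by-sum, equality. A cleaner alternative worth attempting first is to avoid formulas entirely: construct a direct bijection between marked $Q$-excursions and $W_2$-hesitating walks ending at $(m,0)$ by iterating the same sign-reversing reflection involution of Bousquet-M\'elou–Xin \emph{$m$ times}, once at each switch, so that the $m$ marked steps of the $Q$-excursion become exactly the $m$ net horizontal displacements of the $W_2$-walk; making this iterated reflection well-defined and showing it lands precisely in $W_2$ after the last switch is the delicate point, but if it works it proves the conjecture bijectively and explains the combinatorial meaning of $m$ on both sides. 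Figure~\ref{fig:baxterwalks} (right) already suggests this correspondence, which is why I would pursue the bijective route as the primary strategy and keep the generating-function identity as a fallback verification.
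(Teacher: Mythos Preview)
This statement is labelled a \emph{Conjecture} in the paper, and the paper does not prove it. In fact, the authors explicitly write that ``while we have strong evidence the conjecture is true, we do not even have a proof for $m=1$ (the case $m=0$ is trivial).'' What the paper does is reformulate the conjecture in terms of $W_2$-hesitating excursions with $m$ marked diagonal-leaving steps, strengthen it to Conjecture~\ref{conj:super}, and verify the resulting recurrence numerically for $n\leq 56$. So there is no ``paper's own proof'' for your proposal to be compared against.

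Given that, your plan is not a proof but a sketch of possible attacks on an open problem, and the gaps you yourself flag are precisely where the difficulty lies. Extracting a binomial-sum formula for $a(n,m)=[t^n x^{m+1}]W(x;t)$ from the Bousquet-M\'elou--Xin computation is indeed feasible, since the paper already provides $[x^{i+1}]W(x;t)$ as a constant-term expression before specializing $x=1$. The real obstacle is on the $q(n,m)$ side: the switch-multiplicity is a global, history-dependent statistic on the distance-pair trajectory, and refining an LGV count by such a statistic is not a standard operation. Your proposed decomposition at the marked ranks yields pieces whose endpoints (the diagonal points where the switches occur) are free, so you would be summing products of LGV determinants over all intermediate diagonal positions; there is no a priori reason this telescopes to the same triple-binomial form as $a(n,m)$. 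As for the bijective route, the paper itself notes that the analogous one-dimensional reflection bijection is exactly what motivated the switch-multiplicity parameter, and the authors were ``surprised'' that the obvious two-dimensional lift does not go through---so ``iterating the Bousquet-M\'elou--Xin involution $m$ times'' is the natural first idea, but evidently not one that the authors could make work. Your proposal does not identify any new ingredient that would overcome this.
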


We have thought of the switch-multiplicity as a natural candidate
because of the analogy with a well-known bijection between excursions
of length $2n$ on the line $\mathbb{Z}$ and walks of length $2n$
starting at $0$ on the half-line $\mathbb{Z}_{\geq 0}$ (with steps in
$\pm 1$ for both types of walks), where a similar switch-multiplicity
parameter for excursions (this time switching between
$\mathbb{Z}_{<0}$ and $\mathbb{Z}_{>0}$) corresponds to half the
ending abscissa of positive walks. However, what surprises us is that,
while we have strong evidence the conjecture is true, we do not even
have a proof for $m=1$ (the case $m=0$ is trivial).

Let us now slightly reformulate the conjecture so that we have $W_2$-hesitating walks on both sides. Consider a $Q$-hesitating
excursion $w$ of length $2n$, with $e_{i_1},\ldots,e_{i_m}$ the steps
given by the stopping iterative process
(switching between $x<y$ and $x>y$). Accordingly $w$ splits into a concatenated sequence of $m+1$ parts $\pi_0,\ldots,\pi_{m}$,
where $\pi_0$ is the part before $e_{i_1}$ ($\pi_0=w$ if $m=0$), for $1\leq h<m$, 
$\pi_h$ 
is the part between $e_{i_h}$ (included) and $e_{i_{h+1}}$ (excluded), 
and for $m\geq 1$, $\pi_{m}$
 is the ending part of $W_2$ starting from $e_{i_m}$. Each walk $\pi_i$ starts and ends
on the diagonal $x=y$ and stays in $x\geq y$ for $i$ even and in $x\leq y$ for $i$ odd. Hence, if we reflect each odd walk $\pi_{2i+1}$
according to the diagonal $x=y$,  
we obtain a $W_2$-hesitating walk from $(0,0)$ to $(0,0)$ with $m$ marked steps (the steps at positions $i_1,\ldots,i_m$)
each leaving the diagonal $x=y$. In addition, due to recording the marked steps, there is no loss of information (the original
excursion can be recovered). 
This correspondence is depicted by Figure~\ref{fig:baxterwalks}
(middle and right).  Hence, if we denote by $a(n;i,j,m)$ the number of
$W_2$-hesitating walks of length $2n$ from $(0,0)$ to $(i,j)$ with
$2n$ steps and $m$ marked steps each leaving the diagonal $x=y$,
Conjecture~\ref{thm:BaxterConjecture} can be reformulated as:

\begin{conj}[Reformulation]
For $n,m\geq 0$, we have
$$
a(n;m,0,0)=a(n;0,0,m).
$$
\end{conj}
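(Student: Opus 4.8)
The claimed identity is $a(n;m,0,0)=a(n;0,0,m)$, and both sides count $W_2$-hesitating walks of length $2n$. Since a marked step leaves the diagonal $x=y$ it starts on the diagonal, so between two consecutive marked steps the walk necessarily returns to the diagonal; consequently \emph{any} $m$-element subset of the diagonal-leaving steps of a walk is an admissible marking, and $a(n;i,j,m)=\sum_{W}\binom{D(W)}{m}$, where $W$ runs over $W_2$-hesitating walks of length $2n$ from $(0,0)$ to $(i,j)$ and $D(W)$ is the number of steps of $W$ that leave the diagonal. In particular $a(n;m,0,0)=a(n,m)$ is the number of such walks ending at $(m,0)$ (these satisfy $\sum_m a(n,m)=B_{n+1}$ by Theorem~\ref{thm:Baxter}), while $a(n;0,0,m)=\sum_{W}\binom{D(W)}{m}$ over excursions. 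Collecting everything into one series, the statement becomes the functional identity
\[
R(s,0,0;t)=R(0,0,s;t),\qquad R(x,y,u;t):=\sum_{W}x^{i}y^{j}(1+u)^{D(W)}t^{n},
\]
where the sum is over $W_2$-hesitating walks $W$ of half-length $n$ from $(0,0)$ to $(i,j)$.

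The first step is to write a functional equation for $R$, building the walk step-pair by step-pair exactly as in the $k=2$ analysis of Bousquet-M\'elou and Xin~\cite{BoXi05}, but with the catalytic variable $u$ inserted: every step that carries the walk from a point with $x=y$ to a point with $x>y$ (an $e_1$ step, or a $-e_2$ step, taken from the diagonal) contributes an extra factor $1+u$. The second step is to run the obstinate kernel method and orbit-sum manipulation of~\cite{BoXi05} on this equation to obtain an explicit constant-term (equivalently, diagonal) representation of $R$. The third step is to specialize this representation at $y=0$ and at $x=0$, obtaining two one-variable series, and to prove them equal; following the template of our proof of Theorem~\ref{thm:Baxter}, I expect this comparison to reduce, after Lagrange inversion, to a short list of hypergeometric-term identities in $n$ and one summation index, each of which can be guessed and then certified by creative telescoping.

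A complementary and arguably more satisfying route is bijective. Since both sides live in the single class of $W_2$-hesitating walks, one looks for a length-preserving bijection trading ``final abscissa $m$'' for ``a set of $m$ marked diagonal-leaving steps''. The guiding example is the classical one-dimensional bijection recalled above, between $\mathbb{Z}$-excursions of switch-multiplicity $m$ and non-negative walks of the same length ending at height $2m$; it has a last-passage (or cycle-lemma) description, and one would try to lift such a decomposition to $W_2$, cutting the walk at well-chosen returns to the diagonal. Alternatively, one could transport both sides to a known Baxter family---for instance the non-intersecting triples of lattice paths $\cT_n$ introduced just above---keeping track of $m$ on each side, and try to match the two statistics by composing with one of the existing Baxter bijections (cf.~\cite{FeFuNoOr11,BoBoFu09}).

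\textbf{The main obstacle.} Everything hinges on the refinement by $u$. The marked steps are a boundary statistic for the diagonal wall of $W_2$, and weighting boundary excursions typically destroys the structure that makes the kernel method work, so the $u$-refined functional equation may well resist the standard machinery---consistent with the fact, noted above, that even the case $m=1$ (that is, $\sum_W D(W)=a(n,1)$) is open. On the bijective side the obstruction is the asymmetry flagged in the introduction: the one-dimensional model works because a line has two genuine sides, whereas $x=y$ is merely the bounding hyperplane of the half-space $W_2$, so the one-dimensional recipe does not obviously lift and a genuinely new decomposition of $W_2$-hesitating walks seems to be needed.
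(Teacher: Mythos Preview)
This statement is a \emph{conjecture}, not a theorem: the paper does not prove it and says so explicitly (``we do not even have a proof for $m=1$''). So there is no proof in the paper to compare yours against, and what you have written is not a proof either---it is a research outline with two candidate strategies and an honest section on why each may fail.

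Your reading of the objects is correct. In particular, the paper's recurrence makes clear that the marking in $a(n;i,j,m)$ is an arbitrary $m$-subset of the diagonal-contact steps, so indeed $a(n;i,j,m)=\sum_{W}\binom{D(W)}{m}$, and hence $a(n;m,0,0)=a(n,m)$ while $a(n;0,0,m)=\sum_{W}\binom{D(W)}{m}$ over $W_2$-excursions; this also matches $q(n,m)$ via the reflection in the paragraph preceding the conjecture. Your reformulation $R(s,0,0;t)=R(0,0,s;t)$ is a clean way to package the statement.

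But neither route you sketch closes the gap. For the kernel-method route, the obstacle you name is exactly the point: the weight $(1+u)$ attaches to steps that \emph{start on the boundary} $x=y$, so the extra variable $u$ couples to the unknown boundary series that the kernel method is already trying to eliminate, and the orbit-sum of~\cite{BoXi05} no longer telescopes cleanly; you have not indicated any mechanism to get past this, and ``I expect this comparison to reduce \ldots'' is not a proof step. For the bijective route, you correctly observe that the one-dimensional switch/last-passage argument does not lift because $x=y$ is a wall of $W_2$ rather than a separating hyperplane, and you offer no substitute decomposition. In short, your proposal identifies the same difficulties the authors did and does not overcome them; it should be read as motivation and heuristics toward the conjecture, not as a proof.
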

Actually, an even stronger symmetry seems to hold:

\begin{conjecture}
\label{conj:super}For $n,i,j\geq 0$, we have
$$
a(n;i,0,j)=a(n;j,0,i).
$$   
\end{conjecture}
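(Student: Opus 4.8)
The plan is to reduce Conjecture~\ref{conj:super} to a symmetry statement about hesitating walks in the quadrant, and then to attack that statement with the constant-term/Lagrange-inversion machinery already used in the proof of Theorem~\ref{thm:Baxter}.

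First I would make the reduction explicit. Applying the unfolding described just before Conjecture~\ref{conj:super} (reflect across the diagonal the pieces of a marked $W_2$-hesitating walk that lie between consecutive marked steps) one identifies $a(n;i,0,j)$ with the number of $Q$-hesitating walks of length $2n$ from the origin to $(i,0)$ when $j$ is even, resp. to $(0,i)$ when $j$ is odd, that perform exactly $j$ switches between the half-planes $\{x\ge y\}$ and $\{x\le y\}$ and whose first switch goes from $\{x\ge y\}$ into $\{x<y\}$. Here a switch is a strict sign change of the auxiliary $1$D walk $r\mapsto x_r-y_r$, which takes steps in $\{0,\pm1\}$; the switch decomposition of a $Q$-walk is unique, so this is an honest enumeration. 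Under this reduction Conjecture~\ref{conj:super} becomes: over $Q$-hesitating walks of fixed length started at the origin, the signed distance of the endpoint from the diagonal and the number of diagonal switches have a symmetric joint distribution (with the parity bookkeeping above). Note that the special case $j=0$ of Conjecture~\ref{conj:super} is exactly the reformulation of Conjecture~\ref{thm:BaxterConjecture}, whose case $m=1$ is already open, so any honest proof must in particular crack that case.

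I would then introduce a catalytic variable $s$ recording the number of switches. Decomposing a $Q$-hesitating walk along its switches into maximal one-sided pieces $\rho_0\rho_1\cdots\rho_k$, where each $\rho_h$ with $h\ge1$ begins with a diagonal-crossing step and each $\rho_h$ with $h<k$ returns to the diagonal, gives a functional equation expressing the $s$-refined series in terms of the pieces $\rho_h$, which are themselves hesitating walks confined to a half-plane and are therefore controlled by the series $H$ and $V$ of Bousquet-M\'elou and Xin~\cite{BoXi05} recalled above. Equivalently, one can run the reflection argument that confines a walk to $\{x\ge y\}$ but attach a factor $s$ (rather than the usual $-1$) to each reflected copy, obtaining a closed expression for the switch-refined series as a sum over a dihedral reflection group of shifted quadrant-walk series. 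From there, exactly as in the proof of Theorem~\ref{thm:Baxter}, coefficients are extracted first as constant terms in a Laurent variable $x$ and then, via Lagrange inversion, as finite sums of products of binomial coefficients $a_n(\ell,k,\cdot)$. The conjecture would follow from a term-by-term identity of the resulting hypergeometric summands that is symmetric in the two indices $i$ and $j$ -- the two-parameter analogue of the collapse $a_n(4,1,\cdot)+a_n(1,3,\cdot)-a_n(2,2,\cdot)-a_n(3,2,\cdot)=0$ exploited there.

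The main obstacle, I expect, is at the very first step where the reflection principle is applied: the switch count and the ``first switch goes down'' condition are global features, and reflecting across $x=y$ interchanges the two half-planes, so one must keep careful track of which reflected copies of a quadrant walk contribute and with which power of $s$ -- a naive symmetrization over the reflection group will overcount or mix up the parity classes. The second, and probably deeper, difficulty is that even with a clean constant-term formula in hand the symmetry in $(i,j)$ is in no way manifest: with two free indices the governing binomial identities are a genuine puzzle, and the fact that the one-parameter case $m=1$ of Conjecture~\ref{thm:BaxterConjecture} resists shows that no routine telescoping is available. A complementary, purely bijective route -- building, in the spirit of the map $\theta$ of Section~\ref{sec:YTBH}, a simple non-symmetric transformation of these walks whose $\theta$-style orbits pair up the class with endpoint deviation $i$ and $j$ switches against the class with deviation $j$ and $i$ switches -- would be the natural fallback, but producing the elementary transformation to iterate is itself the heart of the matter.
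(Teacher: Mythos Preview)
The statement you are addressing is Conjecture~\ref{conj:super}, which is \emph{open} in the paper: the authors do not prove it but only verify it numerically for all $n\le 56$ via the recurrence they derive for $a(n;i,j,m)$. There is therefore no proof in the paper to compare against.

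Your proposal is not a proof either, and to your credit you say so. You outline a plan --- unfold to $Q$-hesitating walks, refine by a catalytic variable $s$ counting switches, extract coefficients by constant terms and Lagrange inversion as in the proof of Theorem~\ref{thm:Baxter}, and hope for a symmetric binomial identity in the two indices --- and then you identify the two real obstacles: keeping the reflection bookkeeping straight once the switch parameter is present, and establishing a two-parameter hypergeometric identity whose one-parameter specialisation (the case $m=1$ of Conjecture~\ref{thm:BaxterConjecture}) the paper already flags as open. These are genuine gaps, not technicalities: the paper explicitly says ``we do not even have a proof for $m=1$,'' so your programme cannot close without a new idea at precisely the place you mark as problematic. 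The bijective fallback you sketch at the end is likewise, as you concede, the heart of the matter rather than a solution.

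In short: there is nothing to compare your attempt against, the statement is a conjecture, and what you have written is an honest research plan with its obstructions named, not a proof.
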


Note that there is clearly a one-to-one correspondence between steps
leaving the diagonal $x=y$ and steps ending at the diagonal $x=y$.
Hence $a(n;i,j,m)$ is also the number of $W_2$-hesitating walks of
length $2n$ from $(0,0)$ to $(i,j)$ with $2n$ steps and $m$ marked
steps each ending at the diagonal $x=y$. In that form it is easy to
obtain a recurrence for the coefficients $a(n;i,j,m)$ by considering
the effect of adding the last two steps (note that each of the two
last steps has to be unmarked if empty or not ending at $x=y$, and
might be either unmarked or marked if non-empty and ending at
$x=1$). Denote by $\cS$ the set of steps $\{(\pm 1,0), (0,\pm 1),
(-1,1), (1,-1)\}$ together with the two stationary steps $s_1$ and
$s_2$, where $s_1$ simulates taking the pair $(1,0)$ and $(-1,0)$ as a
single step, and $s_2$ simulates taking the pair $(0,1)$ and $(0,-1)$
as a single step.  We have the following recurrence (with $\delta$ the
Kronecker symbol), from which we have been able to check that
Conjecture~\ref{conj:super} holds for all $n, i, j$ with $n\leq
56$:
\begin{itemize}
\item for $n=0$,
\begin{eqnarray*}
a(n;i,j,m)&=&1\ \ \mathrm{if}\ i=j=m=0,\\
a(n;i,j,m)&=&0\ \ \mathrm{otherwise},
\end{eqnarray*}
\item
for $n>0$, 
\begin{eqnarray*}
a(n;i,j,m)&=&0\ \ \ \ \mathrm{for}\ \ (i,j,m)\notin\cD:=\{0\leq j\leq i,\ 0\leq m\leq n\},\\
a(n;i,j,m)&=&\delta_{i=j}\cdot\sum_{s\in\cS\backslash s_1}a(n-1;i-x(s),j-y(s),m)\\
&&+\delta_{i>j}\cdot\sum_{s\in\cS}a(n-1;i-x(s),j-y(s),m)\\
&&+\delta_{i=j}\cdot\sum_{s\in\cS\backslash s_1}a(n-1;i-x(s),j-y(s),m-1)\\
&&+\delta_{i=j+1}\cdot a(n-1;i,j,m-1)\ \ \ \ \mathrm{for}\ (i,j,m)\in\cD.\\
\end{eqnarray*}
\end{itemize}

\section{Conclusion}
\label{sec:Conclusion}
We conclude with a few thoughts on future directions. Starting with
the correspondences described here, we can easily give an expression
for the generating function of standard Young tableaux of bounded
height as a diagonal of a rational function. The original proofs of
the D-finiteness of these generating functions (for arbitrary $k$) of
Gessel~\cite{Gess90} used a diagonal type operation over series with
infinite variable sets. We are interested in the conjecture of
Christol which states that globally bounded D-finite series can always
be expressed as a diagonal of a rational function. Perhaps the key to
understanding how to find such diagonal expressions lurks in Weyl
chamber walk representations of combinatorial objects, since they
easily yield diagonal expressions from the machinery of Gessel and
Zeilberger.

Baxter numbers generalize, in some sense, Catalan numbers.  Both are
ubiquitous combinatorial sequences, and both are related to hesitating
walk families. Are hesitating walks in higher dimensions similarly
common?
\section*{Acknowledgements}
We are extremely grateful to Sylvie Corteel, Lily Yen, Yvan le Borgne,
Sergi Elizalde, and Guillaume Chapuy for stimulating conversations,
and important insights. JC is supported by the ANR \textit{GRAAl},
ANR-14-CE25-0014-02, and by the PIMS postdoctoral fellowship
grant. The work of \'EF was partly supported by the ANR grant
\textit{Cartaplus} 12-JS02-001-01 and the ANR grant \textit{EGOS}
12-JS02-002-01.  SM is supported by an NSERC Alexander Graham Bell
Canada Graduate Scholarship. The work of MM is partially supported by
an NSERC Discovery Grant.

\small
\bibliographystyle{plain}
\bibliography{main} 

\end{document}